\DeclareMathAlphabet{\mathpgoth}{OT1}{pgoth}{m}{n}
\pgfplotsset{compat=newest}
\definecolor{darkbrown}{HTML}{996633}
\newcommand{\logLogSlopeTriangle}[5]
{

    \pgfplotsextra
    {
        \pgfkeysgetvalue{/pgfplots/xmin}{\xmin}
        \pgfkeysgetvalue{/pgfplots/xmax}{\xmax}
        \pgfkeysgetvalue{/pgfplots/ymin}{\ymin}
        \pgfkeysgetvalue{/pgfplots/ymax}{\ymax}

        \pgfmathsetmacro{\xArel}{#1}
        \pgfmathsetmacro{\yArel}{#3}
        \pgfmathsetmacro{\xBrel}{#1-#2}
        \pgfmathsetmacro{\yBrel}{\yArel}
        \pgfmathsetmacro{\xCrel}{\xArel}

        \pgfmathsetmacro{\lnxB}{\xmin*(1-(#1-#2))+\xmax*(#1-#2)} 
        \pgfmathsetmacro{\lnxA}{\xmin*(1-#1)+\xmax*#1} 
        \pgfmathsetmacro{\lnyA}{\ymin*(1-#3)+\ymax*#3} 
        \pgfmathsetmacro{\lnyC}{\lnyA+#4*(\lnxA-\lnxB)}
        \pgfmathsetmacro{\yCrel}{\lnyC-\ymin)/(\ymax-\ymin)} 

        \coordinate (A) at (rel axis cs:\xArel,\yArel);
        \coordinate (B) at (rel axis cs:\xBrel,\yBrel);
        \coordinate (C) at (rel axis cs:\xCrel,\yCrel);

        \draw[black]   (A)-- node[pos=0.5,anchor=north] {\scriptsize{1}}
                    (B)-- 
                    (C)-- node[pos=0.,anchor=west] {\scriptsize{\color{#5}#4}} 
                    (A);
    }
}
\newtheorem{theorem}{Theorem}
\newtheorem{proposition}[theorem]{Proposition}
\newtheorem{lemma}[theorem]{Lemma}
\newtheorem{corollary}[theorem]{Corollary}
\theoremstyle{remark}
\newtheorem{remark}[theorem]{Remark}
\theoremstyle{definition}
\newtheorem{assumption}{Assumption}
\newtheorem{example}[theorem]{Example}
\DeclareMathOperator{\card}{card}
\newcommand{\email}[1]{\href{mailto:#1}{#1}}
\title{A Hybrid High-Order method for creeping flows of non-Newtonian fluids}
\author[2]{Michele Botti\footnote{\email{michele.botti@polimi.it}}}
\author[1]{Daniel Castanon Quiroz \footnote{\email{danielcq.mathematics@gmail.com}}}	
\author[1]{Daniele A. Di Pietro \footnote{\email{daniele.di-pietro@umontpellier.fr}}}
\author[1]{Andr\'{e} Harnist \footnote{\email{andre.harnist@umontpellier.fr}, corresponding author}}
\affil[1]{IMAG, Univ Montpellier, CNRS, Montpellier, France}
\affil[2]{MOX, Department of Mathematics, Politecnico di Milano, Milano, Italy}
\def\b{\boldsymbol}
\newcommand*\cst[1]{\mathrm{#1}}
\newcommand{\R}{\mathbb{R}} 
\newcommand{\N}{\mathbb{N}} 
\newcommand{\Poly}{\mathbb{P}} 
\newcommand{\M}[1]{\R^{#1 \times #1}} 
\newcommand{\Ms}[1]{\R^{#1 \times #1}_\cst{s}} 
\DeclareMathOperator{\Kernel}{Ker} 
\DeclareMathOperator{\Image}{Im} 
\newcommand\bdU[2]{{\bu U}_{#1}^{#2}} 
\newcommand\dP[2]{P_{#1}^{#2}} 
\def\und{\underline} 
\newcommand\bu[1]{\und{\b{#1}}} 
\newcommand{\T}{\mathcal{T}} 
\newcommand{\F}{\mathcal{F}} 
\newcommand{\Fb}{\F_h^\cst{b}} 
\newcommand{\Fi}{\F_h^\cst{i}} 
\newcommand{\res}[1]{\ \!\!_{|_{#1}}} 
\newcommand{\dgrads}[2]{\b{\cst{G}}^{#1}_{\cst{s},#2}} 
\newcommand{\bdrec}[2]{\b{\cst{r}}^{#1}_{#2}} 
\newcommand{\bdfbres}[2]{\b{\Delta}^{#1}_{#2}} 
\newcommand{\ddiv}[2]{\cst{D}^{#1}_{#2}} 
\newcommand{\GRAD}{\b\nabla} 
\newcommand{\brkGRAD}{\b\nabla_h} 
\newcommand{\GRADs}{\b{\nabla}_\cst{s}} 
\newcommand{\GRADss}{\b{\nabla}_\cst{ss}} 
\newcommand{\tsprod}[2]{#1 \otimes #2} 
\newcommand{\brkGRADs}{\b{\nabla}_{\cst{s},h}} 
\renewcommand{\div}{\b\nabla{\cdot}} 
\newcommand{\DIV}{\b\nabla{\cdot}} 
\newcommand{\brkDIV}{\b\nabla_h \cdot} 
\newcommand\bI[2]{\bu{I}_{#1}^{#2}} 
\newcommand\Iav[2]{\b{I}_{\cst{av},#1}^{#2}} 
\newcommand\proj[2]{\pi_{#1}^{#2}} 
\newcommand\PROJ[2]{\b{\pi}_{#1}^{#2}} 
\newcommand\stress{\b\sigma} 
\newcommand\sob{r}
\newcommand\sobs{\tilde{r}}
\newcommand{\ud}{\,\mathrm{d}}
\begin{document}

\maketitle

\begin{abstract}
  In this paper, we design and analyze a Hybrid High-Order discretization method for the steady motion of non-Newtonian, incompressible fluids in the Stokes approximation of small velocities.
  The proposed method has several appealing features including the support of general meshes and high-order, unconditional inf-sup stability, and orders of convergence that match those obtained for scalar Leray--Lions problems.
  A complete well-posedness and convergence analysis of the method is carried out under new, general assumptions on the strain rate-shear stress law, which encompass several common examples such as the power-law and Carreau--Yasuda models.
  Numerical examples complete the exposition.
  \medskip\\
  \textbf{Keywords:} Hybrid High-Order methods, non-Newtonian fluids, power-law, Carreau--Yasuda law, discrete Korn inequality
  \smallskip\\
  \textbf{MSC2010 classification:} 65N08, 65N30, 65N12, 35Q30, 76D05
\end{abstract}



\section{Introduction}\label{sec:introduction}

In this paper, we design and analyze a Hybrid High-Order (HHO) discretization method for the steady motion of a non-Newtonian, incompressible fluid in the Stokes approximation of small velocities. Notable applications include ice sheet dynamics \cite{Isaac.Stadler.ea:15}, mantle convection \cite{Schubert.Turcotte.ea:01}, chemical engineering \cite{Ko.Pustejovska.ea:18}, and biological fluids rheology \cite{Lai.Kuei.ea:78,Galdi.Rannacher.ea:08}.
We focus on fluids with shear-rate-dependent viscosity, whose behavior is characterized by a nonlinear strain rate-shear stress function. Physical interpretations and discussions of non-Newtonian fluid models can be found, e.g., in \cite{Bird.Armstrong.ea:87,Malek.Rajagopal.ea:95}. Typical examples that are frequently used in the applications include the power-law and Carreau--Yasuda model, covered by the present analysis.

The earliest investigations of fluids with shear-dependent viscosity date back to the pioneering work of Ladyzhenskaya \cite{Ladyzhenskaya:69}. For a detailed mathematical study of the well-posedness and regularity of the continuous problem, see also \cite{Malek.Rajagopal:05, Ruzicka.Diening:07, Diening.Ettwein:08, Beirao-da-Veiga:09, Berselli.Ruzicka:20} and references therein. Early results on the numerical analysis of non-Newtonian fluid flow problems were given in \cite{Sandri:93,Barrett.Liu:94,Glowinski.Rappaz:03}. Later, these results were improved in \cite{Belenki.Berselli.ea:12} and \cite{Hirn:13} by proving error estimates that are optimal for fluids with shear thinning behavior (described by a power-law exponent $\sob\le 2$). In \cite{Belenki.Berselli.ea:12}, the authors considered a conforming inf-sup stable finite element discretization, while in \cite{Hirn:13} a low-order scheme with local projection stabilization was proposed. In both works, the use of Orlicz functions is instrumental to unify the treatment of the shear thinning and shear thickening cases (also called pseudoplastic and dilatant, respectively; cf. Example \ref{ex:Carreau--Yasuda}). 
More recently, a finite element method based on a four-field formulation of the nonlinear Stokes equations has been analyzed in \cite{Sandri:14}. Other notable contributions on the numerical approximation of generalized Stokes problems include \cite{Diening.Kreuzer.ea:13,Isaac.Stadler.ea:15,Kreuzer.Suli:16,Ko.Suli:18}.

The main issues to be accounted for in the numerical solution of non-Newtonian fluid flow problems are the presence of local features emerging from the nonlinear strain rate-shear stress relation, the incompressibility condition leading to indefinite systems, the roughly varying model coefficients, and, possibly, complex geometries requiring unstructured and highly-adapted meshes. The HHO method provides several advantages to deal with the complex nature of the problem, such as the support of general polygonal or polyhedral meshes, the possibility to select the approximation order, and unconditional inf-sup stability. Moreover, HHO schemes can be efficiently implemented thanks to the possibility of statically condensing a large subset of the unknowns for linearized versions of the problem encountered, e.g., when solving
the nonlinear system by the Newton method.
Hybrid High-Order methods have been successfully applied to the simulation of incompressible flows of Newtonian fluids governed by the Stokes \cite{Aghili.Boyaval.ea:15} and Navier--Stokes equations \cite{Di-Pietro.Krell:18,Botti.Di-Pietro.ea:19*1}, possibly driven by large irrotational volumetric forces \cite{Di-Pietro.Ern.ea:16,Castanon-Quiroz.Di-Pietro:20}. 
Works related to the problem of creeping flows of non-Newtonian fluids are \cite{Botti.Di-Pietro.ea:17} and \cite{Di-Pietro.Droniou:17,Di-Pietro.Droniou:17*1}, respectively dealing with nonlinear elasticity and Leray--Lions problems. Going from nonlinear coercive elliptic equations to the nonlinear Stokes system involves additional difficulties arising from the pressure and the divergence constraint.
Finally, we mention that HHO methods are members of a wider family of polytopal methods that also includes, e.g., Virtual Element methods (cf., e.g., \cite{Beirao-da-Veiga.Lovadina.ea:17,Beirao-da-Veiga.Lovadina.ea:18} for their application to Newtonian incompressible flows) and can fit within general frameworks for the approximation of nonlinear problems such as the one provided by the Gradient Discretisation Method (see \cite{Droniou.Eymard.ea:18,Di-Pietro.Droniou.ea:18}).

The HHO discretization presented in this paper hinges on discontinuous polynomial unknowns on the mesh and on its skeleton, from which discrete differential operators are reconstructed.
These operators are used to formulate discrete counterparts of the viscous and pressure-velocity coupling terms.
For the former, stability is ensured by a cleverly designed stabilization contribution involving the penalization of boundary differences.
We carry out a complete analysis of the proposed method. In particular, under general assumptions on the strain rate-shear stress function, we derive error estimates for the velocity and pressure approximations. The energy-norm error estimate for the velocity given in Theorem \ref{thm:error.estimate} yields the same convergence orders established in  \cite[Theorem 3.2]{Di-Pietro.Droniou:17*1} for the scalar Leray--Lions elliptic problem.
A key tool in our analysis is provided by Lemma \ref{lem:discrete.korn.inequality}, in which we prove a generalization of the discrete Korn inequality of \cite[Lemma 1]{Botti.Di-Pietro.ea:19*1} to the non-Hilbertian case. 
The other main contributions are a novel formulation of the requirements on the strain rate-shear stress function allowing a unified treatment of pseudoplastic and dilatant fluids and the identification of a set of general assumptions on the nonlinear stabilization function ensuring the desired consistency properties along with the well-posedness of the discrete problem.

The rest of the paper is organized as follows. In Section \ref{sec:continuous.setting} we introduce the strong and weak formulations of the nonlinear Stokes problem and present the assumptions on the strain rate-shear stress function.
The discrete setting is established in Section \ref{sec:discrete.setting}, including the definition of the discrete spaces for the velocity and the pressure.
The HHO scheme along with the main theoretical results are stated in Section \ref{sec:discrete.problem}, and a numerical validation is provided in Section \ref{sec:num.res}.
In Section \ref{sec:discrete.korn.inequality} we prove the discrete counterpart of the Korn inequality needed in the analysis of the method.
Section \ref{sec:analysis} contains the proof of the main results (well-posedness and error estimates).
Finally, in Appendix \ref{sec:properties.stress} we provide a sufficient condition for the strain rate-shear stress law to fulfil the assumptions presented in Section \ref{sec:continuous.setting}.
  The paper is structured so as to offer two levels of reading.
  In particular, the reader mainly interested in the formulation of the method and its numerical performance can focus on Section \ref{sec:continuous.setting}--\ref{sec:num.res}.
  The remaining sections cover technical aspects of the analysis, and can be skipped at first reading.


\section{Continuous setting}\label{sec:continuous.setting}

Let $\Omega \subset \R^d$, $d\in\{2,3\}$, denote a bounded, connected, polyhedral open set with Lipschitz boundary $\partial\Omega$. We consider a possibly non-Newtonian fluid occupying $\Omega$ and subjected to a volumetric force field $\b f : \Omega \to \R^d$. Its flow is governed by the generalized Stokes problem, which consists in finding the velocity field $\b u : \Omega \to \R^d$ and the pressure field $p : \Omega \to \R$ such that
\begin{subequations}\label{eq:stokes.continuous}
  \begin{alignat}{2} 
    -\DIV\stress(\cdot,\GRADs \b u) + \GRAD p &= \b f &\qquad& \mbox{  in  } \Omega, \label{eq:stokes.continuous:momentum} \\
    \div \b u &= 0 &\qquad& \mbox{ in }  \Omega, \label{eq:stokes.continuous:mass} \\
    \b u &= \b 0 &\qquad& \mbox{ on } \partial \Omega, \label{eq:stokes.continuous:bc} \\
    \int_\Omega p(\b x)\ud \b x &= 0, \label{eq:stokes.continuous:closure}
  \end{alignat}
\end{subequations} 
where $\DIV$ denotes the divergence operator applied to vector or tensor fields, $\GRADs$ is the symmetric part of the gradient operator $\GRAD$ applied to vector fields, and, denoting by $\Ms{d}$ the set of square, symmetric, real-valued $d\times d$ matrices, $\stress : \Omega \times \Ms{d} \to \Ms{d}$ is the strain rate-shear stress law.
In what follows, we formulate assumptions on $\stress$ that encompass common models for non-Newtonian fluids and state a weak formulation for problem \eqref{eq:stokes.continuous} that will be used as a starting point for its discretization.

\subsection{Strain rate-shear stress law}\label{sec:strain.rate.shear.stress.law}

We define the Frobenius inner product such that, for all $\b\tau= (\tau_{ij})_{1 \le i,j \le d}$ and $\b\eta= (\eta_{ij})_{1 \le i,j \le d}$ in $\M{d}$, $\b\tau : \b\eta \coloneqq \sum_{i,j=1}^d \tau_{ij}\eta_{ij}$, and we denote by $|\b\tau|_{d \times d}\coloneqq \sqrt{\b\tau : \b\tau}$ the corresponding norm.

\begin{assumption}[Strain rate-shear stress law]\label{ass:stress}
  Let a real number $\sob \in (1,\infty)$ be fixed, denote by $\sob' \coloneqq \frac{\sob}{\sob-1} \in (1,\infty)$  the conjugate exponent of $\sob$, and define the singular exponent of $\sob$ by
  \begin{equation}\label{eq:sing}
    \sobs \coloneq \min(\sob,2) \in (1,2].
  \end{equation}
  The strain rate-shear stress law satisfies
  \begin{subequations}\label{eq:ass:sigma}
    \begin{gather}
      \stress(\b x,\b 0) = \b 0 \text{ for almost every } \b x \in \Omega,\label{eq:ass-stress:0}
      \\
      \stress : \Omega \times \Ms{d} \to \Ms{d} \text{ is measurable}.\label{eq:ass-stress:power-framed} 
    \end{gather}
    Moreover, there exist real numbers $\sigma_\cst{de} \in [0,\infty)$ and $\sigma_\cst{hc},\sigma_\cst{sm} \in (0,\infty)$ such that, for all $\b\tau,\b\eta \in \Ms{d}$ and almost every $\b x \in \Omega$, we have the H\"older continuity property
      \label{eq:power-framed:s.holder.continuity.strong.monotonicity}
      \begin{align} 
        \left|
        \stress(\b x,\b\tau)-\stress(\b x,\b\eta)
        \right|_{d\times d} &\le \sigma_\cst{hc} \left(\sigma_\cst{de}^\sob+|\b\tau|_{d\times d}^\sob+|\b\eta|_{d\times d}^\sob\right)^\frac{\sob-\sobs}{\sob}| \b\tau-\b\eta |_{d\times d}^{\sobs-1},\label{eq:power-framed:s.holder.continuity} 
      \end{align}
      and the strong monotonicity property
      \begin{align}
        \left(\stress(\b x,\b\tau)-\stress(\b x,\b\eta)\right):(\b\tau-\b\eta) \left(\sigma_\cst{de}^\sob+|\b\tau|_{d\times d}^\sob+|\b\eta|_{d\times d}^\sob\right)^\frac{2-\sobs}{\sob} \ge \sigma_\cst{sm}|\b\tau-\b\eta|_{d\times d}^{\sob+2-\sobs}.\label{eq:power-framed:s.strong.monotonicity}
      \end{align}    
  \end{subequations}
\end{assumption}

Some remarks are in order.

\begin{remark}[Residual shear stress]
  Assumption \eqref{eq:ass-stress:0} can be relaxed by taking $\stress(\cdot,\b 0)  \in L^{\sob'}(\Omega,\Ms{d})$. This modification requires only minor changes in the analysis, not detailed for the sake of conciseness.
\end{remark}

\begin{remark}[Singular exponent]
  Inequalities \eqref{eq:power-framed:s.holder.continuity}--\eqref{eq:power-framed:s.strong.monotonicity} can be proved starting from the following assumptions, which correspond to the conditions \eqref{eq:od:power-framed:holder.continuity.strong.monotonicity} below characterizing an $\sob$-power-framed function:
  For all $\b\tau,\b\eta \in \Ms{d}$ with $\b\tau \neq \b\eta$ and almost every $\b x \in \Omega$,
  \[  \begin{aligned} 
      |\stress(\b x,\b\tau)-\stress(\b x,\b\eta)|_{d \times d} &\le \sigma_\cst{hc} \left(\sigma_\cst{de}^\sob+|\b\tau|_{d \times d}^\sob+|\b\eta|_{d \times d}^\sob\right)^\frac{\sob-2}{\sob}| \b\tau-\b\eta |_{d \times d},
      \\
      \left(\stress(\b x,\b\tau)-\stress(\b x,\b\eta)\right):\left(\b\tau-\b\eta\right) &\ge \sigma_\cst{sm}\left(\sigma_\cst{de}^\sob+|\b\tau|_{d \times d}^\sob+|\b\eta|_{d \times d}^\sob\right)^\frac{\sob-2}{\sob}|\b\tau-\b\eta|_{d \times d}^{2}.
    \end{aligned}
  \]
  These relations are reminiscent of the ones used in \cite{Di-Pietro.Droniou:17*1} in the context of scalar Leray--Lions problems.
  The advantage of assumptions \eqref{eq:power-framed:s.holder.continuity}-\eqref{eq:power-framed:s.strong.monotonicity}, expressed in terms of the singular index $\sobs$, is that they enable a unified treatment of the cases $\sob < 2$ and $\sob \ge 2$ in the proofs of Lemma \ref{lem:ah:holder.continuity.strong.monotonicity}, Theorem \ref{thm:well-posedness}, Lemma \ref{lem:consistency:ah}, and Theorem \ref{thm:error.estimate} below.
\end{remark}

\begin{remark}[Relations between the H\"older and monotonicity constants]
  Inequalities \eqref{eq:power-framed:s.holder.continuity} and \eqref{eq:power-framed:s.strong.monotonicity} give
  \begin{equation}\label{eq:power-framed:constants.bound}
    \sigma_\cst{sm} \leq \sigma_\cst{hc}.
  \end{equation}
  Indeed, let $\b\tau \in \Ms{d}$ be such that $|\b\tau|_{d\times d} > 0$. Using the strong monotonicity \eqref{eq:power-framed:s.strong.monotonicity} (with $\b \eta = \b 0$), the Cauchy--Schwarz inequality, and the H\"older continuity \eqref{eq:power-framed:s.holder.continuity} (again with $\b \eta = \b 0$), we infer that
  \[
  \begin{aligned}
    \sigma_\cst{sm}\left(\sigma_\cst{de}^\sob+|\b\tau|_{d\times d}^\sob\right)^\frac{\sobs-2}{\sob}|\b\tau|_{d\times d}^{\sob+2-\sobs} &\leq  \stress(\cdot,\b\tau):\b\tau\leq |\stress(\cdot,\b\tau)|_{d\times d}|\b\tau|_{d\times d}\leq \sigma_\cst{hc}\left(\sigma_\cst{de}^\sob+|\b\tau|_{d\times d}^\sob\right)^\frac{\sob-\sobs}{\sob}| \b\tau|_{d\times d}^{\sobs}
  \end{aligned}
  \]
  almost everywhere in $\Omega$. Hence, $\frac{\sigma_\cst{sm}}{\sigma_\cst{hc}} \le \left(\frac{\sigma_\cst{de}^\sob+|\b\tau|_{d\times d}^\sob}{| \b\tau|_{d\times d}^\sob}\right)^\frac{|\sob-2|}{\sob}$. Letting $|\b\tau|_{d\times d} \to \infty$ gives \eqref{eq:power-framed:constants.bound}.
\end{remark}

\begin{example}[Carreau--Yasuda fluids]\label{ex:Carreau--Yasuda}
  $(\mu,\delta,a,\sob)$-Carreau--Yasuda fluids, introduced in \cite{Yasuda.Armstrong.ea:81} and later generalized in \cite[Eq. (1.2)]{Hirn:13}, are fluids for which it holds, for almost every $\b x\in\Omega$ and all $\b\tau \in \Ms{d}$,
  \begin{equation}\label{eq:Carreau--Yasuda}
    \stress(\b x,\b\tau) = \mu(\b x)\left(\delta^{a(\b x)}+|\b\tau|_{d \times d}^{a(\b x)}\right)^\frac{\sob-2}{a(\b x)}\b\tau,
  \end{equation}
  where $\mu : \Omega \to [\mu_-,\mu_+]$ is a measurable function with $\mu_-,\mu_+ \in (0,\infty)$ corresponding to the local flow consistency index, $\delta \in [0,\infty)$ is the degeneracy parameter, $a : \Omega \to [a_-,a_+]$ is a measurable function with $a_-,a_+ \in (0,\infty)$ expressing the local transition flow behavior index, and $\sob \in (1,\infty)$ is the flow behavior index. The Carreau--Yasuda law is a generalization of the Carreau law (corresponding to $a_- = a_+ = 2$) that takes into account the different local levels of flow behavior in the fluid. The degenerate case $\delta=0$ corresponds to the power-law model. 
    Non-Newtonian fluids described by constitutive laws with a $(\mu,\delta,a,\sob)$-structure exhibit a different behavior according to the value of $\sob$. If $\sob > 2$, then the fluid shows shear thickening behavior and is called \emph{dilatant}. Examples of dilatant fluids are wet sand and oobleck. The case $\sob < 2$, on the other hand, corresponds to \emph{pseudoplastic} fluids having shear thinning behavior, such as blood. Finally, if $\sob = 2$, then the fluid is Newtonian and \eqref{eq:stokes.continuous} becomes the classical (linear) Stokes problem. 
    We show in Appendix \ref{sec:properties.stress} that the strain rate-shear stress law \eqref{eq:Carreau--Yasuda} is an $\sob$-power-framed function with $\sigma_\cst{de} = \delta$,
    \[
    \sigma_\cst{hc} = \begin{cases}
      \frac{\mu_+}{\sob-1}2^{\left[-\left(\frac{1}{a_+}-\frac{1}{\sob}\right)^\ominus-1\right](\sob-2)+\frac{1}{r}} & \text{if } \sob < 2, 
      \\
      \mu_+(\sob-1)2^{\left(\frac{1}{a_-}-\frac{1}{\sob}\right)^\oplus(\sob-2)} & \text{if } \sob \ge 2,
    \end{cases}
    \quad\text{and}\quad
    \sigma_\cst{sm} = \begin{cases}
      \mu_-(\sob-1)2^{\left(\frac{1}{a_-}-\frac{1}{\sob}\right)^\oplus(\sob-2)} & \text{if } \sob \le 2, 
      \\
      \frac{\mu_-}{\sob-1}2^{\left[-\left(\frac{1}{a_+}-\frac{1}{\sob}\right)^\ominus-1\right](\sob-2)-1} & \text{if } \sob > 2,
    \end{cases}
    \]
where $\xi^\oplus\coloneq\max(0,\xi)$ and $\xi^\ominus\coloneq-\min(0,\xi)$ denote, respectively, the positive and negative parts of a real number $\xi$. As a consequence, it matches Assumption \ref{ass:stress}.
\end{example}

\subsection{Weak formulation}\label{sec:weak.formulation}

From this point on, we omit both the integration variable and the measure from integrals, as they can be in all cases inferred from the context.
We define the following velocity and pressure spaces embedding, respectively, the homogeneous boundary condition and the zero-average constraint: 
\[
\b U \coloneqq \left\{\b v \in W^{1,\sob}(\Omega,\R^d)\ : \ \b v\res{\partial\Omega} = \b 0 \right\},
\qquad
P \coloneqq L^{\sob'}_0(\Omega,\R) \coloneqq \left\{q \in L^{\sob'}(\Omega,\R)\ : \ \textstyle\int_\Omega q = 0 \right\}. 
\]
Assuming $\b f \in L^{\sob'}(\Omega,\R^d)$, the weak formulation of problem \eqref{eq:stokes.continuous} reads:
Find $(\b u,p) \in \b U \times P$ such that
\begin{subequations}\label{eq:stokes.weak}
  \begin{alignat}{2}
     a(\b u,\b v)+b(\b v,p) &= \displaystyle\int_\Omega \b f \cdot \b v &\qquad \forall \b v \in \b U,\label{eq:stokes.weak:momentum} \\
     -b(\b u,q) &= 0 &\qquad \forall q \in P,  \label{eq:stokes.weak:mass} 
  \end{alignat}
\end{subequations}
where the function $a : \b U \times \b U \to \R$ and the bilinear form $b : \b U \times L^{\sob'}(\Omega,\R) \to \R$ are defined such that, for all $\b v,\b w \in \b U$ and all $q \in L^{\sob'}(\Omega,\R)$,
\begin{equation}\label{eq:a.b}
  a(\b w,\b v) \coloneqq \displaystyle\int_\Omega \stress(\cdot,\GRADs \b w) : \GRADs \b v,\qquad
  b(\b v,q)  \coloneqq -\displaystyle\int_\Omega (\div \b v) q.
\end{equation}

\begin{remark}[Mass equation]
  The test space in \eqref{eq:stokes.weak:mass} can be extended to $L^{\sob'}(\Omega,\R)$ since, for all $\b v \in \b U$, the divergence theorem and the fact that $\b v\res{\partial\Omega} = \b 0$ yield $b(\b v,1) = -\int_\Omega \div \b v = - \int_{\partial\Omega} \b v \cdot \b n_{\partial \Omega} = 0$, with $\b n_{\partial \Omega}$ denoting the unit vector normal to $\partial\Omega$ and pointing out of $\Omega$.
\end{remark}

\begin{remark}[Well-posedness and a priori estimates]\label{rem:a-priori}
  It can be checked that, under Assumption \ref{ass:stress}, the continuous problem \eqref{eq:stokes.weak} admits a unique solution $(\b u,p) \in \b U \times P$; see, e.g., \cite[Section 2.4]{Hirn:13}, where slightly stronger assumptions are considered.
  For future use, we also note the following a priori bound on the velocity:
  \begin{equation}\label{eq:continuous.solution:bounds:uh}
    |\b u|_{W^{1,\sob}(\Omega,\R^d)} \le \left(2^{\frac{2-\sobs}{\sob}}C_\cst{K}\sigma_\cst{sm}^{-1}\| \b f \|_{L^{\sob'}(\Omega,\R^d)}\right)^\frac{1}{\sob-1}+\left(2^{\frac{2-\sobs}{\sob}}C_\cst{K}|\Omega|_d^{\frac{2-\sobs}{\sob}}\sigma_\cst{de}^{2-\sobs}\sigma_\cst{sm}^{-1}\| \b f \|_{L^{\sob'}(\Omega,\R^d)}\right)^\frac{1}{\sob+1-\sobs},
  \end{equation}
    where $C_\cst{K}>0$ comes from the Korn inequality given at \eqref{eq:Korn} below.
  To prove \eqref{eq:continuous.solution:bounds:uh}, use the strong-monotonicity \eqref{eq:power-framed:s.strong.monotonicity} of $\stress$, sum \eqref{eq:stokes.weak:momentum} written for $\b v=\b u$ to \eqref{eq:stokes.weak:mass} written for $q=p$, and use the H\"older inequality together with the Korn inequality \eqref{eq:Korn} to write
  \[
  \begin{aligned}
    \sigma_\cst{sm}\left(
    |\Omega|_d\sigma_\cst{de}^\sob + \|\GRADs \b u\|_{L^\sob(\Omega,\M{d})}^\sob
    \right)^\frac{\sobs-2}{\sob} \|\GRADs \b u\|_{L^\sob(\Omega,\M{d})}^{\sob+2-\sobs}
    &\le  a(\b u,\b u) \\
    &= \displaystyle\int_\Omega \b f \cdot \b u
    \le C_\cst{K} \| \b f \|_{L^{\sob'}(\Omega,\R^d)}\|\GRADs \b u\|_{L^\sob(\Omega,\M{d})},
  \end{aligned}
  \]
where $|\Omega|_d$ is the measure of $\Omega$, that is,
  \begin{equation}\label{eq:continuous.solution:bounds:uh:1}
    \mathcal{N}\coloneqq \left(
    |\Omega|_d\sigma_\cst{de}^\sob + \|\GRADs \b u\|_{L^\sob(\Omega,\M{d})}^\sob
    \right)^\frac{\sobs-2}{\sob} \|\GRADs \b u\|_{L^\sob(\Omega,\M{d})}^{\sob+1-\sobs} \le C_\cst{K} \sigma_\cst{sm}^{-1}\| \b f \|_{L^{\sob'}(\Omega,\R^d)}.
  \end{equation}
  Observing that $\|\GRADs \b u\|_{L^\sob(\Omega,\M{d})}^{\sob+1-\sobs}  \le 2^{\frac{2-\sobs}{\sob}}\max\left(\|\GRADs \b u\|_{L^\sob(\Omega,\M{d})}^r,|\Omega|_d\sigma_\cst{de}^r\right)^\frac{2-\sobs}{r}\mathcal{N} $, we obtain, enumerating the cases for the maximum and summing the corresponding bounds, $\|\GRADs \b u\|_{L^\sob(\Omega,\M{d})} \le (2^{\frac{2-\sobs}{\sob}}\mathcal{N})^\frac{1}{\sob-1}+\Big(2^{\frac{2-\sobs}{\sob}}|\Omega|_d^{\frac{2-\sobs}{\sob}}\sigma_\cst{de}^{2-\sobs}\mathcal{N}\Big)^\frac{1}{\sob+1-\sobs} $.
  Combining this inequality with \eqref{eq:continuous.solution:bounds:uh:1} gives \eqref{eq:continuous.solution:bounds:uh}.
\end{remark}


\section{Discrete setting}\label{sec:discrete.setting}

\subsection{Mesh and notation for inequalities up to a multiplicative constant}

We define a mesh as a couple $\mathcal M_h\coloneq(\T_h,\F_h)$, where $\T_h$ is a finite collection of polyhedral elements $T$ such that $h=\max_{T\in\T_h}h_T$ with $h_T$ denoting the diameter of $T$, while $\F_h$ is a finite collection of planar faces $F$ with diameter $h_F$.
Notice that, here and in what follows, we use the three-dimensional nomenclature also when $d=2$, i.e., we speak of polyhedra and faces rather than polygons and edges.
It is assumed henceforth that the mesh $\mathcal M_h$ matches the geometrical requirements detailed in \cite[Definition 1.7]{Di-Pietro.Droniou:20}.
In order to have the boundedness property \eqref{eq:I:boundedness} for the interpolator, we additionally assume that the mesh elements are star-shaped with respect to every point of a ball of radius uniformly comparable to the element diameter; see \cite[Lemma 7.12]{Di-Pietro.Droniou:20} for the Hilbertian case.
Boundary faces lying on $\partial\Omega$ and internal faces contained in
$\Omega$ are collected in the sets $\F_h^{\rm b}$ and $\F_h^{\rm i}$, respectively.
For every mesh element $T\in\T_h$, we denote by $\F_T$ the subset of $\F_h$ containing the faces that lie on the boundary $\partial T$ of $T$. For every face $F \in \F_h$, we denote by $\T_F$ the subset of $\T_h$ containing the one (if $F\in\F_h^{\rm b}$) or two (if $F\in\F_h^{\rm i}$) elements on whose boundary $F$ lies.
Finally, for each mesh element $T\in\T_h$ and face $F\in\F_T$, $\b n_{TF}$ denotes the (constant) unit vector normal to $F$ pointing out of $T$.

Our focus is on the $h$-convergence analysis, so we consider a sequence of refined meshes that is regular in the sense of \cite[Definition 1.9]{Di-Pietro.Droniou:20} with regularity parameter uniformly bounded away from zero.
The mesh regularity assumption implies, in particular, that the diameter of a mesh element and those of its faces are comparable uniformly in $h$ and that the number of faces of one element is bounded above by an integer independent of $h$.

To avoid the proliferation of generic constants, we write henceforth $a\lesssim b$ (resp., $a\gtrsim b$) for the inequality $a\le Cb$ (resp., $a\ge Cb$) with real number $C>0$ independent of $h$, of the constants $\sigma_\cst{de},\sigma_\cst{hc},\sigma_\cst{sm}$ in Assumption \ref{ass:stress}, and, for local inequalities, of the mesh element or face on which the inequality holds.
We also write $a\simeq b$ to mean $a\lesssim b$ and $b\lesssim a$.
The dependencies of the hidden constants are further specified when needed.

\subsection{Projectors and broken spaces}
  
Given $X \in \T_h \cup \F_h$ and $l \in \N$, we denote by $\Poly^l(X,\R)$ the space spanned by the restriction to $X$ of scalar-valued, $d$-variate polynomials of total degree $\le l$.
The local $L^2$-orthogonal projector $\proj{X}{l} : L^{1}(X,\R) \to \Poly^l(X,\R)$ is defined such that, for all $v \in L^{1}(X,\R)$,
\begin{equation}\label{eq:proj}
  \displaystyle\int_X (\proj{X}{l} v-v) w = 0 \qquad \forall w \in  \Poly^{l}(X,\R).
\end{equation}
When applied to vector-valued fields in $L^1(X,\R^d)$ (resp., tensor-valued fields in $L^1(X,\M{d})$), the $L^2$-orthogonal projector mapping on $\Poly^l(X,\R^d)$ (resp., $\Poly^l(X,\M{d})$) acts component-wise and is denoted in boldface font.
Let $T\in\T_h$, $n\in[0,l+1]$ and $m\in[0,n]$.
The following $(n,\sob,m)$-approximation properties of $\proj{T}{l}$ hold:
For any $v\in W^{n,\sob}(T,\R)$,
\begin{subequations}\label{eq:proj:app}
\begin{equation}\label{eq:proj:app:T}
  |v-\proj{T}{l}v|_{W^{m,\sob}(T,\R)} \lesssim h_T^{n-m}|v|_{W^{n,\sob}(T,\R)}.
\end{equation}
The above property will also be used in what follows with $\sob$ replaced by its conjugate exponent $\sob'$.
If, additionally, $n\ge 1$, we have the following $(n,\sob')$-trace approximation property:
\begin{equation}\label{eq:proj:app:F}
    \|v-\proj{T}{l}v\|_{L^{\sob'}(\partial T,\R)}\lesssim h_T^{n-\frac{1}{\sob'}}|v|_{W^{n,\sob'}(T,\R)}.
\end{equation}
\end{subequations}
The hidden constants in \eqref{eq:proj:app} are independent of $h$ and $T$, but possibly depend on $d$, the mesh regularity parameter, $l$, $n$, and $\sob$.
The approximation properties \eqref{eq:proj:app} are proved for integer $n$ and $m$ in \cite[Appendix A.2]{Di-Pietro.Droniou:17} (see also \cite[Theorem 1.45]{Di-Pietro.Droniou:20}), and can be extended to non-integer values using standard interpolation techniques (see, e.g., \cite[Theorem 5.1]{Lions.Magenes:72}).

At the global level, for a given integer $l\ge 0$, we define the broken polynomial space $\Poly^l(\T_h,\R)$ spanned by functions in $L^1(\Omega,\R)$ whose restriction to each mesh element $T\in\T_h$ lies in $\Poly^l(T,\R)$, and we define the global $L^2$-orthogonal projector $\proj{h}{l} : L^{1}(\Omega,\R) \to \Poly^l(\T_h,\R)$ such that, for all $v \in L^{1}(\Omega,\R)$ and all $T \in \T_h$,
\[
(\proj{h}{l} v)\res{T} \coloneq \proj{T}{l} v\res{T}.
\]
Broken polynomial spaces are subspaces of the broken Sobolev spaces
\[
W^{n,\sob}(\T_h,\R)\coloneq\left\{ v\in L^\sob(\Omega,\R)\ : \ v\res{T}\in W^{n,\sob}(T,\R)\quad\forall T\in\T_h\right\}.
\]
We define the broken gradient operator $\brkGRAD : W^{1,1}(\T_h,\R) \rightarrow L^1(\Omega,\R^d)$ such that, for all $v \in W^{1,1}(\T_h,\R)$ and all $T \in \T_h$, $(\brkGRAD v)\res{T} \coloneq \GRAD v\res{T}$. We define similarly the broken gradient acting on vector fields along with its symmetric part $\brkGRADs$, as well as the broken divergence operator $\brkDIV$ acting on tensor fields.
The global $L^2$-orthogonal projector $\PROJ{h}{l}$ mapping vector-valued fields in $L^1(\Omega,\R^d)$ (resp., tensor-valued fields in $L^1(\Omega,\M{d})$) on $\Poly^l(\T_h,\R^d)$ (resp., $\Poly^l(\T_h,\M{d})$) is obtained applying $\proj{h}{l}$ component-wise.

\subsection{Discrete spaces and norms}

Let an integer $k\ge 1$ be fixed. The HHO space of discrete velocity unknowns is
\[
\bdU{h}{k} \coloneqq \left\{
\bu v_h = ((\b v_T)_{T \in \T_h},(\b v_F)_{F\in \F_h}) \ : \ \b v_T \in \Poly^k(T,\R^d)\ \ \forall T \in \T_h\ \mbox{ and }\ \b v_F \in \Poly^k(F,\R^d)\ \ \forall F \in \F_h \right\}.
\]
The interpolation operator $\bI{h}{k} : W^{1,1}(\Omega,\R^d) \to  \bdU{h}{k} $ maps a function $\b v \in W^{1,1}(\Omega,\R^d)$ on the vector of discrete unknowns $\bI{h}{k}\b v$ defined as follows:
\[
  \bI{h}{k} \b v \coloneqq ((\PROJ{T}{k} \b v\res{T})_{T \in \T_h},(\PROJ{F}{k} \b v\res{F})_{F \in \F_h}).
  \]
For all $T \in \T_h$, we denote by $\bdU{T}{k}$ and $\bI{T}{k}$ the restrictions of $\bdU{h}{k}$ and $\bI{h}{k}$ to $T$, respectively and, for all $\bu v_h \in \bdU{h}{k}$, we let $\bu v_T \coloneqq (\b v_T,(\b v_F)_{F\in \F_T}) \in \bdU{T}{k}$ denote the vector collecting the discrete unknowns attached to $T$ and its faces.
Furthermore, for all $\bu v_h \in \bdU{h}{k}$, we define the broken polynomial field $\b v_h\in\Poly^k(\T_h,\R^d)$ obtained patching element unknowns, that is,
\begin{equation}\label{eq:vh}
  (\b v_h)\res{T} \coloneqq \b v_T\qquad\forall T \in \T_h.
\end{equation}

We define on $\bdU{h}{k}$ the $W^{1,\sob}(\Omega,\R^d)$-like strain seminorm $\| {\cdot} \|_{\sob,h}$ such that, for all $\bu v_h \in \bdU{h}{k}$,
\begin{subequations}\label{eq:norm.epsilon.r}
  \begin{gather}\label{eq:norm.epsilon.r.h}
    \| \bu v_h \|_{\sob,h} \coloneqq \left(\displaystyle\sum_{T \in \T_h}\| \bu v_T \|_{\sob,T}^\sob\right)^\frac{1}{\sob}
    \\\label{eq:norm.epsilon.r.T}
    \text{with 
      $\| \bu v_T \|_{\sob,T} \coloneqq \left(\| \GRADs \b v_T \|^\sob_{L^\sob(T,\M{d})} + \displaystyle\sum_{F \in \F_T} h_F^{1-\sob} \| \b v_F - \b v_T\|^\sob_{L^\sob(F,\R^d)}\right)^\frac{1}{\sob}$
      for all $T \in \T_h$.
    }
  \end{gather}
\end{subequations}
The following boundedness property for $\bI{T}{k}$ can be proved adapting the arguments of \cite[Proposition 6.24]{Di-Pietro.Droniou:20} and requires the star-shaped assumption on the mesh elements:
For all $T \in \T_h$ and all $\b v \in W^{1,\sob}(T,\R^d)$,
\begin{equation}\label{eq:I:boundedness}
  \|\bI{T}{k} \b v\|_{\sob,T} \lesssim | \b v |_{W^{1,\sob}(T,\R^d)},
\end{equation}
where the hidden constant depends only on $d$, the mesh regularity parameter, $\sob$, and $k$.

The discrete velocity and pressure are sought in the following spaces, which embed, respectively, the homogeneous boundary condition for the velocity and the zero-average constraint for the pressure:
\[
\bdU{h,0}{k} \coloneqq \left\{ \bu v_h = ((\b v_T)_{T \in \T_h},(\b v_F)_{F\in \F_h}) \in \bdU{h}{k} \ : \ \b v_F = \b 0 \quad \forall F \in \Fb \right\},\quad
\dP{h}{k} \coloneqq \Poly^k(\T_h,\R)\cap P.
\]
By the discrete Korn inequality proved in Lemma \ref{lem:discrete.korn.inequality} below, $\| {\cdot} \|_{\sob,h}$ is a norm on $\bdU{h,0}{k}$ (the proof is obtained reasoning as in \cite[Corollary 2.16]{Di-Pietro.Droniou:20}).


\section{HHO scheme}\label{sec:discrete.problem}

In this section, after introducing the discrete counterparts of the viscous and pressure-velocity coupling terms, we state the discrete problem along with the main results.

\subsection{Viscous term}

\subsubsection{Local symmetric gradient reconstruction}

For all $T \in \T_h$, we define the local symmetric gradient reconstruction $\dgrads{k}{T} : \bdU{T}{k} \to \Poly^{k}(T,\Ms{d})$ such that, for all $\bu v_T \in \bdU{T}{k}$,
\begin{equation}\label{eq:G}
  \displaystyle\int_T \dgrads{k}{T} \bu v_T : \b\tau = \int_T \GRADs \b v_T : \b\tau + \sum_{F \in \F_T} \int_F (\b v_F-\b v_T)\cdot (\b\tau \b n_{TF})\qquad \forall \b\tau \in  \Poly^{k}(T,\Ms{d}).
\end{equation}
This symmetric gradient reconstruction, originally introduced in \cite[Section 4.2]{Botti.Di-Pietro.ea:17}, is designed so that the following relation holds (see, e.g., \cite[Proposition 5]{Botti.Di-Pietro.ea:17*1} or \cite[Section 7.2.5]{Di-Pietro.Droniou:20}):
For all $\b v\in W^{1,1}(T,\R^d)$,
\begin{equation}\label{eq:G:proj}
  \dgrads{k}{T} (\bI{T}{k} \b v) = \PROJ{T}{k}(\GRADs \b v).
\end{equation}
The global symmetric gradient reconstruction $\dgrads{k}{h} : \bdU{h}{k} \to \Poly^{k}(\T_h,\Ms{d})$ is obtained patching the local contributions, that is, for all $\bu v_h \in \bdU{h}{k}$, we set
\begin{equation}\label{eq:Gh}
  (\dgrads{k}{h} \bu v_h)\res{T} \coloneq \dgrads{k}{T} \bu v_T \qquad \forall T\in\T_h.
\end{equation}

\subsubsection{Discrete viscous function}

The discrete counterpart of the function $a$ defined by \eqref{eq:a.b} is $\cst{a}_h : \bdU{h}{k} \times \bdU{h}{k} \to \R$ such that, for all $\bu v_h,\bu w_h \in \bdU{h}{k}$,
\begin{equation}\label{eq:ah}
  \cst{a}_h(\bu w_h, \bu v_h) \coloneqq \displaystyle\int_\Omega \stress(\cdot,\dgrads{k}{h} \bu w_h): \dgrads{k}{h} \bu v_h +\gamma \cst{s}_h(\bu w_h,\bu v_h).
\end{equation}
In the above definition, recalling \eqref{eq:power-framed:constants.bound}, $\gamma$ is a stabilization parameter such that
\begin{equation}\label{eq:gamma}
  \gamma \in [\sigma_\cst{sm},\sigma_\cst{hc}],
\end{equation}
while the stabilization function $\cst{s}_h : \bdU{h}{k} \times \bdU{h}{k} \to \R$ is such that, for all $\bu v_h,\bu w_h \in \bdU{h}{k}$,
\begin{equation}\label{eq:sh}
  \cst{s}_h(\bu w_h,\bu v_h) \coloneqq \displaystyle\sum_{T \in \T_h}\cst{s}_T(\bu w_T,\bu v_T),
\end{equation}
where the local contributions are assumed to satisfy the following assumption.

\begin{assumption}[Local stabilization function]\label{ass:sT}
  For all $T \in \T_h$, the local stabilization function $\cst{s}_T:\bdU{T}{k}\times\bdU{T}{k}\to\R$ is linear in its second argument and satisfies the following properties, with hidden constants independent of both $h$ and $T$:
    \begin{enumerate}
    \item \emph{Stability and boundedness.} Recalling the definition \eqref{eq:norm.epsilon.r.T} of the local $\|{\cdot}\|_{\sob,T}$-seminorm, for all $\bu v_T\in\bdU{T}{k}$ it holds:
      \begin{equation}\label{eq:sT:stability.boundedness}
        \| \dgrads{k}{T}\bu v_T \|_{L^\sob(T,\M{d})}^\sob + \cst{s}_T(\bu v_T,\bu v_T)
        \simeq \|\bu v_T\|_{\sob,T}^\sob.
      \end{equation}
    \item \emph{Polynomial consistency.}
      For all $\b w\in\Poly^{k+1}(T,\R^d)$ and all $\bu v_T\in\bdU{T}{k}$,
      \begin{equation}\label{eq:sT:polynomial.consistency}
        \cst{s}_T(\bI{T}{k} \b w,\bu v_T) = 0.
      \end{equation}
    \item \emph{H\"{o}lder continuity.}
      For all $\bu u_T, \bu v_T, \bu w_T \in \bdU{T}{k}$, it holds, setting $\bu e_T\coloneq\bu u_T - \bu w_T$,
      \begin{equation}\label{eq:sT:holder-continuity}   
        \hspace{-0.5cm} \left|\cst{s}_T(\bu u_T,\bu v_T)-\cst{s}_T(\bu w_T,\bu v_T)\right|
        \lesssim 
        \left(\cst{s}_T(\bu u_T,\bu u_T)+\cst{s}_T(\bu w_T,\bu w_T)\right)^\frac{\sob-\sobs}{\sob}\cst{s}_T(\bu e_T,\bu e_T)^\frac{\sobs-1}{\sob}\cst{s}_T(\bu v_T,\bu v_T)^\frac{1}{\sob}.
      \end{equation}
    \item \emph{Strong monotonicity.}
      For all $\bu u_T, \bu w_T \in \bdU{T}{k}$ , it holds, setting again $\bu e_T\coloneq\bu u_T - \bu w_T$,
      \begin{equation}\label{eq:sT:strong-monotonicity} 
        \left(\cst{s}_T(\bu u_T,\bu e_T) - \cst{s}_T(\bu w_T,\bu e_T)\right)\left( \cst{s}_T(\bu u_T,\bu u_T)+\cst{s}_T(\bu w_T,\bu w_T)\right)^\frac{2-\sobs}{\sob} \gtrsim \cst{s}_T(\bu e_T,\bu e_T)^\frac{\sob+2-\sobs}{\sob}.
      \end{equation}
    \end{enumerate}
\end{assumption}

\begin{remark}[Comparison with the linear case]
  If $\sob=2$, $\cst{s}_T$ can be any symmetric bilinear form satisfying \eqref{eq:sT:stability.boundedness}--\eqref{eq:sT:polynomial.consistency}.
  Indeed, property \eqref{eq:sT:holder-continuity} coincides in this case with the Cauchy--Schwarz inequality, while, by linearity of $\cst{s}_T$, property \eqref{eq:sT:strong-monotonicity} holds with the equal sign.
\end{remark}

\subsubsection{An example of viscous stabilization function}

Taking inspiration from the scalar case (cf., e.g., \cite[Eq. (4.11c)]{Di-Pietro.Droniou:17}), a local stabilization function that matches Assumption \ref{ass:sT} can be obtained setting, for all $\bu v_T,\bu w_T \in \bdU{T}{k}$,
\begin{equation}\label{eq:sT}
  \cst{s}_T(\bu w_T,\bu v_T) \coloneqq \int_{\partial T} |\bdfbres{k}{\partial T} \bu w_T|^{\sob-2}\bdfbres{k}{\partial T} \bu w_T \cdot \bdfbres{k}{\partial T} \bu v_T,
\end{equation}
where, denoting by $\Poly^k(\F_T,\R^d)$ the space of vector-valued broken polynomials of total degree $\le k$ on $\F_T$, the boundary residual operator $\bdfbres{k}{\partial T} : \bdU{T}{k} \to \Poly^k(\F_T,\R^d)$ is such that, for all $\bu v_T \in \bdU{T}{k}$,
\[
(\bdfbres{k}{\partial T} \bu v_T)\res{F} \coloneq
h_F^{-\frac1{\sob'}}\left(
  \PROJ{F}{k}(\bdrec{k+1}{T} \bu v_T-\b v_F)-\PROJ{T}{k}(\bdrec{k+1}{T} \bu v_T-\b v_T)
  \right)
\qquad\forall F\in\F_T,
\]
with velocity reconstruction $\bdrec{k+1}{T} : \bdU{T}{k} \to \Poly^{k+1}(T,\R^d)$ such that
\begin{gather*}
  \displaystyle\int_T (\GRADs \bdrec{k+1}{T} \bu v_T - \dgrads{k}{T} \bu v_T) : \GRADs\b w = 0\qquad\forall\b w \in  \Poly^{k+1}(T,\R^d), \\
  \text{
    $\displaystyle\int_T \bdrec{k+1}{T} \bu v_T = \int_T \b v_T$, and
    $\int_T \GRADss \bdrec{k+1}{T} \bu v_T = \frac{1}{2}\sum_{F \in \F_T} \int_F (\tsprod{\b v_F}{\b n_{TF}} - \tsprod{\b n_{TF}}{\b v_F})$.
  }
\end{gather*}
Above, $\GRADss$ denotes the skew-symmetric part of the gradient operator $\GRAD$ applied to vector fields and $\tsprod{}{}$ is the tensor product such that, for all $\b x = (x_i)_{1 \le i \le d}$ and $\b y = (y_i)_{1 \le i \le d}$ in $\R^d$, $\tsprod{\b x}{\b y} \coloneq (x_i y_j)_{1 \le i,j \le d} \in \M{d}$.

\begin{lemma}[Stabilization function \eqref{eq:sT}]\label{lem:sT}
  The local stabilization function defined by \eqref{eq:sT} satisfies Assumption \ref{ass:sT}.
\end{lemma}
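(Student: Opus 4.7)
The plan is to verify the four items \Sref{ass:sT:stability.boundedness}--\Sref{ass:sT:strong-monotonicity} of Assumption~\ref{ass:sT} in turn, relying on pointwise $p$-structure inequalities for the map $\b\xi \mapsto |\b\xi|^{\sob-2}\b\xi$ on the boundary, combined with the properties of the reconstructions $\drec{k+1}{T}$ and $\dgrads{k}{T}$.

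I would begin with the polynomial consistency \Sref{ass:sT:polynomial.consistency}, which is the cleanest. Noting that $\cst{s}_T(\I{T}{k}\b w,\bu v_T)$ is linear in $\dfbres{k}{\partial T}(\I{T}{k}\b w)$, it suffices to show that this boundary residual vanishes for every $\b w\in\Poly^{k+1}(T,\R^d)$. By construction, the velocity reconstruction satisfies $\drec{k+1}{T}(\I{T}{k}\b w) = \b w$ for any $\b w\in\Poly^{k+1}(T,\R^d)$ (this is the standard identity for the Stokes-type local reconstruction, shown by testing the defining relations and using $\dgrads{k}{T}(\I{T}{k}\b w) = \PROJ{T}{k}(\GRADs\b w) = \GRADs\b w$ together with the closure conditions on the average and skew-symmetric parts). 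Consequently, on each face $F\in\F_T$,
\[
\PROJ{F}{k}(\drec{k+1}{T}(\I{T}{k}\b w) - \PROJ{F}{k}\b w\res{F}) = 0,\qquad
\PROJ{T}{k}(\drec{k+1}{T}(\I{T}{k}\b w) - \PROJ{T}{k}\b w) = 0,
\]
so $\dfbres{k}{\partial T}(\I{T}{k}\b w) \equiv \b 0$ and hence $\cst{s}_T(\I{T}{k}\b w,\bu v_T)=0$.

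Next I would address the stability and boundedness \Sref{ass:sT:stability.boundedness}. Observing that $\cst{s}_T(\bu v_T,\bu v_T) = \|\dfbres{k}{\partial T}\bu v_T\|_{L^\sob(\partial T,\R^d)}^\sob$, the equivalence reduces to
\[
\|\dgrads{k}{T}\bu v_T\|_{L^\sob(T,\M{d})}^\sob + \sum_{F\in\F_T} h_F\,\|\dfbres{k}{\partial T}\bu v_T\|_{L^\sob(F,\R^d)/h_F^{1/\sob'}}^\sob \simeq \|\bu v_T\|_{\strain,\sob,T}^\sob,
\]
which is the direct $L^\sob$ analogue of the Hilbertian result in~\cite[Section~7.2.5]{Di-Pietro.Droniou:20}. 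The $\lesssim$ direction follows by testing the definition~\eqref{eq:G} of $\dgrads{k}{T}$ with $\b\tau=\dgrads{k}{T}\bu v_T$, using discrete trace and inverse inequalities on $\Poly^k(T,\Ms{d})$ together with H\"older's inequality, and then combining with the trace stability of the $L^2$-projectors entering $\dfbres{k}{\partial T}$ to control the boundary differences $\b v_F-\b v_T$. The $\gtrsim$ direction uses the same inverse/trace toolkit in reverse, together with the boundedness of $\drec{k+1}{T}$ from $\dU{T}{k}$ to $W^{1,\sob}(T,\R^d)$, which follows by testing its definition with $\b w = \drec{k+1}{T}\bu v_T$ and invoking the continuous Korn inequality on the star-shaped element $T$.

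The remaining items \Sref{ass:sT:holder-continuity}--\Sref{ass:sT:strong-monotonicity} both reduce to pointwise $p$-structure inequalities for the vector field $\b\Sigma(\b\xi)\coloneq|\b\xi|^{\sob-2}\b\xi$. This map is an $\sob$-power-framed function with $\sigma_\cst{de}=0$, so by Appendix~\ref{sec:properties.stress} (or directly by the classical Barrett--Liu estimates that underlie~\eqref{eq:power-framed:s.holder.continuity.strong.monotonicity}) it satisfies, pointwise on $\partial T$ with $\b a\coloneq\dfbres{k}{\partial T}\bu u_T$, $\b b\coloneq\dfbres{k}{\partial T}\bu w_T$, $\b c\coloneq\dfbres{k}{\partial T}\bu v_T$, $\b d\coloneq\b a-\b b=\dfbres{k}{\partial T}\bu e_T$,
\[
|\b\Sigma(\b a)-\b\Sigma(\b b)|\,|\b c|
\lesssim \bigl(|\b a|^\sob+|\b b|^\sob\bigr)^{\frac{\sob-\sing}{\sob}}|\b d|^{\sing-1}|\b c|,
\]
\[
\bigl(\b\Sigma(\b a)-\b\Sigma(\b b)\bigr)\cdot\b d\,\bigl(|\b a|^\sob+|\b b|^\sob\bigr)^{\frac{2-\sing}{\sob}} \gtrsim |\b d|^{\sob+2-\sing}.
\]
Integrating these over $\partial T$ and invoking the continuous $(1;\tfrac{\sob}{\sob-\sing},\tfrac{\sob}{\sing-1},\sob)$- and $(1;\tfrac{\sob+2-\sing}{2-\sing},\tfrac{\sob+2-\sing}{\sob})$-H\"older inequalities \eqref{eq:holder} respectively yields \eqref{eq:sT:holder-continuity} and \eqref{eq:sT:strong-monotonicity}, after recognizing $\|\b a\|_{L^\sob(\partial T,\R^d)}^\sob = \cst{s}_T(\bu u_T,\bu u_T)$ and similarly for $\b b$ and $\b d$.

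The main obstacle is the $\gtrsim$ half of the norm equivalence in \Sref{ass:sT:stability.boundedness}: controlling $h_F^{1-\sob}\|\b v_F-\b v_T\|_{L^\sob(F,\R^d)}^\sob$ by the reconstruction and the boundary residual requires carefully splitting $\b v_F-\b v_T = (\b v_F - \drec{k+1}{T}\bu v_T) - (\b v_T - \drec{k+1}{T}\bu v_T)$ and balancing the $L^\sob$ trace contributions of $\drec{k+1}{T}\bu v_T$ against $\dgrads{k}{T}\bu v_T$. Everything else consists of fairly mechanical applications of the tools above.
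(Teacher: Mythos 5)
Your proposal follows essentially the same route as the paper: polynomial consistency \Sref{ass:sT:polynomial.consistency} via the identity $\drec{k+1}{T}(\I{T}{k}\b w)=\b w$ on $\Poly^{k+1}(T,\R^d)$, the norm equivalence \Sref{ass:sT:stability.boundedness} deferred to the known Hilbertian/$L^\sob$ results (exactly as the paper does by citation), and \Sref{ass:sT:holder-continuity}--\Sref{ass:sT:strong-monotonicity} from the pointwise power-framed inequalities \eqref{eq:rho.holder.strong} for $\b\xi\mapsto|\b\xi|^{\sob-2}\b\xi$ combined with the same $(1;\frac{\sob}{\sob-\sing},\frac{\sob}{\sing-1},\sob)$- and $(1;\frac{\sob+2-\sing}{2-\sing},\frac{\sob+2-\sing}{\sob})$-H\"older inequalities. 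The argument is correct and at the same level of detail as the paper's own proof.
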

\begin{proof}
  The proof of \eqref{eq:sT:stability.boundedness} for $\sob=2$ is given in \cite[Eq. (25)]{Botti.Di-Pietro.ea:17}. The result can be generalized to $\sob\neq 2$ using the same arguments of \cite[Lemma 5.2]{Di-Pietro.Droniou:17}.
  Property \eqref{eq:sT:polynomial.consistency} is an immediate consequence of the fact that $\bdfbres{k}{\partial T}(\bI{T}{k}\b w) = \b 0$ for any $\b w\in\Poly^{k+1}(T,\R^d)$, which can be proved reasoning as in \cite[Proposition 2.6]{Di-Pietro.Droniou:20}.
  
  Let us prove \eqref{eq:sT:holder-continuity}. First, we remark that, since the function $\alpha \mapsto \alpha^{\sob-2}$ verifies the conditions in \eqref{eq:1d.power-framed:eta} below, we can apply Theorem \ref{thm:1d.power-framed} to infer that the function $\R^d\ni\b x \mapsto |\b x|^{\sob-2}\b x$ satisfies for all $\b x,\b y \in \R^d$,
  \begin{subequations}\label{eq:rho.holder.strong}
    \begin{gather}
      \big|
      |\b x|^{\sob-2}\b x-|\b y|^{\sob-2}\b y
      \big| \lesssim \big(
      |\b x|^\sob+|\b y|^\sob
      \big)^\frac{\sob-\sobs}{\sob}|\b x-\b y|^{\sobs-1},\label{eq:rho:s.holder.continuity}
      \\
      \big(
      |\b x|^{\sob-2}\b x-|\b y|^{\sob-2}\b y
      \big) \cdot (\b x-\b y) \big(
      |\b x|^\sob+|\b y|^\sob
      \big)^\frac{2-\sobs}{\sob} \gtrsim |\b x-\b y|^{\sob+2-\sobs}.\label{eq:rho:s.strong.monotonicity}
    \end{gather}
  \end{subequations}
  Recalling \eqref{eq:sT}, we can write
  \[
  \begin{aligned}
    \left|\cst{s}_T(\bu u_T,\bu v_T)-\cst{s}_T(\bu w_T,\bu v_T)\right| &\leq  \int_{\partial T} \left||\bdfbres{k}{\partial T} \bu u_T|^{\sob-2}\bdfbres{k}{\partial T} \bu u_T-|\bdfbres{k}{\partial T} \bu w_T|^{\sob-2}\bdfbres{k}{\partial T} \bu w_T\right|| \bdfbres{k}{\partial T} \bu v_T|\\
    &\lesssim  \int_{\partial T} \left(|\bdfbres{k}{\partial T} \bu u_T|^\sob+|\bdfbres{k}{\partial T} \bu w_T|^\sob\right)^\frac{\sob-\sobs}{\sob}| \bdfbres{k}{\partial T} \bu e_T|^{\sobs-1}| \bdfbres{k}{\partial T} \bu v_T|\\
    &\le \left(\cst{s}_T(\bu u_T,\bu u_T)+\cst{s}_T(\bu w_T,\bu w_T)\right)^\frac{\sob-\sobs}{\sob}\cst{s}_T(\bu e_T,\bu e_T)^\frac{\sobs-1}{\sob}\cst{s}_T(\bu v_T,\bu v_T)^\frac{1}{\sob},
  \end{aligned}
  \]
  where we have used \eqref{eq:rho:s.holder.continuity} to pass to the second line and the $(1;\frac{\sob}{\sob-\sobs},\frac{\sob}{\sobs-1},\sob)$-H\"{o}lder inequality to conclude.
  
  Moving to \eqref{eq:sT:strong-monotonicity}, \eqref{eq:rho:s.strong.monotonicity} and the $(1;\frac{\sob+2-\sobs}{2-\sobs},\frac{\sob+2-\sobs}{\sob})$-H\"{o}lder inequality yield
  \[
  \begin{aligned}
    &\cst{s}_T(\bu e_T,\bu e_T) \\
    &\quad = \int_{\partial T} |\bdfbres{k}{\partial T}\bu u_T-\bdfbres{k}{\partial T}\bu w_T|^\sob \\
    &\quad
    \lesssim \int_{\partial T} \left(|\bdfbres{k}{\partial T} \bu u_T|^\sob+|\bdfbres{k}{\partial T} \bu w_T|^\sob\right)^\frac{2-\sobs}{\sob+2-\sobs}\left[
      \left(|\bdfbres{k}{\partial T}\bu u_T|^{\sob-2}\bdfbres{k}{\partial T}\bu u_T-|\bdfbres{k}{\partial T}\bu w_T|^{\sob-2}\bdfbres{k}{\partial T}\bu w_T\right)\cdot \bdfbres{k}{\partial T}\bu e_T
      \right]^\frac{\sob}{\sob+2-\sobs}\\
    &\quad
    \le \left( \cst{s}_T(\bu u_T,\bu u_T)+\cst{s}_T(\bu w_T,\bu w_T)\right)^\frac{2-\sobs}{\sob+2-\sobs}\left(\cst{s}_T(\bu u_T,\bu e_T)-\cst{s}_T(\bu w_T,\bu e_T)\right)^\frac{\sob}{\sob+2-\sobs}.\qedhere
  \end{aligned}
  \]
\end{proof}

\subsection{Pressure-velocity coupling}

For all $T \in \T_h$, we define the local divergence reconstruction $\ddiv{k}{T} : \bdU{T}{k} \to \Poly^{k}(T,\R)$ by setting, for all $\bu v_T \in \bdU{T}{k}$, $\ddiv{k}{T}\bu v_T \coloneq \cst{tr}(\dgrads{k}{T}\bu v_T)$.
We have the following characterization of $\ddiv{k}{T}$:
For all $\bu v_T \in \bdU{T}{k}$,
\begin{equation}\label{eq:D}
  \int_T \ddiv{k}{T} \bu v_T~ q = \int_T (\div \b v_T)~q
  + \sum_{F \in \F_T} \int_F (\b v_F-\b v_T)\cdot \b n_{TF}~q \qquad \forall q \in  \Poly^{k}(T,\R),
\end{equation}
as can be checked writing \eqref{eq:G} for $\b\tau = q\mathrm{I}_d$.
Taking the trace of \eqref{eq:G:proj}, it is inferred that, for all $T\in\T_h$ and all $\b v\in W^{1,1}(T,\R^d)$,
$\ddiv{k}{T} (\bI{T}{k} \b v) = \proj{T}{k}(\div \b v)$.
The pressure-velocity coupling is realized by the bilinear form $\cst{b}_h : \bdU{h}{k} \times \Poly^k(\T_h,\R) \to \R$ such that, for all $(\bu v_h,q_h) \in \bdU{h}{k} \times \Poly^k(\T_h,\R)$, setting $q_T\coloneq (q_h)\res{T}$ for all $T\in\T_h$,
\begin{equation}\label{eq:bh}
  \cst{b}_h(\bu v_h,q_h) \coloneqq -\sum_{T \in \T_h}\int_T  \ddiv{k}{T} \bu v_T~ q_T.
\end{equation}

\subsection{Discrete problem and main results}\label{sec:discrete.problem.main.results}

The discrete problem reads: Find $(\bu u_h,p_h) \in \bdU{h,0}{k} \times \dP{h}{k}$ such that
\begin{subequations}\label{eq:stokes.discrete}
  \begin{alignat}{2}
    \label{eq:stokes.discrete:momentum} \cst{a}_h(\bu u_h,\bu v_h) + \cst{b}_h(\bu v_h,p_h) &= \displaystyle\int_\Omega \b f \cdot \b v_h &\qquad& \forall \bu v_h \in \bdU{h,0}{k}, \\
    \label{eq:stokes.discrete:mass} -\cst{b}_h(\bu u_h,q_h) &= 0 &\qquad& \forall q_h \in \dP{h}{k}. 
  \end{alignat}
\end{subequations}
%
\begin{remark}[Discrete mass equation]
The space of test functions in \eqref{eq:stokes.discrete:mass} can be extended to $\Poly^k(\T_h,\R)$ since, for all $\bu v_h \in \bdU{h,0}{k}$, the divergence theorem together with the fact that $\b v_F = \b 0$ for all $F \in \Fb$ and $\sum_{T\in \T_F} \int_F \b v_F \cdot \b n_{TF} = 0$ for all $F \in \Fi$, yield 
  \[
  \cst{b}_h(\bu v_h,1) = -\sum_{T \in \T_h} \sum_{F \in \F_T} \int_F \b v_F \cdot \b n_{TF} = -\sum_{F \in \Fi}\sum_{T \in \T_F}  \int_F \b v_F \cdot \b n_{TF} = 0.
  \]
\end{remark}
\begin{remark}[Efficient implementation]
  When solving the system of nonlinear algebraic equations corresponding to \eqref{eq:stokes.discrete} by, e.g., the Newton algorithm, all element-based velocity unknowns and all but one pressure unknown per element can be locally eliminated at each iteration by static condensation.
  As all the computations are local, this procedure is an embarrassingly parallel task which can fully benefit from multi-thread and multi-processor architectures.
  This implementation strategy has been described for the linear Stokes problem in \cite[Section 6.2]{Di-Pietro.Ern.ea:16}.
  After further eliminating the boundary unknowns by strongly enforcing the boundary condition \eqref{eq:stokes.continuous:bc}, we end up solving, at each iteration of the nonlinear solver, a linear system of size
  $d\cst{card}(\Fi){k+d-1\choose d-1} + \cst{card}(\T_h)$.
  Concerning the interplay between the static condensation strategy and the performance of $p$-multilevel linear solvers, we refer to \cite{Botti.Di-Pietro:21}.
\end{remark}

In what follows, we state the main results for the HHO scheme \eqref{eq:stokes.discrete}. The proofs are postponed to Section \ref{sec:analysis}.

\begin{theorem}[Well-posedness]\label{thm:well-posedness}
  There exists a unique solution $(\bu u_h,p_h) \in \bdU{h,0}{k} \times \dP{h}{k}$ to the discrete problem \eqref{eq:stokes.discrete}.
  Additionally, the following a priori bounds hold:
  \begin{subequations}\label{eq:discrete.solution:bounds}
    \begin{align}
      \| \bu u_h \|_{\sob,h} &\lesssim \left(\sigma_\cst{sm}^{-1}\| \b f \|_{L^{\sob'}(\Omega,\R^d)}\right)^\frac{1}{\sob-1}+\left(\sigma_\cst{de}^{2-\sobs}\sigma_\cst{sm}^{-1}\| \b f \|_{L^{\sob'}(\Omega,\R^d)}\right)^\frac{1}{\sob+1-\sobs}, \label{eq:discrete.solution:bounds:uh}\\
      \| p_h \|_{L^{\sob'}(\Omega,\R)} &\lesssim \sigma_\cst{hc}\left(\sigma_\cst{sm}^{-1}\| \b f \|_{L^{\sob'}(\Omega,\R^d)}+\sigma_\cst{de}^{|\sob-2|(\sobs-1)}\left(\sigma_\cst{sm}^{-1}\| \b f \|_{L^{\sob'}(\Omega,\R^d)}\right)^\frac{\sobs-1}{\sob+1-\sobs}\right). \label{eq:discrete.solution:bounds:ph}
    \end{align}
  \end{subequations}
\end{theorem}

\begin{proof}
  See Section \ref{sec:analysis:well-posedness}.
\end{proof}

\begin{theorem}[Error estimate]\label{thm:error.estimate}
  Let $(\b u,p) \in \b U \times P$ and $(\bu u_h,p_h) \in \bdU{h,0}{k} \times \dP{h}{k}$ solve \eqref{eq:stokes.weak} and \eqref{eq:stokes.discrete}, respectively. Assume the additional regularity $\b u \in W^{k+2,\sob}(\T_h,\R^d)$, $\stress(\cdot,\GRADs \b u) \in W^{1,\sob'}(\Omega,\Ms{d}) \cap W^{(k+1)(\sobs-1),\sob'}(\T_h,\Ms{d})$, and $p \in W^{1,\sob'}(\Omega,\R) \cap W^{(k+1)(\sobs-1),\sob'}(\T_h,\R)$.
  Then, under Assumptions \ref{ass:stress} and \ref{ass:sT},
  \begin{subequations}\label{eq:error.estimate}
    \begin{align}\label{eq:error.estimate:velocity}
      \| \bu u_h - \bI{h}{k} \b u \|_{\sob,h}
      &\lesssim 
      h^\frac{(k+1)(\sobs-1)}{\sob+1-\sobs}      
      \left(\sigma_\cst{sm}^{-1}\mathcal N_{\b f}^{2-\sobs}\mathcal N_{\stress,\b u,p}\right)^\frac{1}{\sob+1-\sobs},
      \\
      \label{eq:error.estimate:pressure}
      \| p_h - \proj{h}{k} p \|_{L^{\sob'}(\Omega,\R)}
      &\lesssim
        h^{(k+1)(\sobs-1)}\mathcal N_{\stress,\b u,p}
        + 
        h^{\frac{(k+1)(\sobs-1)^2}{\sob+1-\sobs}} \sigma_\cst{hc}\mathcal N_{\b f}^{|\sob-2|(\sobs-1)}
        \left(\sigma_\cst{sm}^{-1}\mathcal N_{\stress,\b u,p}\right)^\frac{\sobs-1}{\sob+1-\sobs},
    \end{align}
  \end{subequations}
  where we have set, for the sake of brevity,
  \[
  \begin{aligned}
    \mathcal N_{\stress,\b u,p}
    &\coloneq
    \sigma_\cst{hc}\left(\sigma_\cst{de}^\sob+|\b u|_{W^{1,\sob}(\Omega,\R^d)}^\sob\right)^\frac{\sob-\sobs}{\sob}|\b u|_{W^{k+2,\sob}(\T_h,\R^d)}^{\sobs-1}
    \\
    &\quad
    + |\stress(\cdot,\GRADs \b u)|_{W^{(k+1)(\sobs-1),\sob'}(\T_h,\M{d})}
    + |p|_{W^{(k+1)(\sobs-1),\sob'}(\T_h,\R)},
    \\
    \mathcal N_{\b f} &\coloneqq \sigma_\cst{de}+\left(\sigma_\cst{sm}^{-1}\| \b f \|_{L^{\sob'}(\Omega,\R^d)}\right)^\frac{1}{\sob-1}+\left(\sigma_\cst{de}^{2-\sobs}\sigma_\cst{sm}^{-1}\| \b f \|_{L^{\sob'}(\Omega,\R^d)}\right)^\frac{1}{\sob+1-\sobs}.
  \end{aligned}
  \]
\end{theorem}

\begin{proof}
  See Section \ref{sec:analysis:error.estimate}.
\end{proof}

\begin{remark}[Orders of convergence]\label{rem:ocv}
  From \eqref{eq:error.estimate}, neglecting higher-order terms, we infer asymptotic convergence rates of $\mathcal O_\cst{vel}^k \coloneqq \frac{(k+1)(\sobs-1)}{\sob+1-\sobs}$ for the velocity and $\mathcal O_\cst{pre}^k \coloneqq \frac{(k+1)(\sobs-1)^2}{\sob+1-\sobs}$ for the pressure, that is,
  \begin{equation}\label{eq:asymptotic.order}
    \mathcal O_\cst{vel}^k
    = \begin{cases}
      (k+1)(r-1) & \text{if $r<2$}, \\
      \tfrac{k+1}{r-1} & \text{if $r\ge 2$,}
    \end{cases}\quad \text{and} \quad 
    \mathcal O_\cst{pre}^k
      = \begin{cases}
        (k+1)(r-1)^2 & \text{if $r<2$}, \\
        \tfrac{k+1}{r-1} & \text{if $r\ge 2$.}
      \end{cases}
  \end{equation}
  Notice that, owing to the presence of higher-order terms in the right-hand sides of \eqref{eq:error.estimate}, higher convergence rates may be observed before attaining the asymptotic ones; see Section \ref{sec:num.res}.
  The asymptotic order of convergence for the velocity coincides with the one proved in \cite[Theorem 3.2]{Di-Pietro.Droniou:17*1} for HHO discretizations of scalar Leray--Lions problems.
  We refer to \cite{Di-Pietro.Droniou.ea:21} for recent improvements on these estimates depending on the degeneracy of the problem.
\end{remark}


\section{Numerical examples}\label{sec:num.res}

In this section, we evaluate the numerical performance of the HHO method on a complete panel of numerical test cases.
We focus on the $(\mu,0,1,\sob)$-Carreau--Yasuda law \eqref{eq:Carreau--Yasuda} (corresponding to the power-law model) with values of the exponent $\sob$ ranging from $1.25$ to $2.75$.  
Our implementation relies on the SpaFEDTe library (cf. \url{https://spafedte.github.io}). 

\subsection{Trigonometric solution}\label{sec:trigonometric}

We begin by considering a manufactured solution to problem \eqref{eq:stokes.continuous} in order to assess the convergence of the method.
We take $\Omega=(0,1)^2$ and exact velocity $\b u$ and pressure $p$ given by, respectively,
\[
  \b u(x_1,x_2) = \left(\sin\left(\tfrac{\pi}{2}x_1\right)\cos\left(\tfrac{\pi}{2}x_2\right),-\cos\left(\tfrac{\pi}{2}x_1\right)\sin\left(\tfrac{\pi}{2}x_2\right)\right),\quad 
  p(x_1,x_2) = \sin\left(\tfrac{\pi}{2} x_1\right)\sin\left(\tfrac{\pi}{2} x_2\right)-\tfrac{4}{\pi^2}.
  \]
The volumetric load $\b f$ and the Dirichlet boundary condition are inferred from the exact solution. Considering $\mu=1$ and $\sob\in\{1.5, 1.75, \dots, 2.75\}$, this solution matches the assumptions required in Theorem \ref{thm:error.estimate} for $k = 1$, except the case $r = 1.5$ for which $\stress(\cdot,\GRADs \b u) \notin W^{1,\sob'}(\Omega,\Ms{d})$.
We consider the HHO scheme for $k=1$ on three mesh families, namely Cartesian orthogonal, distorted triangular, and distorted Cartesian; see Figure \ref{fig:meshes}.
Overall, the results are in agreement with the theoretical predictions, and in some cases the expected asymptotic orders of convergence are exceeded.
Specifically, for $\sob \neq 2$, the convergence rates computed on the last refinement surpass in some cases the theoretical ones.
As noticed in Remark \ref{rem:ocv}, this suggests that the asymptotic order is still not attained.
A similar phenomenon has been observed on certain meshes for the $p$-Laplace problem; see \cite[Section 3.5.2]{Di-Pietro.Droniou:17*1} and \cite[Section 3.7]{Di-Pietro.Droniou.ea:18}.
  In some cases, we observe a better convergence for the velocity on distorted triangular meshes than on Cartesian meshes.
  This phenomenon possibly results from the combination of two factors:
  on one hand, the improved robustness of HHO methods with respect to elongated elements when compared to classical discretization methods;
  on the other hand, the fact that unstructured triangular meshes have more elements than Cartesian meshes for a given meshsize and lack privileged directions, which reduces mesh bias.
  Further investigation is postponed to a future work.

\begin{figure}\centering
  \includegraphics[scale=0.3]{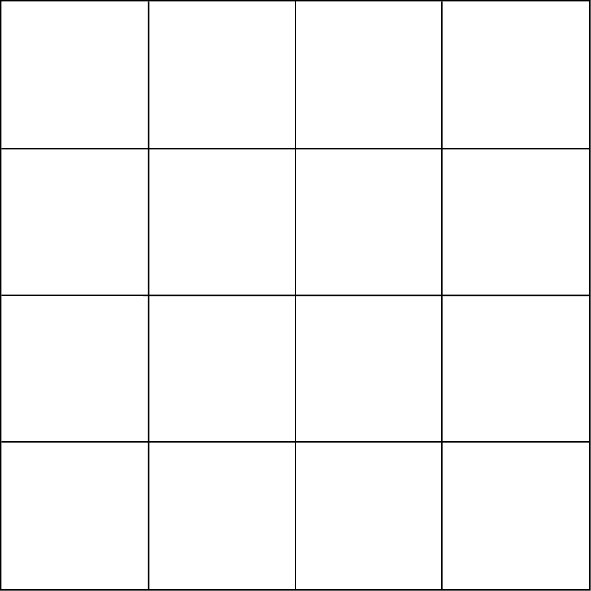} \qquad
  \includegraphics[scale=0.3]{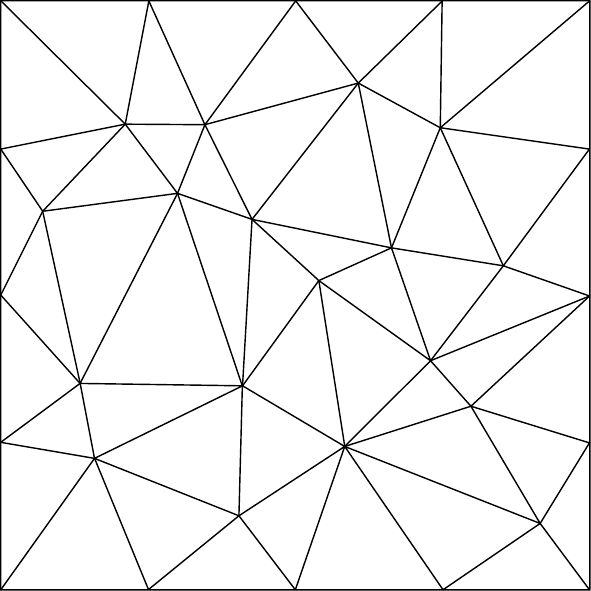} \qquad
  \includegraphics[scale=0.3]{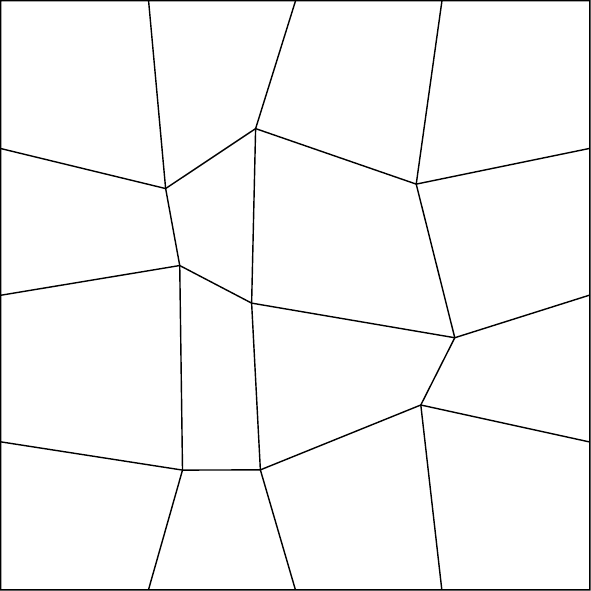}
  \caption{Coarsest Cartesian, distorted triangular, and distorted Cartesian meshes used in Section \ref{sec:num.res}.\label{fig:meshes}} 
\end{figure}

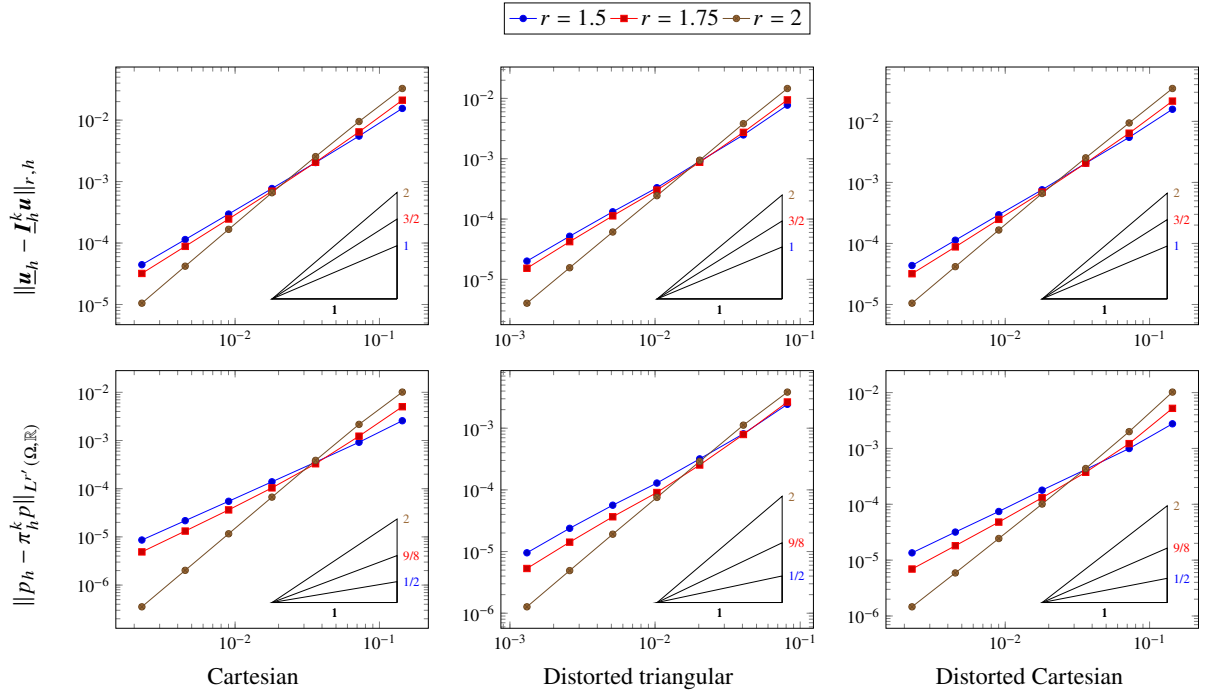
\begin{figure}
  \begin{center}
    \begin{minipage}[b]{0.04\columnwidth}
      \begin{tikzpicture}
      	\draw node[scale=0.8,rotate=90]{\hspace*{0.7cm} $\| \bu u_h - \bI{h}{k} \b u \|_{\sob,h}$};
      \end{tikzpicture}
    \end{minipage}
    \hfill
    \begin{minipage}[b]{0.3\columnwidth}
      \begin{tikzpicture}[scale=0.6]
        \begin{loglogaxis}
          \addplot table[x=meshsize,y=err_u] {results/trigo/square_k1_r150.dat};
          \addplot table[x=meshsize,y=err_u] {results/trigo/square_k1_r175.dat};
          \addplot table[x=meshsize,y=err_u] {results/trigo/square_k1_r200.dat};
          \logLogSlopeTriangle{0.90}{0.4}{0.1}{1}{blue};
          \logLogSlopeTriangle{0.90}{0.4}{0.1}{3/2}{red};
          \logLogSlopeTriangle{0.90}{0.4}{0.1}{2}{darkbrown};
        \end{loglogaxis}
      \end{tikzpicture}
    \end{minipage}
    \hfill
    \begin{minipage}[b]{0.30\columnwidth}
      \begin{tikzpicture}[scale=0.6]
        \begin{loglogaxis}[
            legend style = { 
              at={(0.5,1.25)},
              anchor = north,
              legend columns=-1,
              font={\Large}
            }
          ]
          \addplot table[x=meshsize,y=err_u] {results/trigo/disttriangle_k1_r150.dat};
          \addplot table[x=meshsize,y=err_u] {results/trigo/disttriangle_k1_r175.dat};
          \addplot table[x=meshsize,y=err_u] {results/trigo/disttriangle_k1_r200.dat};
          \logLogSlopeTriangle{0.90}{0.4}{0.1}{1}{blue};
          \logLogSlopeTriangle{0.90}{0.4}{0.1}{3/2}{red};
          \logLogSlopeTriangle{0.90}{0.4}{0.1}{2}{darkbrown};
          \legend{$r=1.5$ , $r=1.75$ , $r=2$};
        \end{loglogaxis}
      \end{tikzpicture}
    \end{minipage}
    \hfill
    \begin{minipage}[b]{0.31\columnwidth}
      \begin{tikzpicture}[scale=0.6]
        \begin{loglogaxis}
          \addplot table[x=meshsize,y=err_u] {results/trigo/distsquare_k1_r150.dat};
          \addplot table[x=meshsize,y=err_u] {results/trigo/distsquare_k1_r175.dat};
          \addplot table[x=meshsize,y=err_u] {results/trigo/distsquare_k1_r200.dat};
          \logLogSlopeTriangle{0.90}{0.4}{0.1}{1}{blue};
          \logLogSlopeTriangle{0.90}{0.4}{0.1}{3/2}{red};
          \logLogSlopeTriangle{0.90}{0.4}{0.1}{2}{darkbrown};
        \end{loglogaxis}
      \end{tikzpicture}
    \end{minipage}
  \\
  \medskip
    \begin{minipage}[b]{0.04\columnwidth}
      \begin{tikzpicture}
      	\draw node[scale=0.8,rotate=90]{\hspace*{1.3cm}$\| p_h - \proj{h}{k} p \|_{L^{\sob'}(\Omega,\R)}$};
      \end{tikzpicture}
    \end{minipage}
    \hfill
    \begin{minipage}[b]{0.3\columnwidth}
      \begin{tikzpicture}[scale=0.6]
        \begin{loglogaxis}
          \addplot table[x=meshsize,y=err_p] {results/trigo/square_k1_r150.dat};
          \addplot table[x=meshsize,y=err_p] {results/trigo/square_k1_r175.dat};
          \addplot table[x=meshsize,y=err_p] {results/trigo/square_k1_r200.dat};
          \logLogSlopeTriangle{0.90}{0.4}{0.1}{1/2}{blue};
          \logLogSlopeTriangle{0.90}{0.4}{0.1}{9/8}{red};
          \logLogSlopeTriangle{0.90}{0.4}{0.1}{2}{darkbrown};
        \end{loglogaxis}
      \end{tikzpicture}\vspace*{-0.1cm}
      \subcaption*{Cartesian}
    \end{minipage}
    \hfill
    \begin{minipage}[b]{0.3\columnwidth}
      \begin{tikzpicture}[scale=0.6]
        \begin{loglogaxis}
          \addplot table[x=meshsize,y=err_p] {results/trigo/disttriangle_k1_r150.dat};
          \addplot table[x=meshsize,y=err_p] {results/trigo/disttriangle_k1_r175.dat};
          \addplot table[x=meshsize,y=err_p] {results/trigo/disttriangle_k1_r200.dat};
          \logLogSlopeTriangle{0.90}{0.4}{0.1}{1/2}{blue};
          \logLogSlopeTriangle{0.90}{0.4}{0.1}{9/8}{red};
          \logLogSlopeTriangle{0.90}{0.4}{0.1}{2}{darkbrown};
        \end{loglogaxis}
      \end{tikzpicture}\vspace*{-0.1cm}
      \subcaption*{Distorted triangular}
    \end{minipage}
    \hfill
    \begin{minipage}[b]{0.31\columnwidth}
      \begin{tikzpicture}[scale=0.6]
        \begin{loglogaxis}
          \addplot table[x=meshsize,y=err_p] {results/trigo/distsquare_k1_r150.dat};
          \addplot table[x=meshsize,y=err_p] {results/trigo/distsquare_k1_r175.dat};
          \addplot table[x=meshsize,y=err_p] {results/trigo/distsquare_k1_r200.dat};
          \logLogSlopeTriangle{0.90}{0.4}{0.1}{1/2}{blue};
          \logLogSlopeTriangle{0.90}{0.4}{0.1}{9/8}{red};
          \logLogSlopeTriangle{0.90}{0.4}{0.1}{2}{darkbrown};
        \end{loglogaxis}
      \end{tikzpicture}\vspace*{-0.1cm}
      \subcaption*{Distorted Cartesian}
    \end{minipage}
  \end{center}
   \caption{Numerical results for the test case of Section \ref{sec:num.res}. The slopes indicate the order of convergence expected from Theorem \ref{thm:error.estimate}, i.e. $\mathcal O_\cst{vel}^1 = 2(r-1)$ and $\mathcal O_\cst{pre}^1  = 2(r-1)^2$ for $\sob \in \{1.5,1.75,2\}$.
   \label{tab:num.res.1}}
\end{figure}

\begin{figure}
  \begin{center}
    \begin{minipage}[b]{0.04\columnwidth}
      \begin{tikzpicture}
      	\draw node[scale=0.8,rotate=90]{\hspace*{0.7cm} $\| \bu u_h - \bI{h}{k} \b u \|_{\sob,h}$};
      \end{tikzpicture}
    \end{minipage}
    \hfill
    \begin{minipage}[b]{0.3\columnwidth}
      \begin{tikzpicture}[scale=0.6]
        \begin{loglogaxis}
          \addplot table[x=meshsize,y=err_u] {results/trigo/square_k1_r225.dat};
          \addplot table[x=meshsize,y=err_u] {results/trigo/square_k1_r250.dat};
          \addplot table[x=meshsize,y=err_u] {results/trigo/square_k1_r275.dat};
          \logLogSlopeTriangle{0.90}{0.4}{0.1}{8/5}{blue};
          \logLogSlopeTriangle{0.90}{0.4}{0.1}{4/3}{red};
          \logLogSlopeTriangle{0.90}{0.4}{0.1}{8/7}{darkbrown};
        \end{loglogaxis}
      \end{tikzpicture}
    \end{minipage}
    \hfill
    \begin{minipage}[b]{0.30\columnwidth}
      \begin{tikzpicture}[scale=0.6]
        \begin{loglogaxis}[
            legend style = {
              at={(0.5,1.25)},
              anchor = north,
              legend columns=-1,
              font={\Large}
            }
          ]
          \addplot table[x=meshsize,y=err_u] {results/trigo/disttriangle_k1_r225.dat};
          \addplot table[x=meshsize,y=err_u] {results/trigo/disttriangle_k1_r250.dat};
          \addplot table[x=meshsize,y=err_u] {results/trigo/disttriangle_k1_r275.dat};
          \logLogSlopeTriangle{0.90}{0.4}{0.1}{8/5}{blue};
          \logLogSlopeTriangle{0.90}{0.4}{0.1}{4/3}{red};
          \logLogSlopeTriangle{0.90}{0.4}{0.1}{8/7}{darkbrown};
          \legend{$r=2.25$ , $r=2.5$ , $r=2.75$};
        \end{loglogaxis}
      \end{tikzpicture}
    \end{minipage}
    \hfill
    \begin{minipage}[b]{0.31\columnwidth}
      \begin{tikzpicture}[scale=0.6]
        \begin{loglogaxis}
          \addplot table[x=meshsize,y=err_u] {results/trigo/distsquare_k1_r225.dat};
          \addplot table[x=meshsize,y=err_u] {results/trigo/distsquare_k1_r250.dat};
          \addplot table[x=meshsize,y=err_u] {results/trigo/distsquare_k1_r275.dat};
          \logLogSlopeTriangle{0.90}{0.4}{0.1}{8/5}{blue};
          \logLogSlopeTriangle{0.90}{0.4}{0.1}{4/3}{red};
          \logLogSlopeTriangle{0.90}{0.4}{0.1}{8/7}{darkbrown};
        \end{loglogaxis}
      \end{tikzpicture}
    \end{minipage}
  \\
  \medskip
    \begin{minipage}[b]{0.04\columnwidth}
      \begin{tikzpicture}
      	\draw node[scale=0.8,rotate=90]{\hspace*{1.3cm}$\| p_h - \proj{h}{k} p \|_{L^{\sob'}(\Omega,\R)}$};
      \end{tikzpicture}
    \end{minipage}
    \hfill
    \begin{minipage}[b]{0.3\columnwidth}
      \begin{tikzpicture}[scale=0.6]
        \begin{loglogaxis}
          \addplot table[x=meshsize,y=err_p] {results/trigo/square_k1_r225.dat};
          \addplot table[x=meshsize,y=err_p] {results/trigo/square_k1_r250.dat};
          \addplot table[x=meshsize,y=err_p] {results/trigo/square_k1_r275.dat};
          \logLogSlopeTriangle{0.90}{0.4}{0.1}{8/5}{blue};
          \logLogSlopeTriangle{0.90}{0.4}{0.1}{4/3}{red};
          \logLogSlopeTriangle{0.90}{0.4}{0.1}{8/7}{darkbrown};
        \end{loglogaxis}
      \end{tikzpicture}\vspace*{-0.1cm}
      \subcaption*{Cartesian}
    \end{minipage}
    \hfill
    \begin{minipage}[b]{0.3\columnwidth}
      \begin{tikzpicture}[scale=0.6]
        \begin{loglogaxis}
          \addplot table[x=meshsize,y=err_p] {results/trigo/disttriangle_k1_r225.dat};
          \addplot table[x=meshsize,y=err_p] {results/trigo/disttriangle_k1_r250.dat};
          \addplot table[x=meshsize,y=err_p] {results/trigo/disttriangle_k1_r275.dat};
          \logLogSlopeTriangle{0.90}{0.4}{0.1}{8/5}{blue};
          \logLogSlopeTriangle{0.90}{0.4}{0.1}{4/3}{red};
          \logLogSlopeTriangle{0.90}{0.4}{0.1}{8/7}{darkbrown};
        \end{loglogaxis}
      \end{tikzpicture}\vspace*{-0.1cm}
      \subcaption*{Distorted triangular}
    \end{minipage}
    \hfill
    \begin{minipage}[b]{0.31\columnwidth}
      \begin{tikzpicture}[scale=0.6]
        \begin{loglogaxis}
          \addplot table[x=meshsize,y=err_p] {results/trigo/distsquare_k1_r225.dat};
          \addplot table[x=meshsize,y=err_p] {results/trigo/distsquare_k1_r250.dat};
          \addplot table[x=meshsize,y=err_p] {results/trigo/distsquare_k1_r275.dat};
          \logLogSlopeTriangle{0.90}{0.4}{0.1}{8/5}{blue};
          \logLogSlopeTriangle{0.90}{0.4}{0.1}{4/3}{red};
          \logLogSlopeTriangle{0.90}{0.4}{0.1}{8/7}{darkbrown};
        \end{loglogaxis}
      \end{tikzpicture}\vspace*{-0.1cm}
      \subcaption*{Distorted Cartesian}
    \end{minipage}
  \end{center}
   \caption{Numerical results for the test case of Section \ref{sec:trigonometric}. The slopes indicate the order of convergence expected from Theorem \ref{thm:error.estimate}, i.e. $\mathcal O_\cst{vel}^1 = \mathcal O_\cst{pre}^1 = \frac{2}{r-1}$ for $\sob \in \{2.25,2.5,2.75\}$.
   \label{tab:num.res.2}}
\end{figure}
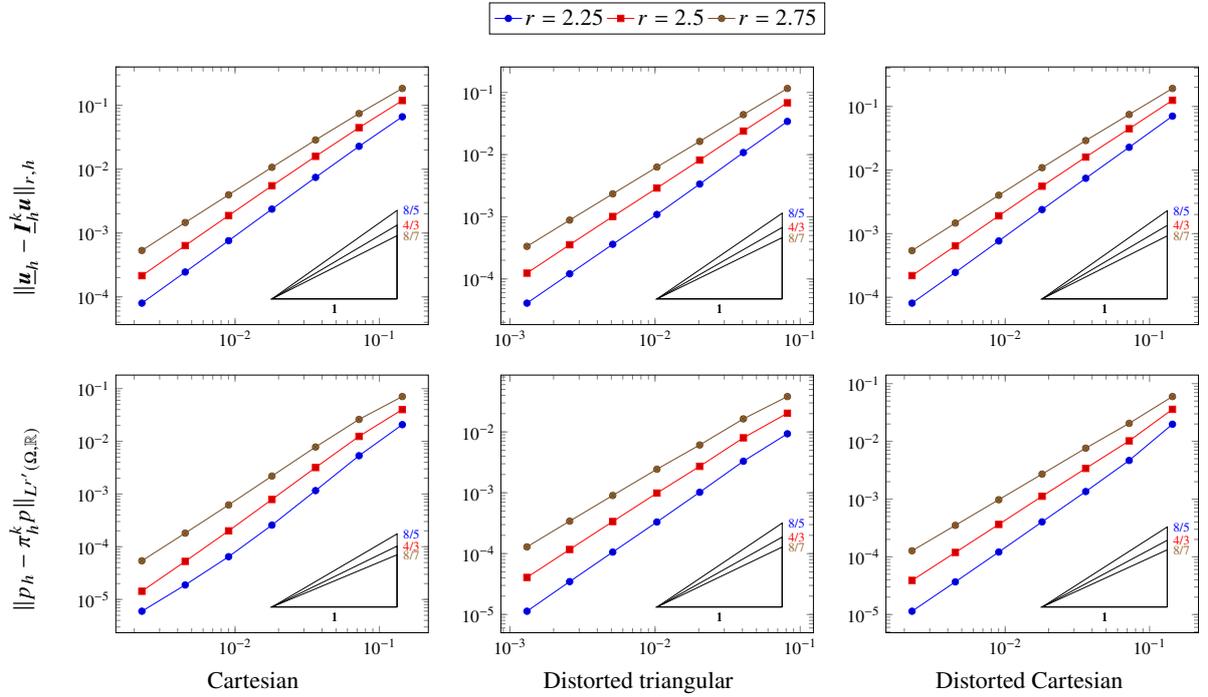

\subsection{Lid-driven cavity flow}\label{sec:cavity}

We next consider the lid-driven cavity flow, a well-known problem in fluid mechanics.
The domain is the unit square $\Omega=(0,1)^2$, and we enforce a unit tangential velocity $\boldsymbol{u}=(1,0)$ on the top edge (of equation $x_2=1$) and wall boundary conditions on the other edges.
This boundary condition is incompatible with the formulation \eqref{eq:stokes.weak}, even generalized to non-homogeneous boundary conditions, since $\b u \notin W^{1,\sob}(\Omega,\R^d)$.
However, this is a very classical test that demonstrates the quality of the method. 
We consider a low Reynolds number $\cst{Re} \coloneq \frac{2}{\mu} = 1$. 
For $r\in\{1.25,2,2.75\}$, we solve the discrete problem on Cartesian and distorted triangular meshes (cf. Figure \ref{fig:meshes}) of approximate size $128\times 128$ for $k=1$, and $16\times 16$ for $k=5$.
This choice is meant to compare the low-order version of the method on a fine mesh with the high-order version on a very coarse one.
The corresponding total number of degrees of freedom is: $130048$ for the fine Cartesian mesh with $k=1$; $5760$ for the coarse Cartesian mesh with $k=5$; $298676$ for the fine triangular mesh with $k=1$; and $14196$ for the coarse triangular mesh with $k=5$.
In the left column of Figure \ref{fig:cavity} we display the velocity magnitude, while in the right column we plot the horizontal component $u_1$ of the velocity along the vertical centreline $x_1=\frac12$ (resp., vertical component $u_2$ along the horizontal centreline $x_2=\frac12$).
The lines corresponding to $k=1$ on the fine mesh and to $k=5$ on the coarse mesh are perfectly superimposed, regardless of the mesh family and of the value of $r$.
This shows that, despite the lack of regularity of the exact solution, high-order versions of the scheme on very coarse meshes deliver similar results as low-order versions on very fine grids.
Furthermore, we observe significant differences in the behavior of the flow according to $r$, coherent with the expected physical behavior.
In particular, the viscous effects increase with $r$, as reflected by the size of the central vortex.

\begin{figure}
 	\begin{center}
 	  \begin{minipage}[b]{0.04\columnwidth}
      \begin{tikzpicture}
      	\draw node[scale=0.8,rotate=90]{\hspace*{2.7cm} $r=1.25$};
      \end{tikzpicture}
    \end{minipage}
 	  \begin{minipage}[b]{0.4\columnwidth}
   		\includegraphics[scale=0.28]{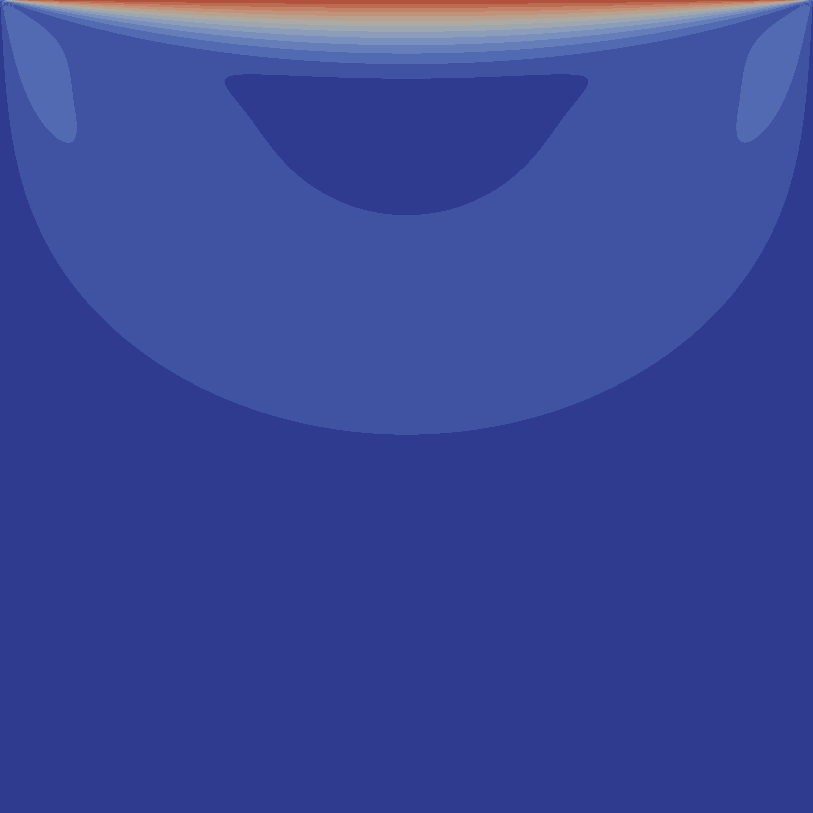}
   	\end{minipage}    
   	\begin{minipage}[b]{0.5\columnwidth}
   	  ~\vspace{-0.7cm}	
   		\begin{tikzpicture}[font=\footnotesize, %
          spy using outlines={magnification=4, size=3cm, connect spies, fill=none} %
        ]
        \begin{axis}[height=7.5cm, width=7.5cm,
            xmin=-1, xmax=1, ymin=0, ymax=1,
            xlabel={$u_1$}, ylabel={$x_2$},
            legend style = { at={(0,1)}, anchor=north west, draw=none, fill=none }, 
            axis x line=top, ytick pos=bottom,
            xlabel style={yshift=-5pt},
            ylabel style={yshift=-10pt}]
          
          \addplot +[mark=none, thick] table [x=a, y=Points:1] {results/lid_driven_cavity/disttrimesh/re1_r125_k1_128_u1.txt};
          \addplot +[mark=none, thick] table [x=a, y=Points:1] {results/lid_driven_cavity/disttrimesh/re1_r125_k5_16_u1.txt};
          \addplot +[mark=none, thick] table [x=a, y=Points:1] {results/lid_driven_cavity/squaremesh/re1_r125_k1_128_u1.txt};
          \addplot +[mark=none, thick] table [x=a, y=Points:1] {results/lid_driven_cavity/squaremesh/re1_r125_k5_16_u1.txt};

          \legend{%
            {$k=1$, $128$ tri.},
            {$k=5$, $16$ tri.},
            {$k=1$, $128$ car.},
            {$k=5$, $16$ car.}
          }
        \end{axis}
        \begin{axis}[height=7.5cm, width=7.5cm, %
            xmin=0, xmax=1, ymin=-1, ymax=1,%
            xlabel={$x_1$}, ylabel={$u_2$}, %
            axis y line=right, xtick pos=left,
            xlabel style={yshift=5pt},
            ylabel style={yshift=10pt}]
          \addplot +[mark=none, thick] table [x=Points:0, y=b] {results/lid_driven_cavity/disttrimesh/re1_r125_k1_128_u2.txt};
          \addplot +[mark=none, thick] table [x=Points:0, y=b] {results/lid_driven_cavity/disttrimesh/re1_r125_k5_16_u2.txt};
          \addplot +[mark=none, thick] table [x=Points:0, y=b] {results/lid_driven_cavity/squaremesh/re1_r125_k1_128_u2.txt};
          \addplot +[mark=none, thick] table [x=Points:0, y=b] {results/lid_driven_cavity/squaremesh/re1_r125_k5_16_u2.txt};
        \end{axis}
        
      \end{tikzpicture}
	  \end{minipage}
	\end{center}
	\begin{center}	
 	  \begin{minipage}[b]{0.04\columnwidth}
      \begin{tikzpicture}
      	\draw node[scale=0.8,rotate=90]{\hspace*{2.7cm} $r=2$};
      \end{tikzpicture}
    \end{minipage}
 	  \begin{minipage}[b]{0.4\columnwidth}
   		\includegraphics[scale=0.28]{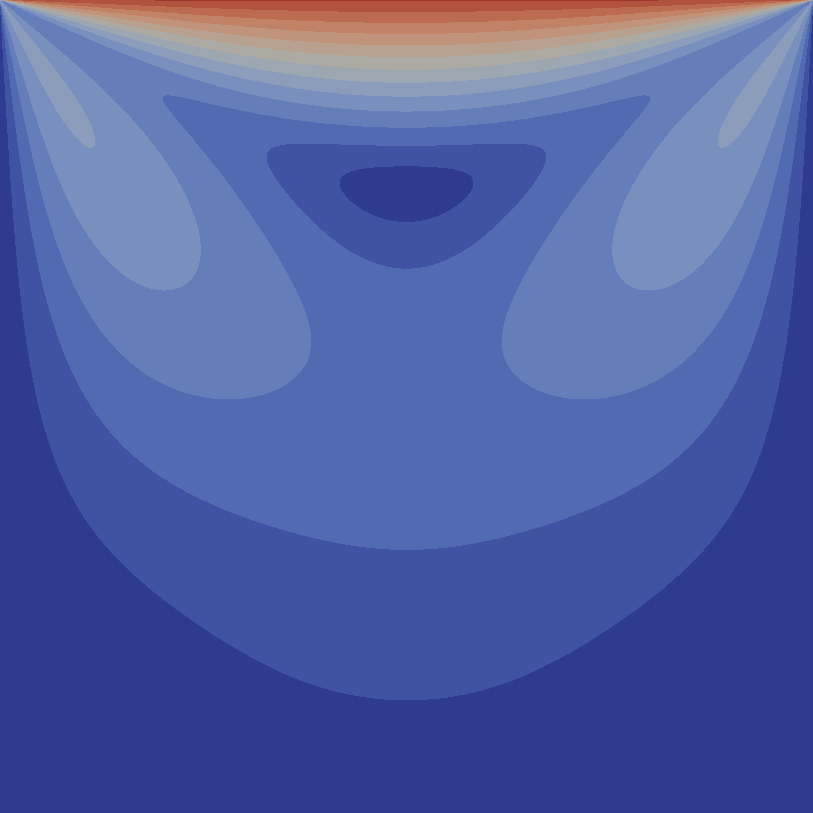}
   	\end{minipage}
   	\begin{minipage}[b]{0.5\columnwidth}
   	  ~\vspace{-0.7cm}	
      \begin{tikzpicture}[font=\footnotesize, %
          spy using outlines={magnification=4, size=3cm, connect spies, fill=none} %
        ]
        \begin{axis}[height=7.5cm, width=7.5cm,
            xmin=-1, xmax=1, ymin=0, ymax=1,
            xlabel={$u_1$}, ylabel={$x_2$},
            legend style = { at={(0,1)}, anchor=north west, draw=none, fill=none }, 
            axis x line=top, ytick pos=bottom,
            xlabel style={yshift=-5pt},
            ylabel style={yshift=-10pt}]
          
          \addplot +[mark=none, thick] table [x=a, y=Points:1] {results/lid_driven_cavity/disttrimesh/re1_r200_k1_128_u1.txt};
          \addplot +[mark=none, thick] table [x=a, y=Points:1] {results/lid_driven_cavity/disttrimesh/re1_r200_k5_16_u1.txt};
          \addplot +[mark=none, thick] table [x=a, y=Points:1] {results/lid_driven_cavity/squaremesh/re1_r200_k1_128_u1.txt};
          \addplot +[mark=none, thick] table [x=a, y=Points:1] {results/lid_driven_cavity/squaremesh/re1_r200_k5_16_u1.txt};

          \legend{%
            {$k=1$, $128$ tri.},
            {$k=5$, $16$ tri.},
            {$k=1$, $128$ car.},
            {$k=5$, $16$ car.}
          }
        \end{axis}
        \begin{axis}[height=7.5cm, width=7.5cm, %
            xmin=0, xmax=1, ymin=-1, ymax=1,%
            xlabel={$x_1$}, ylabel={$u_2$}, %
            axis y line=right, xtick pos=left,
            xlabel style={yshift=5pt},
            ylabel style={yshift=10pt}]
          \addplot +[mark=none, thick] table [x=Points:0, y=b] {results/lid_driven_cavity/disttrimesh/re1_r200_k1_128_u2.txt};
          \addplot +[mark=none, thick] table [x=Points:0, y=b] {results/lid_driven_cavity/disttrimesh/re1_r200_k5_16_u2.txt};
          \addplot +[mark=none, thick] table [x=Points:0, y=b] {results/lid_driven_cavity/squaremesh/re1_r200_k1_128_u2.txt};
          \addplot +[mark=none, thick] table [x=Points:0, y=b] {results/lid_driven_cavity/squaremesh/re1_r200_k5_16_u2.txt};
        \end{axis}

      \end{tikzpicture}
    \end{minipage}
  \end{center}
	\begin{center}	
 	  \begin{minipage}[b]{0.04\columnwidth}
      \begin{tikzpicture}
      	\draw node[scale=0.8,rotate=90]{\hspace*{2.7cm} $r=2.75$};
      \end{tikzpicture}
    \end{minipage}
 	  \begin{minipage}[b]{0.4\columnwidth}
   		\includegraphics[scale=0.28]{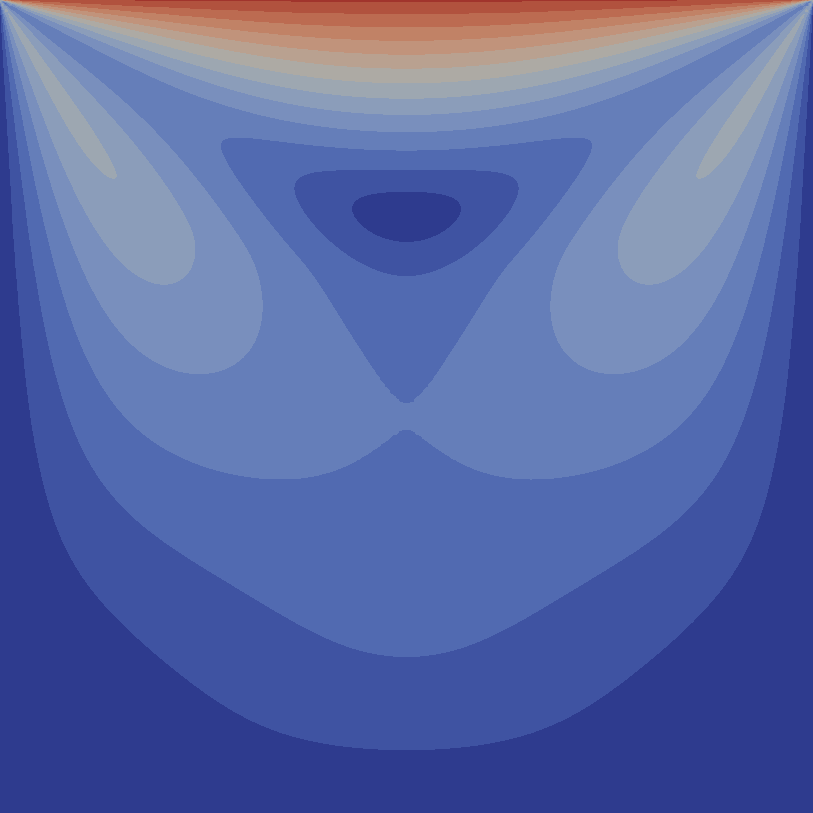}
   	\end{minipage}
   	\begin{minipage}[b]{0.5\columnwidth}
   	  ~\vspace{-0.7cm}	
   	  \begin{tikzpicture}[font=\footnotesize, %
          spy using outlines={magnification=4, size=3cm, connect spies, fill=none} %
        ]
        \begin{axis}[height=7.5cm, width=7.5cm,
            xmin=-1, xmax=1, ymin=0, ymax=1,
            xlabel={$u_1$}, ylabel={$x_2$},
            legend style = { at={(0,1)}, anchor=north west, draw=none, fill=none }, 
            axis x line=top, ytick pos=bottom,
            xlabel style={yshift=-5pt},
            ylabel style={yshift=-10pt}]
          
          \addplot +[mark=none, thick] table [x=a, y=Points:1] {results/lid_driven_cavity/disttrimesh/re1_r275_k1_128_u1.txt};
          \addplot +[mark=none, thick] table [x=a, y=Points:1] {results/lid_driven_cavity/disttrimesh/re1_r275_k5_16_u1.txt};
          \addplot +[mark=none, thick] table [x=a, y=Points:1] {results/lid_driven_cavity/squaremesh/re1_r275_k1_128_u1.txt};
          \addplot +[mark=none, thick] table [x=a, y=Points:1] {results/lid_driven_cavity/squaremesh/re1_r275_k5_16_u1.txt};

          \legend{%
            {$k=1$, $128$ tri.},
            {$k=5$, $16$ tri.},
            {$k=1$, $128$ car.},
            {$k=5$, $16$ car.}
          }
        \end{axis}
        \begin{axis}[height=7.5cm, width=7.5cm, %
            xmin=0, xmax=1, ymin=-1, ymax=1,%
            xlabel={$x_1$}, ylabel={$u_2$}, %
            axis y line=right, xtick pos=left,
            xlabel style={yshift=5pt},
            ylabel style={yshift=10pt}]
          \addplot +[mark=none, thick] table [x=Points:0, y=b] {results/lid_driven_cavity/disttrimesh/re1_r275_k1_128_u2.txt};
          \addplot +[mark=none, thick] table [x=Points:0, y=b] {results/lid_driven_cavity/disttrimesh/re1_r275_k5_16_u2.txt};
          \addplot +[mark=none, thick] table [x=Points:0, y=b] {results/lid_driven_cavity/squaremesh/re1_r275_k1_128_u2.txt};
          \addplot +[mark=none, thick] table [x=Points:0, y=b] {results/lid_driven_cavity/squaremesh/re1_r275_k5_16_u2.txt};
        \end{axis}
        
      \end{tikzpicture}
    \end{minipage}
 	\end{center}
  
  \caption{Numerical results for the test case of Section \ref{sec:cavity}: lid-driven cavity flow. \emph{Left:} velocity magnitude contours ($15$ equispaced values in the range $[0,1]$). Computations on a Cartesian mesh of size $128\times 128$ with $k=5$.
    \emph{Right:} horizontal component $u_1$ of the velocity along the vertical centreline $x_1=\frac12$ and vertical component $u_2$ of the velocity along the horizontal centreline $x_2=\frac12$. \label{fig:cavity}
  }
  \vspace{1cm}
\end{figure}


\section{Discrete Korn inequality}\label{sec:discrete.korn.inequality}

We prove in this section a discrete counterpart of the following Korn inequality (see \cite[Theorem 1]{Geymonat.Suquet:86}) that will be needed in the analysis: There is $C_\cst{K} > 0$ depending only on $\Omega$ and $r$ such that for all $\b v \in\b U$,
\begin{equation}\label{eq:Korn}
\|\b v\|_{W^{1,\sob}(\Omega,\R^d)} \le C_\cst{K} \|\GRADs \b v\|_{L^\sob(\Omega,\M{d})}.
\end{equation}
We start by recalling the following preliminary result concerning the node-averaging interpolator (sometimes called Oswald interpolator).
Let $\mathfrak T_h$ be a matching simplicial submesh of $\mathcal M_h$ in the sense of \cite[Definition 1.8]{Di-Pietro.Droniou:20}.
The node-averaging operator $\Iav{h}{k} : \Poly^k(\T_h,\R^d) \to \Poly^k(\mathfrak T_h,\R^d) \cap W^{1,\sob}(\Omega,\R^d)$ is such that, for all $\b v_h \in \Poly^k(\T_h,\R^d)$ and all Lagrange node $V$ of $\mathfrak T_h$, denoting by $\mathfrak T_V$ the set of simplices sharing $V$,
\[
(\Iav{h}{k}\b v_h)(V) \coloneq
\begin{cases}
  \frac{1}{\rm{card}(\mathfrak T_V)}\sum_{\b\tau\in\mathfrak T_V} \b v_h\res{\b\tau}(V) & \text{if }\ V \in \Omega,
  \\ \b 0 & \text{if }\ V \in \partial\Omega.
\end{cases}
\]
For all $F \in \Fi$, denote by $T_1,T_2\in\T_h$ the elements sharing $F$, taken in an arbitrary but fixed order.
We define the jump operator such that, for any function $\b v\in W^{1,1}(\T_h,\R^d)$, $[\b v]_F \coloneq (\b v\res{T_1})\res{F}-(\b v\res{T_2})\res{F}$.
This definition is extended to boundary faces $F\in\Fb$ by setting $[\b v]_F \coloneq \b v\res{F}$.
\begin{proposition}[Boundedness of the node-averaging operator]
  For all $\b v_h \in \Poly^k(\T_h,\R^d)$, it holds
  \begin{equation}\label{eq:Iav:bound}
    |\b v_h-\Iav{h}{k}\b v_h|_{W^{1,\sob}(\T_h,\R^d)}^\sob \lesssim \sum_{F\in\F_h} h_F^{1-\sob} \|[\b v_h]_F\|^\sob_{L^\sob(F,\R^d)}.
  \end{equation}
\end{proposition}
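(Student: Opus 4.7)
The plan is to pass from $\mathcal M_h$ to the matching simplicial submesh $\mathfrak T_h$, apply element-by-element inverse estimates combined with norm equivalences on polynomial spaces of bounded degree, and exploit a telescoping argument to express differences of nodal values as sums of face jumps. This is a classical strategy; the novelty here is just that powers of $\sob$ replace the $L^2$-based bounds.

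First, fix a simplex $\tau \in \mathfrak T_h$ contained in some $T_\tau \in \T_h$; mesh regularity yields $h_\tau \simeq h_{T_\tau}$, and for each face $F \in \F_h$ incident to a Lagrange node of $\tau$ also $h_F \simeq h_\tau$. Since $(\b v_h - \Iav{h}{k}\b v_h)\res{\tau}$ is a vector-valued polynomial of degree $\le k$, a standard inverse inequality followed by norm equivalence on the finite-dimensional space $\Poly^k(\tau,\R^d)$ rescaled by $h_\tau$ gives
\[
|\b v_h - \Iav{h}{k}\b v_h|_{W^{1,\sob}(\tau,\R^d)}^\sob \lesssim h_\tau^{-\sob}\|\b v_h - \Iav{h}{k}\b v_h\|_{L^\sob(\tau,\R^d)}^\sob \lesssim h_\tau^{d-\sob}\sum_{V \in \mathcal{N}_\tau} \bigl|(\b v_h - \Iav{h}{k}\b v_h)(V)\bigr|^\sob,
\]
where $\mathcal{N}_\tau$ collects the (finitely many) Lagrange nodes of $\tau$ associated with the degree-$k$ scalar finite element.

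Next, for each $V \in \mathcal{N}_\tau$ I rewrite the nodal defect as a combination of face jumps. If $V \in \Omega$, the definition of $\Iav{h}{k}$ gives
\[
(\b v_h - \Iav{h}{k}\b v_h)(V) = \frac{1}{\card(\mathfrak T_V)}\sum_{\tau'\in\mathfrak T_V}\bigl(\b v_h\res{\tau}(V) - \b v_h\res{\tau'}(V)\bigr),
\]
and mesh regularity ensures that $\tau$ and any $\tau' \in \mathfrak T_V$ can be joined by a chain of simplices in $\mathfrak T_V$ successively sharing a $(d-1)$-face, whose length is uniformly bounded. Telescoping along such a chain expresses $\b v_h\res{\tau}(V) - \b v_h\res{\tau'}(V)$ as a finite sum of pointwise jumps $[\b v_h]_F(V)$; because jumps across simplicial faces lying inside a single element $T\in\T_h$ vanish, only faces $F \in \F_h$ actually contribute. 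If $V \in \partial\Omega$, we have $\Iav{h}{k}\b v_h(V) = \b 0$, and I additionally use a chain connecting $\tau$ to a simplex adjacent to a boundary face $F \in \Fb$ containing $V$, together with the convention $[\b v_h]_F = \b v_h\res{F}$, to write $\b v_h\res{\tau}(V)$ itself as a sum of jumps $[\b v_h]_F(V)$ across faces of $\F_h$ incident to $V$. Either way,
\[
|(\b v_h - \Iav{h}{k}\b v_h)(V)|^\sob \lesssim \sum_{F \in \F_h,\, V \in \overline{F}} |[\b v_h]_F(V)|^\sob.
\]

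Finally, for each such face $F$, norm equivalence on the finite-dimensional space $\Poly^k(F,\R^d)$ combined with scaling by $h_F$ yields $|[\b v_h]_F(V)|^\sob \lesssim h_F^{-(d-1)}\|[\b v_h]_F\|_{L^\sob(F,\R^d)}^\sob$. Inserting this into the previous estimates, using $h_\tau \simeq h_F$, and summing the elementary bound $h_\tau^{d-\sob}\cdot h_F^{-(d-1)} = h_F^{1-\sob}$ over $\tau \in \mathfrak T_h$, each face $F \in \F_h$ being counted a uniformly bounded number of times by mesh regularity, produces the claim \eqref{eq:Iav:bound}. The main technical point is the combinatorial telescoping step: one must verify that any two simplices sharing a Lagrange node $V$ can be joined through a chain of face-neighbors of uniformly bounded length, and that for $V \in \partial\Omega$ such a chain can be extended to a boundary face of $\mathcal M_h$ containing $V$. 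Once this is granted, the rest is a routine combination of inverse estimates and norm equivalences on polynomial spaces.
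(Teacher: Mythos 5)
Your proof is correct in substance, but it takes a genuinely different route from the paper's. The paper does not reprove the node-averaging bound from scratch: it cites the known $L^2$ estimate \cite[Eq.~(4.13)]{Di-Pietro.Droniou:20} for $\|\b v_h-\Iav{h}{k}\b v_h\|_{L^2(T,\R^d)}$, converts it to the $L^\sob$ setting using the local Lebesgue embeddings for polynomials on $T$ and on the faces (which trade $L^2$ for $L^\sob$ norms at the price of powers of $|T|$ and $|F|$ that combine to leave exactly the factor $h_F$), and then applies a single local inverse inequality $h_T^{-\sob}\|\cdot\|_{L^\sob(T)}^\sob$ to reach the $W^{1,\sob}$ seminorm before reorganizing the element/face sums. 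Your argument instead redoes the underlying mechanism directly in $L^\sob$: inverse inequality plus nodal norm equivalence on each simplex $\tau\in\mathfrak T_h$, telescoping of nodal defects along face-connected chains in the star $\mathfrak T_V$ (with the observation that jumps across simplicial faces interior to an element $T\in\T_h$ vanish, so only faces of $\F_h$ survive, and with the boundary convention $[\b v_h]_F=\b v_h\res{F}$ handling $V\in\partial\Omega$), and a sup-vs-$L^\sob$ equivalence on $\Poly^k(F,\R^d)$ giving the factor $h_F^{-(d-1)}$; the exponents $h_\tau^{d-\sob}h_F^{-(d-1)}\simeq h_F^{1-\sob}$ match. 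The paper's route is shorter and delegates the combinatorics to the cited reference; yours is self-contained and makes explicit exactly where the matching-submesh and mesh-regularity hypotheses enter. The one point you correctly flag as needing verification --- that any two simplices of $\mathfrak T_V$ can be joined by a uniformly short chain of face-neighbors within the star, also when $V\in\partial\Omega$ --- is standard for matching simplicial submeshes in the regularity class of \cite[Definitions~1.8--1.9]{Di-Pietro.Droniou:20}, and is precisely the ingredient hidden inside the $L^2$ result the paper invokes; with that granted, your proof is complete.
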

\begin{proof}
  Combining \cite[Eq. (4.13)]{Di-Pietro.Droniou:20} (which corresponds to \eqref{eq:Iav:bound} for $\sob=2$) with the local Lebesgue embeddings of \cite[Lemma 1.25]{Di-Pietro.Droniou:20} (see also \cite[Lemma 5.1]{Di-Pietro.Droniou:17}) gives, for any $T\in\T_h$,
  \begin{equation}\label{eq:Iav:bound:0}
    \|\b v_h-\Iav{h}{k}\b v_h\|_{L^\sob(T,\R^d)}^\sob \lesssim \sum_{F\in\F_{\mathcal V,T}}h_F \|[\b v_h]_F\|^\sob_{L^\sob(F,\R^d)},
  \end{equation}
  where $\F_{\mathcal V,T}$ collects the faces whose closure has non-empty intersection with $\overline{T}$.
  Using the local inverse inequality of \cite[Lemma 1.28]{Di-Pietro.Droniou:20} (see also \cite[Eq. (A.1)]{Di-Pietro.Droniou:17}), we can write
  \[
  \begin{aligned}
    |\b v_h-\Iav{h}{k}\b v_h|_{W^{1,\sob}(\T_h,\R^d)}^\sob
    &\lesssim \sum_{T\in\T_h} h_T^{-\sob}\|\b v_h-\Iav{h}{k}\b v_h\|_{L^\sob(T,\R^d)}^\sob
    \\
    &\lesssim \sum_{T\in\T_h}\sum_{F\in\F_{\mathcal V,T}} h_F^{1-\sob} \|[\b v_h]_F\|^\sob_{L^\sob(F,\R^d)}
    \\
    &\lesssim \sum_{F\in\F_h} \sum_{T\in\T_{\mathcal V,F}} h_F^{1-\sob} \|[\b v_h]_F\|^\sob_{L^\sob(F,\R^d)}
    \\
    &\le\max_{F\in\F_h}\card(\T_{\mathcal V,F}) \sum_{F\in\F_h} h_F^{1-\sob} \|[\b v_h]_F\|^\sob_{L^\sob(F,\R^d)},
  \end{aligned}
  \]
  where we have used the fact that $h_T^{-\sob} \le h_F^{-\sob}$ along with inequality \eqref{eq:Iav:bound:0} to pass to the second line,
  and we have exchanged the sums after setting $\T_{\mathcal V,F} \coloneqq \big\{T \in \T_h : \overline F \cap \overline T \neq \emptyset\big\}$ for all $F \in \F_h$ to pass to the third line.
  Observing that $\max_{F\in\F_h}\card(\T_{\mathcal V,F}) \lesssim 1$ (since, for any $F\in\F_h$, $\card(\T_{\mathcal V,F})$ is bounded by the left-hand side of \cite[Eq. (4.23)]{Di-Pietro.Droniou:20} written for any $T\in\T_h$ to which $F$ belongs), \eqref{eq:Iav:bound} follows.
\end{proof}

The following inequalities between sums of powers will be often used in what follows without necessarily recalling this fact explicitly each time.
Let an integer $n\ge 1$ and a real number $m \in (0,\infty)$ be given. Then, for all $a_1,\ldots,a_n \in (0,\infty) $, we have
\begin{equation}\label{eq:sum-power}
  n^{-(m-1)^\ominus}\sum_{i=1}^n a_i^m \le \left(\sum_{i=1}^n a_i\right)^m  \le n^{(m-1)^\oplus}\sum_{i=1}^n a_i^m.
\end{equation}
If $m=1$, then \eqref{eq:sum-power} holds with the equal sign.
If $m < 1$, \cite[Eqs. (5) and (3)]{Ursell:59} with $\alpha = 1$ and $\beta = m$ give
$
n^{m-1}\sum_{i=1}^n a_i^m \le \left(\sum_{i=1}^n a_i\right)^m  \le \sum_{i=1}^n a_i^m.
$
If, on the other hand, $m > 1$, \cite[Eqs. (3) and (5)]{Ursell:59} with $\alpha = m$ and $\beta = 1$ give
$
\sum_{i=1}^n a_i^m \le \left(\sum_{i=1}^n a_i\right)^m  \le n^{m-1}\sum_{i=1}^n a_i^m.
$
Gathering the above cases yields \eqref{eq:sum-power}.

\begin{lemma}(Discrete Korn inequality)\label{lem:discrete.korn.inequality} 
We have, for all $\bu v_h  \in \bdU{h,0}{k}$, recalling the notation \eqref{eq:vh},
\begin{equation}\label{eq:discrete.Korn}
  \|  \b v_h \|_{L^\sob(\Omega,\R^d)}^\sob+|\b v_h|_{W^{1,\sob}(\T_h,\R^d)}^\sob \lesssim \|   \bu v_h \|_{\sob,h}^\sob.
\end{equation}
\end{lemma}

\begin{proof}
  Let $\bu v_h \in \bdU{h,0}{k}$. Using a triangle inequality followed by \eqref{eq:sum-power}, we can write
  \[
  \begin{aligned}
    |\b v_h|_{W^{1,\sob}(\T_h,\R^d)}^\sob
    &\lesssim |\Iav{h}{k} \b v_h|_{W^{1,\sob}(\T_h,\R^d)}^\sob+|\b v_h-\Iav{h}{k}\b v_h|_{W^{1,\sob}(\T_h,\R^d)}^\sob \\
    &\lesssim \|\GRADs (\Iav{h}{k} \b v_h) \|_{L^\sob(\Omega,\M{d})}^\sob+|\b v_h-\Iav{h}{k}\b v_h|_{W^{1,\sob}(\T_h,\R^d)}^\sob\\
    &\lesssim \|\brkGRADs \b v_h \|_{L^\sob(\Omega,\M{d})}^\sob+|\b v_h-\Iav{h}{k}\b v_h|_{W^{1,\sob}(\T_h,\R^d)}^\sob\\
    &\lesssim \|\brkGRADs \b v_h \|_{L^\sob(\Omega,\M{d})}^\sob+\sum_{F\in\F_h} h_F^{1-\sob} \|[\b v_h]_F\|^\sob_{L^\sob(F,\R^d)},
  \end{aligned}
  \]
  where we have used the continuous Korn inequality \eqref{eq:Korn} to pass to the second line,
  we have inserted $\pm\brkGRADs\b v_h$ into the first norm and used a triangle inequality followed by \eqref{eq:sum-power} to pass to the third line,
  and we have invoked the bound \eqref{eq:Iav:bound} to conclude.
  Observing that, for any $F\in\F_h$, $|[\b v_h]_F| \leq \sum_{T\in\T_F}|\b v_F-\b v_T|$ by a triangle inequality, and using \eqref{eq:sum-power}, we can continue writing
  \[
  |\b v_h|_{W^{1,\sob}(\T_h,\R^d)}^\sob
  \lesssim \|\brkGRADs \b v_h \|_{L^\sob(\Omega,\M{d})}^\sob+\sum_{F\in\F_h}\sum_{T\in\T_F}h_F^{1-\sob} \| \b v_F - \b v_T\|^\sob_{L^\sob(F,\R^d)}
  = \|\bu v_h\|_{\sob,h}^\sob,    
  \]
  where we have exchanged the sums over faces and elements and recalled definition \eqref{eq:norm.epsilon.r.h} to conclude. This proves the bound for the second term in the left-hand side of \eqref{eq:discrete.Korn}.
  Combining this result with the global discrete Sobolev embeddings of \cite[Proposition 5.4]{Di-Pietro.Droniou:17} yields the bound for the first term in \eqref{eq:discrete.Korn}.
\end{proof}


\section{Well-posedness and convergence analysis}\label{sec:analysis}

In this section, after studying the stabilization function $\cst{s}_h$, we prove the main results stated in Section \ref{sec:discrete.problem.main.results}.

\subsection{Properties of the stabilization function}

\begin{lemma}[Consistency of $\cst{s}_T$]\label{lem:sT:consist}
  For any $T\in\T_h$ and any $\cst{s}_T$ satisfying Assumption \ref{ass:sT}, it holds, for all $\b w \in W^{k+2,\sob}(T,\R^d)$ and all $\bu v_T \in \bdU{T}{k}$,
  \begin{equation}\label{eq:sT:consist}
    |\cst{s}_T(\bI{T}{k} \b w,\bu v_T)| \lesssim h_T^{(k+1)(\sobs-1)}| \b w |_{W^{1,\sob}(T,\R^d)}^{\sob-\sobs}|\b w|_{W^{k+2,\sob}(T,\R^d)}^{\sobs-1}\| \bu v_T \|_{\sob,T},    
  \end{equation}
  where the hidden constant is independent of $h$, $T$, and $\b w$.
\end{lemma}

\begin{proof}
  The proof adapts the arguments of \cite[Propositon 2.14]{Di-Pietro.Droniou:20}.
  Using the polynomial consistency property \eqref{eq:sT:polynomial.consistency}, we can write
  \[
    \begin{aligned}
      |\cst{s}_T(\bI{T}{k} \b w,\bu v_T)|
      &= |\cst{s}_T(\bI{T}{k} \b w,\bu v_T)-\cst{s}_T(\bI{T}{k}(\PROJ{T}{k+1} \b w),\bu v_T)|
      \\ &\lesssim \cst{s}_T(\bI{T}{k} \b w,\bI{T}{k} \b w)^\frac{\sob-\sobs}{\sob}\cst{s}_T(\bI{T}{k}(\b w-\PROJ{T}{k+1} \b w),\bI{T}{k}(\b w-\PROJ{T}{k+1} \b w))^\frac{\sobs-1}{\sob}\cst{s}_T(\bu v_T,\bu v_T)^\frac{1}{\sob} \\
      &\lesssim \|\bI{T}{k} \b w\|_{\sob,T}^{\sob-\sobs}\|\bI{T}{k}(\b w-\PROJ{T}{k+1} \b w)\|_{\sob,T}^{\sobs-1}\|\bu v_T\|_{\sob,T}\\
      &\lesssim | \b w |_{W^{1,\sob}(T,\R^d)}^{\sob-\sobs}|\b w-\PROJ{T}{k+1} \b w|_{W^{1,\sob}(T,\R^d)}^{\sobs-1}\|\bu v_T\|_{\sob,T}\\
      &\lesssim h_T^{(k+1)(\sobs-1)}| \b w |_{W^{1,\sob}(T,\R^d)}^{\sob-\sobs}|\b w|_{W^{k+2,\sob}(T,\R^d)}^{\sobs-1}\|\bu v_T\|_{\sob,T},
    \end{aligned}
  \]
  where we have used the H\"older continuity \eqref{eq:sT:holder-continuity} and observed that, by the consistency property \eqref{eq:sT:polynomial.consistency}, $\cst{s}_T(\bI{T}{k}(\PROJ{T}{k+1} \b w),\bI{T}{k}(\PROJ{T}{k+1} \b w))=0$ to pass to the second line,
  we have used the boundedness property \eqref{eq:sT:stability.boundedness} to pass to the third line,
  the boundedness \eqref{eq:I:boundedness} of $\bI{T}{k}$ to pass to the fourth line,
  and the $(k+2,\sob,1)$-approximation property \eqref{eq:proj:app:T} of $\PROJ{T}{k+1}$ to conclude.
\end{proof}

In what follows, we will need generalized versions of the continuous and discrete H\"older inequalities, recalled hereafter for the sake of convenience.
Let $X \subset \R^d$ be measurable, $n \in \N^*$, and let $t,p_1,\ldots,p_n \in (0,\infty\rbrack$ be such that $\sum_{i=1}^n\frac{1}{p_i} = \frac{1}{t}$.
The continuous $(t;p_1,\ldots,p_n)$-H\"older inequality reads:
For any $(f_1,\ldots,f_n) \in \bigtimes_{i=1}^n L^{p_i}(X,\R)$, 
\begin{equation}\label{eq:holder}
  \left\| \prod_{i=1}^n f_i \right\|_{L^t(X,\R)} \le\ \prod_{i=1}^n\| f_i \|_{L^{p_i}(X,\R)}.
\end{equation}
Let $m \in \N^*$.
For all $f : \{1,\ldots,m\} \to \R$ and all $q \in [1,\infty)$, setting $\|f\|_q \coloneqq \left(\sum_{i=1}^m |f(i)|^q\right)^\frac{1}{q}$, and $\|f\|_\infty \coloneqq \max_{1 \le i \le m}|f(i)|$, the discrete $(t;p_1,\ldots,p_n)$-H\"older inequality reads:
For any $f_1,\ldots,f_n : \{1,\ldots,m\} \to \R$,
\begin{equation}\label{eq:discrete.holder}
  \left\| \prod_{i=1}^n f_i \right\|_{t} \le\ \prod_{i=1}^n\| f_i \|_{p_i}.
\end{equation}

\begin{proposition}[Properties of $\cst{s}_h$]
  Let $\cst{s}_h$ be given by \eqref{eq:sh} with, for all $T\in\T_h$, $\cst{s}_T$ satisfying Assumption \ref{ass:sT}.
  Then it holds, for all $\bu v_h \in\bdU{h}{k}$,
  \begin{subequations}\label{eq:sh:properties}
    \begin{equation}\label{eq:sh:stability.boundedness}
      \| \dgrads{k}{h}\bu v_h \|_{L^\sob(\Omega,\M{d})}^\sob + \cst{s}_h(\bu v_h,\bu v_h)
      \simeq \|\bu v_h\|_{\sob,h}^\sob.
    \end{equation}
    Furthermore, for all $\bu u_h, \bu v_h,\bu w_h\in\bdU{h}{k}$ it holds, setting $\bu e_h\coloneq\bu u_h - \bu w_h$,
    \begin{gather}\label{eq:sh:holder-continuity}
      \left|\cst{s}_h(\bu u_h,\bu v_h)-\cst{s}_h(\bu w_h,\bu v_h)\right|
      \lesssim
      \left(\cst{s}_h(\bu u_h,\bu u_h)+\cst{s}_h(\bu w_h,\bu w_h)\right)^\frac{\sob-\sobs}{\sob}\cst{s}_h(\bu e_h,\bu e_h)^\frac{\sobs-1}{\sob}\cst{s}_h(\bu v_h,\bu v_h)^\frac{1}{\sob},
      \\\label{eq:sh:strong-monotonicity}
      \left(
      \cst{s}_h(\bu u_h,\bu e_h) - \cst{s}_h(\bu w_h,\bu e_h)
      \right)\left(
      \cst{s}_h(\bu u_h,\bu u_h)+\cst{s}_h(\bu w_h,\bu w_h)
      \right)^\frac{2-\sobs}{\sob} \gtrsim \cst{s}_h(\bu e_h,\bu e_h)^\frac{\sob+2-\sobs}{\sob} .
    \end{gather}
  \end{subequations}
  Finally, for any $\b w \in \b U\cap W^{k+2,\sob}(\T_h,\R^d)$, it holds
  \begin{equation}\label{eq:sh:consist}
    \sup\limits_{\bu v_h \in \bdU{h,0}{k},\| \bu v_h \|_{\sob,h} = 1}\cst{s}_h(\bI{h}{k} \b w,\bu v_h)  \lesssim h^{(k+1)(\sobs-1)}| \b w |_{W^{1,\sob}(\Omega,\R^d)}^{\sob-\sobs}|\b w|_{W^{k+2,\sob}(\T_h,\R^d)}^{\sobs-1}.
  \end{equation}
  Above, the hidden constants are independent of $h$ and of the arguments of $\cst{s}_h$.
\end{proposition}

\begin{proof}
  For the sake of conciseness, we only sketch the proof and leave the details to the reader.
  Summing \eqref{eq:sT:stability.boundedness} over $T \in \T_h$ immediately yields \eqref{eq:sh:stability.boundedness}.
  The H\"older continuity property \eqref{eq:sh:holder-continuity} follows applying to the quantity in the left-hand side triangle inequalities, using \eqref{eq:sT:holder-continuity}, and concluding with a discrete $(1;\frac{\sob}{\sob-\sobs},\frac{\sob}{\sobs-1},\sob)$-H\"older inequality.
  Moving to \eqref{eq:sh:strong-monotonicity}, starting from $|\cst{s}_h(\bu e_h,\bu e_h)|$, we use \eqref{eq:sT:strong-monotonicity} and apply a discrete $(1;\frac{\sob+2-\sobs}{2-\sobs},\frac{\sob+2-\sobs}{\sob})$-H\"{o}lder inequality to conclude.
  Finally, to prove \eqref{eq:sh:consist} we start from $\cst{s}_h(\bu I_h^k\b w,\bu v_h)$, expand this quantity according to \eqref{eq:sh}, use, for all $T \in \T_h$, the local consistency property \eqref{eq:sT:consist} together with $h_T \le h$, invoke the discrete $(1;\frac{\sob}{\sob-\sobs},\frac{\sob}{\sobs-1},\sob)$-H\"older inequality, and pass to the supremum to conclude.
\end{proof}

\subsection{Well-posedness}\label{sec:analysis:well-posedness}

In this section, after proving H\"older continuity and strong monotonicity properties for the discrete viscous function $\cst{a}_h$ and the inf-sup stability of the pressure-velocity coupling bilinear form $\cst{b}_h$, we prove Theorem \ref{thm:well-posedness}.

\subsubsection{H\"older continuity and strong monotonicity of the viscous function}

\begin{lemma}[H\"older continuity and strong monotonicity of $\cst{a}_h$]\label{lem:ah:holder.continuity.strong.monotonicity}
  For all $\bu u_h, \bu v_h, \bu w_h \in \bdU{h}{k}$, setting $\bu e_h\coloneq\bu u_h - \bu w_h$, it holds
  \begin{subequations}\label{eq:ah:holder.continuity.strong.monotonicity}
    \begin{gather}\label{eq:ah:holder.continuity}
      \left|
      \cst{a}_h(\bu u_h,\bu v_h)-\cst{a}_h(\bu w_h,\bu v_h)
      \right| \lesssim \sigma_\cst{hc}\left( \sigma_\cst{de}^\sob+  \| \bu u_h \|_{\sob,h}^\sob+\| \bu w_h \|_{\sob,h}^\sob\right)^\frac{\sob-\sobs}{\sob}\| \bu e_h \|_{\sob,h}^{\sobs-1}\| \bu v_h \|_{\sob,h},
      \\\label{eq:ah:strong.monotonicity}
      \left(\cst{a}_h(\bu u_h,\bu e_h)-\cst{a}_h(\bu w_h,\bu e_h)\right)\left( \sigma_\cst{de}^\sob+\| \bu u_h \|_{\sob,h}^\sob+\| \bu w_h \|_{\sob,h}^\sob\right)^\frac{2-\sobs}{\sob} \gtrsim \sigma_\cst{sm}\| \bu e_h \|_{\sob,h}^{\sob+2-\sobs}.
    \end{gather}
  \end{subequations}
\end{lemma}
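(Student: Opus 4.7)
\medskip
\noindent\textbf{Proof plan.}
The plan is to split $\cst{a}_h$ into its stress and stabilization contributions via \eqref{eq:ah}, handle each separately using the pointwise properties of $\stress$ from Assumption \ref{ass:stress} and the global stabilization bounds \eqref{eq:sh:holder-continuity}--\eqref{eq:sh:strong-monotonicity}, and then reassemble using \eqref{eq:sh:stability.boundedness} (to convert $\|\dgrads{k}{h}\bu v_h\|_{L^\sob}^\sob$ and $\cst{s}_h(\bu v_h,\bu v_h)$ into $\|\bu v_h\|_{\strain,\sob,h}^\sob$) together with the constraint \eqref{eq:gamma} on $\gamma$ (so that $\gamma\le\sigma_\cst{hc}$ and $\gamma\ge\sigma_\cst{sm}$).

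For the H\"older continuity \eqref{eq:ah:holder.continuity}, I would start from the triangle inequality
$|\cst{a}_h(\bu u_h,\bu v_h)-\cst{a}_h(\bu w_h,\bu v_h)|\le\int_\Omega|\stress(\cdot,\dgrads{k}{h}\bu u_h)-\stress(\cdot,\dgrads{k}{h}\bu w_h)|_{d\times d}|\dgrads{k}{h}\bu v_h|_{d\times d}+\gamma|\cst{s}_h(\bu u_h,\bu v_h)-\cst{s}_h(\bu w_h,\bu v_h)|$.
On the first term, I would apply the pointwise bound \eqref{eq:power-framed:s.holder.continuity} and then the continuous $(1;\tfrac{\sob}{\sob-\sing},\tfrac{\sob}{\sing-1},\sob)$-H\"older inequality \eqref{eq:holder}; the factor $(\sigma_\cst{de}^\sob+|\dgrads{k}{h}\bu u_h|^\sob+|\dgrads{k}{h}\bu w_h|^\sob)^{(\sob-\sing)/\sob}$ sits in $L^{\sob/(\sob-\sing)}$ and, once raised to power $\sob/(\sob-\sing)$ and integrated, produces $\sigma_\cst{de}^\sob|\Omega|+\|\dgrads{k}{h}\bu u_h\|_{L^\sob}^\sob+\|\dgrads{k}{h}\bu w_h\|_{L^\sob}^\sob\lesssim\sigma_\cst{de}^\sob+\|\bu u_h\|_{\strain,\sob,h}^\sob+\|\bu w_h\|_{\strain,\sob,h}^\sob$ by \eqref{eq:sh:stability.boundedness}. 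The second (stabilization) term is handled directly by \eqref{eq:sh:holder-continuity} and \eqref{eq:sh:stability.boundedness}. Combining, using $\gamma\le\sigma_\cst{hc}$, yields \eqref{eq:ah:holder.continuity}.

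The strong monotonicity \eqref{eq:ah:strong.monotonicity} is the delicate part, and I expect the main obstacle to be the algebraic rearrangement needed to recover $\|\bu e_h\|_{\strain,\sob,h}^{\sob+2-\sing}$ from the pointwise bound \eqref{eq:power-framed:s.strong.monotonicity}, which gives $|\dgrads{k}{h}\bu e_h|^{\sob+2-\sing}$ only after dividing by $M\coloneq(\sigma_\cst{de}^\sob+|\dgrads{k}{h}\bu u_h|^\sob+|\dgrads{k}{h}\bu w_h|^\sob)^{(2-\sing)/\sob}$. The trick is to set $\theta\coloneq\sob/(\sob+2-\sing)$, rewrite $|\dgrads{k}{h}\bu e_h|^\sob=(|\dgrads{k}{h}\bu e_h|^{\sob+2-\sing})^\theta$, apply the pointwise bound raised to power $\theta$, integrate, and then use H\"older's inequality \eqref{eq:holder} with conjugate exponents $(\tfrac{\sob+2-\sing}{2-\sing},\tfrac{\sob+2-\sing}{\sob})$. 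Since $M^{\sob/(2-\sing)}=\sigma_\cst{de}^\sob+|\dgrads{k}{h}\bu u_h|^\sob+|\dgrads{k}{h}\bu w_h|^\sob$, this produces
$\sigma_\cst{sm}\|\dgrads{k}{h}\bu e_h\|_{L^\sob}^{\sob+2-\sing}\lesssim(\sigma_\cst{de}^\sob+\|\bu u_h\|_{\strain,\sob,h}^\sob+\|\bu w_h\|_{\strain,\sob,h}^\sob)^{(2-\sing)/\sob}\int_\Omega(\stress(\cdot,\dgrads{k}{h}\bu u_h)-\stress(\cdot,\dgrads{k}{h}\bu w_h)){:}\dgrads{k}{h}\bu e_h$
after raising to power $1/\theta$ and using \eqref{eq:sh:stability.boundedness}.

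Finally, I would combine this with \eqref{eq:sh:strong-monotonicity} (multiplied by $\gamma\ge\sigma_\cst{sm}$ and with the stabilization seminorms bounded by $\|\cdot\|_{\strain,\sob,h}$-seminorms via \eqref{eq:sh:stability.boundedness}), sum the two contributions whose right-hand sides share the common factor $(\sigma_\cst{de}^\sob+\|\bu u_h\|_{\strain,\sob,h}^\sob+\|\bu w_h\|_{\strain,\sob,h}^\sob)^{(2-\sing)/\sob}$, and conclude by invoking \eqref{eq:sum-power} with $m=(\sob+2-\sing)/\sob\ge 1$ to bound $\|\bu e_h\|_{\strain,\sob,h}^{\sob+2-\sing}\simeq(\|\dgrads{k}{h}\bu e_h\|_{L^\sob}^\sob+\cst{s}_h(\bu e_h,\bu e_h))^{(\sob+2-\sing)/\sob}\lesssim\|\dgrads{k}{h}\bu e_h\|_{L^\sob}^{\sob+2-\sing}+\cst{s}_h(\bu e_h,\bu e_h)^{(\sob+2-\sing)/\sob}$, which is exactly what appears on the left of the combined estimate.
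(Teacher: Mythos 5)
Your proposal is correct and follows essentially the same route as the paper's proof: the same splitting of $\cst{a}_h$ via \eqref{eq:ah}, the pointwise bounds \eqref{eq:power-framed:s.holder.continuity}--\eqref{eq:power-framed:s.strong.monotonicity} combined with the $(1;\frac{\sob}{\sob-\sing},\frac{\sob}{\sing-1},\sob)$- and $(1;\frac{\sob+2-\sing}{2-\sing},\frac{\sob+2-\sing}{\sob})$-H\"older inequalities, the seminorm equivalence \eqref{eq:sh:stability.boundedness}, and the bracketing \eqref{eq:gamma} of $\gamma$. The only (immaterial) difference is in the final assembly of the strong monotonicity estimate, where you raise each contribution to the power $\frac{\sob+2-\sing}{\sob}$ before summing and then invoke \eqref{eq:sum-power}, while the paper sums the fractional-power forms first and raises the combined inequality at the end.
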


\begin{proof}
  (i) \textit{H\"older continuity.}
  Using a Cauchy--Schwarz inequality followed by the H\"older continuity \eqref{eq:power-framed:s.holder.continuity} of $\stress$, we can write
  \begin{equation}\label{C2:rge2:1}
    \begin{aligned}
      &\left|\int_\Omega
      \big(
      \stress(\cdot,\dgrads{k}{h}\bu u_h)-\stress(\cdot,\dgrads{k}{h}\bu w_h)
      \big):\dgrads{k}{h}\bu v_h\right| \\
      &\quad \le \sigma_\cst{hc} \int_\Omega  \left(\sigma_\cst{de}^\sob+|\dgrads{k}{h}\bu u_h|_{d\times d}^\sob+|\dgrads{k}{h}\bu w_h|_{d\times d}^\sob\right)^\frac{\sob-\sobs}{\sob}| \dgrads{k}{h}\bu e_h|_{d\times d}^{\sobs-1} |\dgrads{k}{h}\bu v_h|_{d \times d}  \\
      &\quad \lesssim \sigma_\cst{hc}\left(|\Omega|_d\sigma_\cst{de}^\sob+\|  \dgrads{k}{h}\bu u_h \|_{L^\sob(\Omega,\M{d})}^\sob+\|  \dgrads{k}{h}\bu w_h \|_{L^\sob(\Omega,\M{d})}^\sob\right)^\frac{\sob-\sobs}{\sob} \\
      & \qquad \times \|  \dgrads{k}{h}\bu e_h \|_{L^\sob(\Omega,\M{d})}^{\sobs-1}\| \dgrads{k}{h}\bu v_h \|_{L^\sob(\Omega,\M{d})}\\
      &\quad \lesssim \sigma_\cst{hc}\left(\sigma_\cst{de}^\sob+ \| \bu u_h \|_{\sob,h}^\sob+\| \bu w_h \|_{\sob,h}^\sob\right)^\frac{\sob-\sobs}{\sob}\| \bu e_h \|_{\sob,h}^{\sobs-1}\| \bu v_h \|_{\sob,h},
    \end{aligned}
  \end{equation}
  where we have used the $(1;\frac{\sob}{\sob-\sobs},\frac{\sob}{\sobs-1},\sob)$-H\"{o}lder inequality \eqref{eq:holder} in the second bound
  and the global seminorm equivalence \eqref{eq:sh:stability.boundedness} together with the fact that $|\Omega|_d\lesssim 1$ (since $\Omega$ is bounded) to conclude.
  For the stabilization term, combining the H\"older continuity \eqref{eq:sh:holder-continuity} of $\cst{s}_h$ and the seminorm equivalence \eqref{eq:sh:stability.boundedness} readily gives
  \begin{equation}\label{C2:rge2:2}
    \left|
    \cst{s}_h(\bu u_h,\bu v_h)-\cst{s}_h(\bu w_h,\bu v_h)
    \right|
    \lesssim \left(\sigma_\cst{de}^\sob+  \| \bu u_h \|_{\sob,h}^\sob+\| \bu w_h \|_{\sob,h}^\sob\right)^\frac{\sob-\sobs}{\sob}\| \bu e_h \|_{\sob,h}^{\sobs-1}\| \bu v_h \|_{\sob,h}, 
  \end{equation}
  where we have additionally noticed that $\sigma_\cst{de}^\sob\ge 0$ to add this term to the quantity inside parentheses.
  Using the definition \eqref{eq:ah} of $\cst{a}_h$, a triangle inequality followed by \eqref{C2:rge2:1} and \eqref{C2:rge2:2}, and recalling that $\gamma \le \sigma_\cst{hc}$ (cf. \eqref{eq:gamma}), \eqref{eq:ah:holder.continuity} follows.
  \medskip\\
  (ii) \textit{Strong monotonicity.} Using the strong monotonicity \eqref{eq:power-framed:s.strong.monotonicity} of $\stress$ and the $(1;\frac{\sob+2-\sobs}{2-\sobs},\frac{\sob+2-\sobs}{\sob})$-H\"{o}lder inequality \eqref{eq:holder}, we get
  \begin{equation}\label{eq:ah:sm:1}
    \hspace{-0.2cm}\begin{aligned}
      &\sigma_\cst{sm}^\frac{\sob}{\sob+2-\sobs}\| \dgrads{k}{h} \bu e_h\|_{L^\sob(\Omega,\M{d})}^\sob \\
      &\leq \int_\Omega \left(\sigma_\cst{de}^\sob+|\dgrads{k}{h}\bu u_h|_{d\times d}^\sob+|\dgrads{k}{h}\bu w_h|_{d\times d}^\sob\right)^{\frac{2-\sobs}{\sob+2-\sobs}}\left(
      \big(\stress(\cdot,\dgrads{k}{h}\bu u_h)-\stress(\cdot,\dgrads{k}{h}\bu w_h)\big):\dgrads{k}{h} \bu e_h
      \right)^\frac{\sob}{\sob+2-\sobs}\\
      &\lesssim \left(\sigma_\cst{de}^\sob+\| \dgrads{k}{h} \bu u_h\|_{L^\sob(\Omega,\M{d})}^\sob+\| \dgrads{k}{h} \bu w_h\|_{L^\sob(\Omega,\M{d})}^\sob\right)^\frac{2-\sobs}{\sob+2-\sobs} \\
      &\qquad \times \left(
        \int_\Omega \left(
        \stress(\cdot,\dgrads{k}{h}\bu u_h)-\stress(\cdot,\dgrads{k}{h}\bu w_h)
        \right):\dgrads{k}{h} \bu e_h
        \right)^\frac{\sob}{\sob+2-\sobs}\\
      &\lesssim \left(\sigma_\cst{de}^\sob+\| \bu u_h \|_{\sob,h}^\sob+\| \bu w_h \|_{\sob,h}^\sob\right)^\frac{2-\sobs}{\sob+2-\sobs} \left(
        \int_\Omega \left(
        \stress(\cdot,\dgrads{k}{h}\bu u_h)-\stress(\cdot,\dgrads{k}{h}\bu w_h)
        \right):\dgrads{k}{h} \bu e_h
        \right)^\frac{\sob}{\sob+2-\sobs},
    \end{aligned}
  \end{equation}
  where the conclusion follows from the global seminorm equivalence \eqref{eq:sh:stability.boundedness}.
  Additionally, using the strong monotonicity \eqref{eq:sh:strong-monotonicity} of $\cst{s}_h$ together with the fact that $\sigma_\cst{sm} \le \gamma$ (cf. \eqref{eq:gamma}) and invoking again the seminorm equivalence \eqref{eq:sh:stability.boundedness}, we readily obtain
  \begin{equation}\label{eq:ah:sm:2}
    \sigma_\cst{sm}^\frac{\sob}{\sob+2-\sobs}\cst{s}_h(\bu e_h,\bu e_h)
    \lesssim \left(\sigma_\cst{de}^\sob+\| \bu u_h \|_{\sob,h}^\sob+\| \bu w_h \|_{\sob,h}^\sob\right)^\frac{2-\sobs}{\sob+2-\sobs}\left(
      \gamma\cst{s}_h(\bu u_h,\bu e_h)-\gamma\cst{s}_h(\bu w_h,\bu e_h)
      \right)^\frac{\sob}{\sob+2-\sobs}.
  \end{equation}
  Finally, combining again the norm equivalence \eqref{eq:sh:stability.boundedness} with \eqref{eq:ah:sm:1} and \eqref{eq:ah:sm:2}, and using \eqref{eq:sum-power} yields
  \[
  \begin{aligned}
    \sigma_\cst{sm}^\frac{\sob}{\sob+2-\sobs}\| \bu e_h \|_{\sob,h}^\sob \lesssim &\left( \sigma_\cst{de}^\sob+\| \bu u_h \|_{\sob,h}^\sob+\| \bu w_h \|_{\sob,h}^\sob\right)^\frac{2-\sobs}{\sob+2-\sobs}\left(
      \cst{a}_h(\bu u_h,\bu e_h)-\cst{a}_h(\bu w_h,\bu e_h)
      \right)^\frac{\sob}{\sob+2-\sobs}.
  \end{aligned}
  \]
  Raising this inequality to the power $\frac{\sob-2-\sobs}{\sob}$ yields \eqref{eq:ah:strong.monotonicity}.
\end{proof}

\subsubsection{Stability of the pressure-velocity coupling}

\begin{lemma}[Inf-sup stability of $\cst{b}_h$]\label{lem:bh:inf-sup}
  It holds, for all $q_h \in \dP{h}{k}$,
  \begin{equation}\label{eq:bh:inf-sup}
    \| q_h \|_{L^{\sob'}(\Omega,\R)} \lesssim \sup\limits_{\bu v_h \in \bdU{h,0}{k}, \| \bu v_h \|_{\sob,h} = 1} \cst{b}_h(\bu v_h,q_h),
  \end{equation}
  with hidden constant depending only on $d$, $k$, $\sob$, $\Omega$, and the mesh regularity parameter.
\end{lemma}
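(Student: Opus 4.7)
I plan to establish the inf-sup condition via the classical Fortin argument, lifted from the continuous setting to the discrete one through the interpolator $\I{h}{k}$. The starting point is the continuous inf-sup condition for the generalized Stokes problem in the $(W^{1,\sob}_0,L^{\sob'}_0)$-setting, which follows from the Ne\v{c}as/Bogovski\u\i\ theory: for any $q_h\in\dP{h}{k}\subset L^{\sob'}_0(\Omega,\R)$, one can construct $g\coloneq |q_h|^{\sob'-2}q_h - \tfrac{1}{|\Omega|_d}\int_\Omega |q_h|^{\sob'-2}q_h \in L^\sob_0(\Omega,\R)$, which satisfies $\|g\|_{L^\sob(\Omega,\R)}\lesssim\|q_h\|_{L^{\sob'}(\Omega,\R)}^{\sob'-1}$ and $\int_\Omega g\, q_h = \|q_h\|_{L^{\sob'}(\Omega,\R)}^{\sob'}$ (using the zero-mean property of $q_h$). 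Bogovski\u\i\ then yields $\b v\in\b U$ with $\div\b v = -g$ and $|\b v|_{W^{1,\sob}(\Omega,\R^d)}\lesssim\|q_h\|_{L^{\sob'}(\Omega,\R)}^{\sob'-1}$.

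Next, I would set $\bu v_h\coloneq\I{h}{k}\b v\in\dU{h,0}{k}$ (it vanishes on boundary faces since $\b v\in\b U$) and compute $\cst{b}_h(\bu v_h,q_h)$. The key tool is the commutation property $\ddiv{k}{T}(\I{T}{k}\b v)=\proj{T}{k}(\div\b v)$ noted just after \eqref{eq:D}. Since $q_T\in\Poly^k(T,\R)$ acts as an admissible test function in the definition \eqref{eq:proj} of $\proj{T}{k}$, the projection can be dropped in the integral, giving
\[
\cst{b}_h(\I{h}{k}\b v,q_h) = -\sum_{T\in\T_h}\int_T \proj{T}{k}(\div\b v)\,q_T = -\int_\Omega (\div\b v)\, q_h = \int_\Omega g\, q_h = \|q_h\|_{L^{\sob'}(\Omega,\R)}^{\sob'}.
\]
The interpolator boundedness \eqref{eq:I:boundedness}, applied elementwise and summed, combined with the continuous Korn inequality \eqref{eq:Korn} applied to $\b v$, provides
\[
\|\I{h}{k}\b v\|_{\strain,\sob,h} \lesssim |\b v|_{W^{1,\sob}(\Omega,\R^d)} \lesssim \|q_h\|_{L^{\sob'}(\Omega,\R)}^{\sob'-1}.
\]

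Finally, assuming $\|q_h\|_{L^{\sob'}(\Omega,\R)}\neq 0$ (otherwise the bound is trivial), I would rescale $\bu v_h$ to unit $\|\cdot\|_{\strain,\sob,h}$-norm. The supremum in \eqref{eq:bh:inf-sup} is then bounded below by
\[
\frac{\cst{b}_h(\I{h}{k}\b v,q_h)}{\|\I{h}{k}\b v\|_{\strain,\sob,h}}
\gtrsim \frac{\|q_h\|_{L^{\sob'}(\Omega,\R)}^{\sob'}}{\|q_h\|_{L^{\sob'}(\Omega,\R)}^{\sob'-1}}
= \|q_h\|_{L^{\sob'}(\Omega,\R)},
\]
which is the claim. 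The main technical issue is justifying the continuous right-inverse of the divergence in the $L^\sob$ setting (Bogovski\u\i's theorem on Lipschitz domains for general $\sob\in(1,+\infty)$); all remaining steps are routine commutation and boundedness arguments already available in the excerpt.
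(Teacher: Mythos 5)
Your proposal is correct and follows essentially the same route as the paper: a Fortin argument built on the interpolator $\I{h}{k}$, with the test pressure $|q_h|^{\sob'-2}q_h$ minus its mean lifted through the Bogovski\u\i/surjectivity of $\div:\b U\to L^\sob_0(\Omega,\R)$, and the Fortin consistency $\cst{b}_h(\I{h}{k}\b v,q_h)=b(\b v,q_h)$ obtained from the commutation property of $\ddiv{k}{T}$ (which the paper cites rather than rederives). The only cosmetic difference is that your appeal to the continuous Korn inequality in the boundedness step is superfluous, since \eqref{eq:I:boundedness} already controls $\|\I{h}{k}\b v\|_{\strain,\sob,h}$ by the full seminorm $|\b v|_{W^{1,\sob}(\Omega,\R^d)}$, which Bogovski\u\i\ bounds directly.
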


\begin{proof}
  The proof follows the classical Fortin argument (cf., e.g., \cite[Section 8.4]{Boffi.Brezzi.ea:13}), adapted here to the non-Hilbertian setting.
  \medskip\\
  (i) \emph{Fortin operator.} We need to prove that the following properties hold for any $\b v\in W^{1,\sob}(\Omega,\R^d)$:
  \begin{subequations}\label{eq:fortin}
    \begin{gather}
      \| \bI{h}{k} \b v\|_{\sob,h} \lesssim | \b v |_{W^{1,\sob}(\Omega,\R^d),}\label{eq:fortin:boundedness}
      \\ 
      \cst{b}_h(\bI{h}{k} \b v,q_h) = b(\b v,q_h)\qquad\forall q_h\in\Poly^k(\T_h,\R).\label{eq:fortin:consistency}
    \end{gather}
  \end{subequations}
  Property \eqref{eq:fortin:boundedness} is obtained by raising both sides of \eqref{eq:I:boundedness} to the power $\sob$, summing over $T \in \T_h$, then taking the $r$th root of the resulting inequality.
  The proof of \eqref{eq:fortin:consistency} is given, e.g., in \cite[Lemma 8.12]{Di-Pietro.Droniou:20}.
  \medskip\\
  (ii) \emph{Inf-sup condition on $\cst{b}_h$.}
  Let $q_h \in \dP{h}{k}$ and set $c_h \coloneqq \int_\Omega |q_h|^{\sob'-2}q_h$.
  Using the triangle and H\"{o}lder inequalities, we get 
  \begin{equation}\label{eq:inf-sup:qT}
  \| |q_h|^{\sob'-2}q_h-c_h \|_{L^{\sob}(\Omega,\R)}
  \le \| q_h\|_{L^{\sob'}(\Omega,\R)}^{\sob'-1}+|c_h||\Omega|_d^\frac{1}{\sob}
  \le \left(1+|\Omega|_d\right)\| q_h\|_{L^{\sob'}(\Omega,\R)}^{\sob'-1} \lesssim \| q_h\|_{L^{\sob'}(\Omega,\R)}^{\sob'-1},
  \end{equation}
  where we have used the fact that $|c_h|\le\| q_h\|_{L^{\sob'}(\Omega,\R)}^{\sob'-1} |\Omega|_d^{\frac1{\sob'}}$ along with $\frac1\sob+\frac1{\sob'}=1$ in the second bound and the fact that $|\Omega|_d\lesssim 1$ to conclude.
  Thus, using the surjectivity of the continuous divergence operator $\div : \b U \to L^{\sob}_0(\Omega,\R)\coloneq \left\{ q \in L^\sob(\Omega,\R) : \int_\Omega q = 0 \right\}$, (c.f. \cite{Duran.Muschietti.ea:10} and also \cite[Theorem 1]{Bogovski:79}), we infer that there exists  $\b v_{q_h} \in \b U$ such that 
  \begin{equation}\label{eq:velocity.lifting}
    -\div \b v_{q_h} = |q_h|^{\sob'-2}q_h-c_h \quad \cst{and} \quad | \b v_{q_h}|_{W^{1,\sob}(\Omega,\R^d)} \lesssim \| |q_h|^{\sob'-2}q_h-c_h \|_{L^\sob(\Omega,\R)}.
  \end{equation}
  Denote by $\$$ the supremum in \eqref{eq:bh:inf-sup}. Using the fact that $q_h$ has zero mean value over $\Omega$, the equality in \eqref{eq:velocity.lifting} together with the definition \eqref{eq:a.b} of $b$, and the second Fortin property \eqref{eq:fortin:consistency}, we have
  \[
  \|q_h \|_{L^{\sob'}(\Omega,\R)}^{\sob'}
  {=} \int_\Omega \big(|q_h|^{\sob'-2}q_h-c_h\big) q_h
  = b(\b v_{q_h},q_h)
  = \cst{b}_h(\bI{h}{k}\b v_{q_h},q_h) 
  \le \$  \| \bI{h}{k} \b v_{q_h} \|_{\sob,h}
  \lesssim \$ \| q_h \|_{L^{\sob'}(\Omega,\R)}^{\sob'-1},
  \]
  where, to conclude, we have used \eqref{eq:fortin:boundedness} followed by \eqref{eq:velocity.lifting} and \eqref{eq:inf-sup:qT}.
  Simplifying yields \eqref{eq:bh:inf-sup}.
\end{proof}

\subsubsection{Proof of Theorem \ref{thm:well-posedness}}\label{sec:well-posedness:proof}

\begin{proof}[Proof of Theorem \ref{thm:well-posedness}]
  (i) \emph{Existence.}
  Denote by $\dP{h}{k,*}$ the dual space of $\dP{h}{k}$ and let $B_h : \bdU{h,0}{k} \to \dP{h}{k,*}$ be such that, for all $\bu v_h \in \bdU{h,0}{k}$,
  \[
  \langle B_h\bu v_h, q_h \rangle \coloneqq -\cst{b}_h(\bu v_h,q_h) \qquad \forall q_h \in \dP{h}{k}.
  \]
  Here and in what follows, $\langle{\cdot},{\cdot}\rangle$ denotes the appropriate duality pairing as inferred from its arguments.
  Define the following subspace of $\bdU{h,0}{k}$ spanned by vectors of discrete unknowns with zero discrete divergence:
  \begin{equation}\label{eq:Whk}
    \bu W_h^k \coloneq \Kernel(B_h) = \left\{
    \bu v_h \in \bdU{h,0}{k} : \cst{b}_h(\bu v_h,q_h) = 0 \quad \forall q_h \in \dP{h}{k}
    \right\},
  \end{equation}
  and consider the following problem:
  Find $\bu u_h\in\bu W_h^k$ such that
  \begin{equation}\label{eq:existence:auxiliary}
    \cst{a}_h(\bu u_h,\bu v_h) = \int_\Omega \b f\cdot \b v_h\qquad\forall \bu v_h\in\bu W_h^k.
  \end{equation}
  Existence of a solution to this problem for a fixed $h$ can be proved adapting the arguments of \cite[Theorem 4.5]{Di-Pietro.Droniou:17}.
  Specifically, equip $\bu W_h^k$ with an inner product $(\cdot,\cdot)_{\b W,h}$ (which need not be further specified), denote by $\|{\cdot}\|_{\b W,h}$ the induced norm, and let $\b\Phi_h:\bu W_h^k\to \bu W_h^k$ be such that, for all $\bu w_h\in\bu W_h^k$, $(\b\Phi_h(\bu w_h),\bu v_h)_{\b W,h} = \cst{a}_h(\bu w_h,\bu v_h)$ for all $\bu v_h\in\bu W_h^k$.
  The strong monotonicity \eqref{eq:ah:strong.monotonicity} of $\cst{a}_h$ yields, for any $\bu v_h\in\bu W_h^k$ such that $\| \bu v_h \|_{\sob,h} \ge \sigma_\cst{de}$, 
  \[
  (\b\Phi_h(\bu v_h),\bu v_h)_{\b W,h}\ge \sigma_\cst{sm} (\sigma_\cst{de}^\sob + \| \bu v_h \|_{\sob,h}^\sob)^\frac{\sobs-2}{\sob} \| \bu v_h \|_{\sob,h}^{\sob+2-\sobs} \gtrsim \sigma_\cst{sm}\| \bu v_h \|_{\sob,h}^\sob\ge C^\sob \sigma_\cst{sm}\| \bu v_h\|_{\b W,h}^\sob,
  \]
   where $C$ denotes the constant (possibly depending on $h$) in the equivalence of the norms $\|{\cdot}\|_{\sob,h}$ and $\|{\cdot}\|_{\b W,h}$ (which holds since $\bu W_h^k$ is finite-dimensional).
  This shows that $\b\Phi_h$ is coercive hence, by \cite[Theorem 3.3]{Deimling:85}, surjective.
  Let now $\bu w_h\in\bu W_h^k$ be such that $(\bu w_h,\bu v_h)_{\b W,h}=\int_\Omega \b f\cdot \b v_h$ for all $\bu v_h\in\bu W_h^k$.
  By the surjectivity of $\b\Phi_h$, there exists $\bu u_h\in \bu W_h^k$ such that $\b\Phi_h(\bu u_h)=\bu w_h$ which, by definition of $\bu w_h$ and $\b\Phi_h$, is a solution to the discrete problem \eqref{eq:existence:auxiliary}.

  The proof of existence now continues as in the linear case; see, e.g., \cite[Theorem 4.2.1]{Boffi.Brezzi.ea:13}.
  Denote by $\bdU{h,0}{k,*}$ the dual space of $\bdU{h,0}{k}$ and consider the linear mapping $\ell_h\in\bdU{h,0}{k,*}$ such that, for all $\bu v_h\in\bdU{h,0}{k}$,
  \[
  \langle\ell_h,\bu v_h\rangle\coloneq \int_\Omega \b f\cdot \b v_h - \cst{a}_h(\bu u_h, \bu v_h).
  \]
  Thanks to \eqref{eq:existence:auxiliary}, $\ell_h$ vanishes identically for every $\bu v_h\in\bu W_h^k$, that is to say, $\ell_h$ lies in the polar space of $\bu W_h^k$ which, denoting by $B_h^*:\dP{h}{k}\to\bdU{h,0}{k,*}$ the adjoint operator of $B_h$, coincides in our case with $\Image(B_h^*)$ (see, e.g., \cite[Theorem 4.14]{Boffi.Brezzi.ea:13}).
  Hence, $\ell_h\in\Image(B_h^*)$, and there exists therefore a $p_h\in \dP{h}{k}$ such that $B_h^* p_h = \ell_h$.
  This means that, for all $\bu v_h\in\bdU{h,0}{k}$,
  \[
  \cst{b}_h(\bu v_h,p_h)
  = \langle B_h^* p_h,\bu v_h\rangle
  = \langle\ell_h,\bu v_h\rangle
  = \int_\Omega \b f\cdot \b v_h - \cst{a}_h(\bu u_h, \bu v_h),
  \]
  i.e., the $(\bu u_h,p_h)$ satisfies the discrete momentum equation \eqref{eq:stokes.discrete:momentum}. On the other hand, since $\bu u_h\in\bu W_h^k$, we also have, by the definition \eqref{eq:Whk} of $\bu W_h^k$, $  \cst{b}_h(\bu u_h,q_h) = 0$ for all $q_h\in \dP{h}{k}$, which shows that the discrete mass equation \eqref{eq:stokes.discrete:mass} is also verified.
  In conclusion, $(\bu u_h,p_h)\in\bdU{h,0}{k}\times \dP{h}{k}$ solves \eqref{eq:stokes.discrete}.
  \\
  (ii) \emph{Uniqueness.}
  We start by proving uniqueness for the velocity.
  Let $(\bu u_h,p_h),(\bu u'_h,p'_h) \in \bdU{h,0}{k} \times \dP{h}{k}$ be two solutions of \eqref{eq:stokes.discrete}.
  Making $\bu v_h = \bu u_h - \bu u'_h$ in \eqref{eq:stokes.discrete:momentum} written first for $(\bu u_h, p_h)$ then for $(\bu u'_h, p'_h)$, then taking the difference and observing that $\cst{b}_h(\bu u_h-\bu u'_h,p_h)=\cst{b}_h(\bu u_h - \bu u'_h,p'_h)=0$ by \eqref{eq:stokes.discrete:mass}, we infer that
  \[
  \cst{a}_h(\bu u_h,\bu u_h - \bu u'_h)-\cst{a}_h(\bu u'_h,\bu u_h - \bu u'_h) = 0.
  \]
  Thus, the strong monotonicity \eqref{eq:ah:strong.monotonicity} of $\cst{a}_h$ yields $\| \bu u_h - \bu u'_h \|_{\sob,h} = 0$, which implies $\bu u_h = \bu u'_h$ since $\|{\cdot}\|_{\sob,h}$ is a norm on $\bdU{h,0}{k}$.
  Moreover, using the inf-sup stability \eqref{eq:bh:inf-sup} of $\cst{b}_h$ and \eqref{eq:stokes.discrete:momentum} written first for $\bu u_h$ then for $\bu u_h'$, we get
  \[
  \begin{aligned}
    \| p_h-p'_h \|_{L^{\sob'}(\Omega,\R)}
    &\lesssim \sup\limits_{\bu v_h \in \bdU{h,0}{k},\| \bu v_h \|_{\sob,h} = 1} \cst{b}_h(\bu v_h,p_h-p'_h)
    \\
    &=  \sup\limits_{\bu v_h \in \bdU{h,0}{k},\| \bu v_h \|_{\sob,h} = 1} \left(\cst{a}_h(\bu u'_h,\bu v_h)-\cst{a}_h(\bu u_h,\bu v_h)\right) = 0,
   \end{aligned} 
  \]
  hence $p_h=p'_h$.
  \medskip\\
  (iii) \emph{A priori estimates.}
  Using the strong monotonicity \eqref{eq:ah:strong.monotonicity} of $\cst{a}_h$ (with $\bu w_h = \bu 0$), equation \eqref{eq:stokes.discrete:momentum} together with \eqref{eq:stokes.discrete:mass}, and the H\"{o}lder inequality together with the discrete Korn inequality \eqref{eq:discrete.Korn}, we obtain
  \begin{equation}\label{eq:well-posedness:0}
    \begin{aligned}
      \sigma_\cst{sm}\big(
      \sigma_\cst{de}^\sob + \| \bu u_h \|_{\sob,h}^\sob
      \big)^\frac{\sobs-2}{\sob} \| \bu u_h \|_{\sob,h}^{\sob+2-\sobs}
      &\lesssim  \cst{a}_h(\bu u_h,\bu u_h)
      = \displaystyle\int_\Omega \b f \cdot \b u_h
      \lesssim \| \b f \|_{L^{\sob'}(\Omega,\R^d)}\| \bu u_h \|_{\sob,h}.
    \end{aligned}
  \end{equation}
  We then conclude as in the continuous case to infer \eqref{eq:discrete.solution:bounds:uh} (see Remark \ref{rem:a-priori}).
  To prove the bound \eqref{eq:discrete.solution:bounds:ph} on the pressure, we use the inf-sup stability \eqref{eq:bh:inf-sup} of $\cst{b}_h$ to write
  \[
  \begin{aligned}
    \| p_h \|_{L^{\sob'}(\Omega,\R)}
    &\lesssim \sup\limits_{\bu v_h \in \bdU{h,0}{k}, \| \bu v_h \|_{\sob,h} = 1} \cst{b}_h(\bu v_h,p_h)
    \\
    &= \sup\limits_{\bu v_h \in \bdU{h,0}{k}, \| \bu v_h \|_{\sob,h}
     = 1} \left(\displaystyle\int_\Omega \b f \cdot \b v_h - \cst{a}_h(\bu u_h,\bu v_h) \right) \\
    &\lesssim \| \b f \|_{L^{\sob'}(\Omega,\R^d)} +\sigma_\cst{hc}(\sigma_\cst{de}^\sob+\| \bu u_h \|_{\sob,h}^\sob)^\frac{\sob-\sobs}{\sob}\| \bu u_h \|_{\sob,h}^{\sobs-1} \\   
    &\lesssim \sigma_\cst{hc}\left(\sigma_\cst{sm}^{-1}\| \b f \|_{L^{\sob'}(\Omega,\R^d)}+\sigma_\cst{de}^{|\sob-2|(\sobs-1)}\left(\sigma_\cst{sm}^{-1}\| \b f \|_{L^{\sob'}(\Omega,\R^d)}\right)^\frac{\sobs-1}{\sob+1-\sobs}\right),
  \end{aligned}
  \]
  where we have used the discrete momentum equation \eqref{eq:stokes.discrete:momentum} to pass to the second line,
  the H\"{o}lder and discrete Korn \eqref{eq:discrete.Korn} inequalities together with the H\"older continuity \eqref{eq:ah:holder.continuity} of $\cst{a}_h$ to pass to the third line,
  and the a priori bound \eqref{eq:discrete.solution:bounds:uh} on the velocity together with $\frac{\sigma_\cst{hc}}{\sigma_\cst{sm}}\ge 1$ (see \eqref{eq:power-framed:constants.bound}) to conclude.
\end{proof}

\subsection{Error estimate}\label{sec:analysis:error.estimate}

In this section, after studying the consistency of the viscous and pressure-velocity coupling terms, we prove Theorem \ref{thm:error.estimate}.

\subsubsection{Consistency of the viscous function}\label{sec:consistency:viscous.function}

\begin{lemma}[Consistency of $\cst{a}_h$]\label{lem:consistency:ah}
  Let $\b w \in \b U \cap W^{k+2,\sob}(\T_h,\R^d)$ be such that $\stress(\cdot,\GRADs \b w) \in W^{1,\sob'}(\Omega,\Ms{d}) \cap W^{(k+1)(\sobs-1),\sob'}(\T_h,\Ms{d})$.
  Define the viscous consistency error linear form $\mathcal E_{\cst{a},h}(\b w;\cdot) : \bdU{h}{k} \to \R$ such that, for all $\bu v_h \in \bdU{h}{k}$,
  \begin{equation}\label{eq:Eah}
    \mathcal E_{\cst{a},h}(\b w;\bu v_h) \coloneqq \int_\Omega (\DIV \stress(\cdot,\GRADs \b w)) \cdot \b v_h + \cst{a}_h(\bI{h}{k} \b w,\bu v_h).
  \end{equation}
  Then, under Assumptions \ref{ass:stress} and \ref{ass:sT}, we have
  \begin{multline}\label{eq:consistency:ah}
    \sup_{\bu v_h \in \bdU{h,0}{k},\| \bu v_h \|_{\sob,h} = 1} \mathcal E_{\cst{a},h}(\b w;\bu v_h)
    \lesssim
    h^{(k+1)(\sobs-1)}\bigg[
      \sigma_\cst{hc}\left(\sigma_\cst{de}^\sob + |\b w|_{W^{1,\sob}(\Omega,\R^d)}^\sob\right)^\frac{\sob-\sobs}{\sob}|\b w|_{W^{k+2,\sob}(\T_h,\R^d)}^{\sobs-1}
      \\
      +|\stress(\cdot,\GRADs \b w)|_{W^{(k+1)(\sobs-1),\sob'}(\T_h,\M{d})}        
      \bigg].
  \end{multline}
\end{lemma}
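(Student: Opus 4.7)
The plan is to split $\mathcal E_{\cst a,h}(\b w;\bu v_h)$ into a volumetric consistency term, a boundary residual term, and a stabilization term, and bound each using the H\"older continuity of $\stress$, the local approximation properties of $\PROJ{T}{k}$, and the consistency bound \eqref{eq:sh:consist}.

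First, abbreviating $\b\tau_\b w \coloneqq \stress(\cdot,\GRADs\b w)$, I would integrate $\int_\Omega(\DIV\b\tau_\b w)\cdot\b v_h$ by parts element-wise: the symmetry of $\b\tau_\b w$, its $W^{1,\sob'}$ regularity (giving single-valued normal traces on internal faces), and the fact that $\b v_F=\b 0$ on boundary faces let me insert $\pm\b v_F$ freely in the face integrals and obtain jumps $\b v_T-\b v_F$. On the discrete side, \eqref{eq:G:proj} gives $\dgrads{k}{T}\I{T}{k}\b w=\PROJ{T}{k}(\GRADs\b w)$; combining the $L^2$-orthogonality of $\PROJ{T}{k}$ against $\dgrads{k}{T}\bu v_T\in\Poly^k(T,\Ms d)$ (and against $\GRADs\b v_T\in\Poly^{k-1}(T,\Ms d)$) with definition \eqref{eq:G} of $\dgrads{k}{T}$ tested with $\PROJ{T}{k}\b\tau_\b w$ allows me to recast the $\dgrads{k}{T}$-part of $\cst{a}_h$. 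Summing the two identities produces the clean decomposition
\begin{equation*}
\mathcal E_{\cst a,h}(\b w;\bu v_h) = \mathfrak T_1+\mathfrak T_2 + \gamma\,\cst{s}_h(\I{h}{k}\b w,\bu v_h),
\end{equation*}
where $\mathfrak T_1\coloneqq\sum_{T\in\T_h}\int_T[\stress(\cdot,\PROJ{T}{k}\GRADs\b w)-\stress(\cdot,\GRADs\b w)]:\dgrads{k}{T}\bu v_T$ and $\mathfrak T_2\coloneqq\sum_{T\in\T_h}\sum_{F\in\F_T}\int_F((\PROJ{T}{k}\b\tau_\b w-\b\tau_\b w)\b n_{TF})\cdot(\b v_F-\b v_T)$.

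For $\mathfrak T_1$, I apply the pointwise H\"older continuity \eqref{eq:power-framed:s.holder.continuity} with $\b\tau=\PROJ{T}{k}\GRADs\b w$ and $\b\eta=\GRADs\b w$, then a continuous $(1;\tfrac{\sob}{\sob-\sing},\tfrac{\sob}{\sing-1},\sob)$-H\"older inequality on each element followed by its discrete counterpart over $\T_h$; the $L^\sob$-stability of $\PROJ{T}{k}$ absorbs the first factor, \eqref{eq:proj:app:T} with $(n,m)=(k+1,0)$ yields $\|\PROJ{T}{k}\GRADs\b w-\GRADs\b w\|_{L^\sob(T,\M d)}^{\sing-1}\lesssim h_T^{(k+1)(\sing-1)}|\b w|_{W^{k+2,\sob}(T,\R^d)}^{\sing-1}$, and \eqref{eq:sh:stability.boundedness} gives $\|\dgrads{k}{h}\bu v_h\|_{L^\sob(\Omega,\M d)}\lesssim \|\bu v_h\|_{\strain,\sob,h}=1$, producing the first summand of \eqref{eq:consistency:ah}. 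For $\mathfrak T_2$, a Cauchy--Schwarz on each face together with the trace approximation \eqref{eq:proj:app:F} at order $n=(k+1)(\sing-1)$ gives $\|\PROJ{T}{k}\b\tau_\b w-\b\tau_\b w\|_{L^{\sob'}(F,\M d)}\lesssim h_T^{(k+1)(\sing-1)-1/\sob'}|\b\tau_\b w|_{W^{(k+1)(\sing-1),\sob'}(T,\M d)}$; since $\|\b v_F-\b v_T\|_{L^\sob(F,\R^d)}$ carries a factor $h_F^{1/\sob'}$ against the $\|\cdot\|_{\strain,\sob,T}$-seminorm, the $h$-powers combine to $h^{(k+1)(\sing-1)}$ and a discrete $(1;\sob',\sob)$-H\"older over elements and faces (using mesh regularity to bound $\card\F_T$) delivers the second summand. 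Finally, $\gamma\,\cst{s}_h(\I{h}{k}\b w,\bu v_h)$ is handled directly by \eqref{eq:sh:consist} together with $\gamma\le\sigma_\cst{hc}$ and absorbed into the first summand.

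The main technical hurdle is the face bound in $\mathfrak T_2$: estimate \eqref{eq:proj:app:F} is stated only for $n\ge 1$, but $(k+1)(\sing-1)$ can drop below $1$ (e.g.\ when $\sob$ is close to $1$ and $k=1$). I would overcome this by invoking the remark following \eqref{eq:proj:app}, which extends the approximation properties of $\PROJ{T}{k}$ to non-integer regularity via Sobolev interpolation. A secondary bookkeeping step is to decouple $\sigma_\cst{de}^\sob$ from $|\GRADs\b w|^\sob$ inside $(\sigma_\cst{de}^\sob+|\GRADs\b w|^\sob)^{(\sob-\sing)/\sob}$ via the sub-additivity $(a+b)^\alpha\le a^\alpha+b^\alpha$ valid for $\alpha\in(0,1]$, ensuring the right-hand side matches \eqref{eq:consistency:ah} exactly.
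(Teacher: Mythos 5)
Your proposal is correct and follows essentially the same route as the paper: element-wise integration by parts combined with the commutation property \eqref{eq:G:proj} and the single-valuedness of the normal traces to isolate a face-residual term, the H\"older continuity \eqref{eq:power-framed:s.holder.continuity} of $\stress$ together with the approximation properties \eqref{eq:proj:app} for the volumetric terms, and \eqref{eq:sh:consist} for the stabilization. The only (harmless) difference is that your decomposition has three terms where the paper's has four: the paper's extra term $\mathcal T_2=\int_\Omega(\stress(\cdot,\GRADs\b w)-\PROJ{h}{k}\stress(\cdot,\GRADs\b w)):\dgrads{k}{h}\bu v_h$ is in fact annihilated by the $L^2$-orthogonality of $\PROJ{h}{k}$, which you exploit implicitly, and your final remark about decoupling $\sigma_\cst{de}^\sob$ is unnecessary since \eqref{eq:consistency:ah} keeps the sum $\sigma_\cst{de}^\sob+|\b w|_{W^{1,\sob}(\Omega,\R^d)}^\sob$ intact.
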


\begin{proof}
  Let $\bu{\hat w}_h \coloneqq \bI{h}{k} \b w$ and $\bu v_h \in \bdU{h,0}{k}$.
  Expanding $\cst{a}_h$ according to its definition \eqref{eq:ah} in the expression \eqref{eq:Eah} of $\mathcal E_{\cst{a},h}$, inserting $\pm\left(
  \int_\Omega\stress(\cdot,\GRADs \b w): \dgrads{k}{h}\bu v_h
  +  \int_\Omega\PROJ{h}{k}\stress(\cdot,\GRADs \b w) : \dgrads{k}{h}\bu v_h
  \right)$, and rearranging, we obtain
  \begin{multline}\label{eq:consistency:ah:EJ}
    \mathcal E_{\cst{a},h}(\b w;\bu v_h)
    =
    \\
    \underbrace{%
      \int_\Omega (\DIV \stress(\cdot,\GRADs \b w)) \cdot \b v_h
      {+} \int_\Omega \PROJ{h}{k}\stress(\cdot,\GRADs \b w) : \dgrads{k}{h}\bu v_h
    }_{\mathcal T_1}    
    + \cancel{
      \int_\Omega \left( \stress(\cdot,\GRADs \b w) - \PROJ{h}{k}\stress(\cdot,\GRADs \b w)\right) : \dgrads{k}{h}\bu v_h}
    \\
    + \underbrace{%
      \int_\Omega \left( \stress(\cdot,\dgrads{k}{h} \bu{\hat w}_h) - \stress(\cdot,\GRADs \b w)\right): \dgrads{k}{h}\bu v_h
    }_{\mathcal T_2}
    + \underbrace{\vphantom{\int_\Omega}\gamma \cst{s}_h(\bu{\hat w}_h,\bu v_h)}_{\mathcal T_3},
  \end{multline}
  where have used the definition \eqref{eq:proj} of $\PROJ{h}{k}$ together with the fact that $\dgrads{k}{h}\bu v_h\in \Poly^{k}(\T_h,\Ms{d})$ in the cancellation.
  We proceed to estimate the terms in the right-hand side.
  For the first term, we start by noticing that
  \begin{equation}\label{eq:consistency:ah:null}
    \sum_{T \in \T_h}\sum_{F \in \F_T} \int_F  \b v_F \cdot\left(\stress(\cdot,\GRADs \b w)\b n_{TF}\right) = 0
  \end{equation}
  as a consequence of the continuity of the normal trace of $\stress(\cdot,\GRADs \b w)$ together with the single-valuedness of $\b v_F$ across each interface $F\in\Fi$ and of the fact that $\b v_F=\b 0$ for every boundary face $F\in\Fb$.
  Using an element by element integration by parts on the first term of $\mathcal T_1$ along with the definitions \eqref{eq:Gh} of $\dgrads{k}{h}$ and \eqref{eq:G} of $\dgrads{k}{T}$, we can write
  \[
  \begin{aligned}
    \mathcal T_1
    &= \cancel{\int_\Omega \left(\PROJ{h}{k}\stress(\cdot,\GRADs \b w)- \stress(\cdot,\GRADs \b w)\right) : \brkGRADs \b v_h} \\
    &\qquad + \sum_{T \in \T_h}\sum_{F \in \F_T} \left(\int_F  (\b v_F-\b v_T)\cdot(\PROJ{T}{k}\stress(\cdot,\GRADs \b w))\b n_{TF}+\int_F\b v_T\cdot\left(\stress(\cdot,\GRADs \b w)\b n_{TF}\right) \right) \\
    &=  \sum_{T \in \T_h}\sum_{F \in \F_T} \int_F  (\b v_F-\b v_T)\cdot\left(\PROJ{T}{k}\stress(\cdot,\GRADs \b w)-\stress(\cdot,\GRADs \b w)\right)\b n_{TF},
  \end{aligned}
  \]
  where we have used the definition \eqref{eq:proj} of $\PROJ{h}{k}$ together with the fact that $\brkGRADs \b v_h \in \Poly^{k-1}(\T_h,\Ms{d}) \subset \Poly^{k}(\T_h,\Ms{d})$ to cancel the term in the first line,
  and we have inserted \eqref{eq:consistency:ah:null} and rearranged to conclude.
  Therefore, applying the H\"{o}lder inequality together with the bound $h_F \le h_T$, we infer
  \begin{equation}\label{eq:consistency:ah:T1}
    \begin{aligned}
      \left|\mathcal T_1\right|
      &\le \left(\displaystyle\sum_{T \in \T_h}h_T \|\stress(\cdot,\GRADs \b w)- \PROJ{T}{k}\stress(\cdot,\GRADs \b w) \|_{L^{\sob'}(\partial T,\M{d})}^{\sob'} \right)^\frac{1}{\sob'}\left(\sum_{T \in \T_h}\sum_{F \in \F_T}h_F^{1-\sob}\| \b v_F-\b v_T\|_{L^\sob(F,\R^d)}^\sob\right)^\frac{1}{\sob}
      \\
      &\lesssim h^{(k+1)(\sobs-1)} |\stress(\cdot, \GRADs \b w)|_{W^{(k+1)(\sobs-1),\sob'}(\T_h,\M{d})}\| \bu v_h \|_{\sob,h},
    \end{aligned}
  \end{equation}
  where the conclusion follows using the $((k+1)(\sobs-1),\sob')$-trace approximation properties \eqref{eq:proj:app:F} of $\PROJ{T}{k}$ along with $h_T \le h$ for the first factor and the definition \eqref{eq:norm.epsilon.r} of the $\|{\cdot}\|_{\sob,h}$-norm for the second.
   
 For the second term, using the H\"older inequality and again \eqref{eq:sh:stability.boundedness}, we get
  \begin{equation}\label{eq:consistency:ah:T3:0}
    \left|\mathcal T_2\right|
    \le \|\stress(\cdot,\dgrads{k}{h} \bu{\hat w}_h)- \stress(\cdot,\GRADs \b w) \|_{L^{\sob'}(\Omega,\M{d})}\| \bu v_h \|_{\sob,h}.
  \end{equation}
  We estimate the first factor as follows:
  \[
    \begin{aligned}
      &\|\stress(\cdot,\dgrads{k}{h} \bu{\hat w}_h)- \stress(\cdot,\GRADs \b w) \|_{L^{\sob'}(\Omega,\M{d})}
      \\
      &\quad
      \le \sigma_\cst{hc} \left\| \left(\sigma_\cst{de}^\sob+| \dgrads{k}{h} \bu{\hat w}_h |_{d \times d}^\sob  +  | \GRADs \b w |_{d \times d}^\sob\right)^\frac{\sob-\sobs}{\sob}| \dgrads{k}{h} \bu{\hat w}_h - \GRADs \b w |_{d \times d}^{\sobs-1}\right\|_{L^{\sob'}(\Omega,\R)} \\
      &\quad
      \lesssim \sigma_\cst{hc} \left(\sigma_\cst{de}^\sob+\| \dgrads{k}{h} \bu{\hat w}_h \|_{L^\sob(\Omega,\M{d})}^\sob+\|  \GRADs \b w \|_{L^\sob(\Omega,\M{d})}^\sob\right)^\frac{\sob-\sobs}{\sob}\| \dgrads{k}{h} \bu{\hat w}_h - \GRADs \b w \|_{L^\sob(\Omega,\M{d})}^{\sobs-1} \\
      &\quad
      \lesssim \sigma_\cst{hc}\left(\sigma_\cst{de}^\sob+\|\bu{\hat w}_h \|_{\sob,h}^\sob  +  |\b w|_{W^{1,\sob}(\Omega,\R^d)}^\sob\right)^\frac{\sob-\sobs}{\sob}\| \PROJ{h}{k}(\GRADs \b w) - \GRADs \b w \|_{L^\sob(\Omega,\M{d})}^{\sobs-1} \\
      &\quad
      \lesssim h^{(k+1)(\sobs-1)}\sigma_\cst{hc}\left(\sigma_\cst{de}^\sob+|\b w|_{W^{1,\sob}(\Omega,\R^d)}^\sob\right)^\frac{\sob-\sobs}{\sob}|\b w|_{W^{k+2,\sob}(\T_h,\R^d)}^{\sobs-1},
    \end{aligned}
  \]
  where we have used the H\"older continuity \eqref{eq:power-framed:s.holder.continuity} of $\stress$ in the first bound,
  the $(\sob';\frac{\sob}{\sob-\sobs},\frac{\sob}{\sobs-1})$-H\"{o}lder inequality \eqref{eq:holder} in the second,
  the boundedness of $\Omega$ along with \eqref{eq:sh:stability.boundedness} and the commutation property \eqref{eq:G:proj} of $\dgrads{k}{h}$ in the third,
  and we have concluded invoking the $(k+1,\sob,0)$-approximation property \eqref{eq:proj:app:T} of $\PROJ{T}{k}$.
  Plugging this estimate into \eqref{eq:consistency:ah:T3:0}, we get
  \begin{equation}\label{eq:consistency:ah:T3}
    \left|\mathcal T_2\right|
    \lesssim h^{(k+1)(\sobs-1)}\sigma_\cst{hc}\left(\sigma_\cst{de}^\sob+|\b w|_{W^{1,\sob}(\Omega,\R^d)}^\sob\right)^\frac{\sob-\sobs}{\sob}|\b w|_{W^{k+2,\sob}(\T_h,\R^d)}^{\sobs-1}
    \| \bu v_h \|_{\sob,h}.
  \end{equation}

  Finally, using the fact that $\gamma \le \sigma_\cst{hc}$ together with the consistency \eqref{eq:sh:consist} of $\cst{s}_h$ and the norm equivalence \eqref{eq:sh:stability.boundedness}, we obtain for the third term
  \begin{equation}\label{eq:consistency:ah:T4}
    \left|\mathcal T_3\right|
    \lesssim h^{(k+1)(\sobs-1)}\sigma_\cst{hc}|\b w|_{W^{1,\sob}(\Omega,\R^d)}^{\sob-\sobs}|\b w|_{W^{k+2,\sob}(\T_h,\R^d)}^{\sobs -1}
    \| \bu v_h \|_{\sob,h}.
  \end{equation}

  Plug the bounds \eqref{eq:consistency:ah:T1}, \eqref{eq:consistency:ah:T3}, and \eqref{eq:consistency:ah:T4} into \eqref{eq:consistency:ah:EJ} and pass to the supremum to conclude.
\end{proof}

\subsubsection{Consistency of the pressure-velocity coupling bilinear form}

\begin{lemma}[Consistency of $\cst{b}_h$] Let $q \in W^{1,\sob'}(\Omega,\R) \cap W^{(k+1)(\sobs-1),\sob'}(\T_h,\R)$. 
Let $\mathcal E_{\cst{b},h}(q;\cdot) : \bdU{h}{k} \to \R$ be the pressure consistency error linear form such that, for all $\bu v_h \in \bdU{h}{k}$,
  \begin{equation}\label{eq:Ebh}
    \mathcal E_{\cst{b},h}(q;\bu v_h) \coloneqq \int_\Omega \GRAD q \cdot \b v_h - \cst{b}_h(\bu v_h,\proj{h}{k} q).
  \end{equation}
  Then, we have that
  \begin{equation}\label{eq:consistency:bh}
    \sup\limits_{\bu v_h \in \bdU{h,0}{k},\| \bu v_h \|_{\sob,h} = 1} \mathcal E_{\cst{b},h}(q;\bu v_h) \lesssim h^{(k+1)(\sobs-1)} |q|_{W^{(k+1)(\sobs-1),\sob'}(\T_h,\R)}.
  \end{equation}
\end{lemma}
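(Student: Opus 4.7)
The plan is to derive a closed-form identity for $\mathcal E_{\cst{b},h}(q;\bu v_h)$ in which the volume contributions cancel and only a face remainder involving $\proj{T}{k} q - q$ survives, then bound it by a H\"older argument combined with the trace approximation property \eqref{eq:proj:app:F} of the local $L^2$-projector.

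First, I would perform an element-wise integration by parts on the volume integral to get
\[
\int_\Omega \GRAD q \cdot \b v_h = -\sum_{T \in \T_h} \int_T q\,\div \b v_T + \sum_{T \in \T_h}\sum_{F \in \F_T} \int_F q\,(\b v_T \cdot \b n_{TF}).
\]
The single-valuedness of $q$ across interior faces (a consequence of $q \in W^{1,\sob'}(\Omega,\R)$), the single-valuedness of $\b v_F$ on $\Fi$, and the boundary condition $\b v_F = \b 0$ on $\Fb$ yield $\sum_T\sum_F \int_F q\,(\b v_F \cdot \b n_{TF}) = 0$, which I insert to recast the face sum as $-\sum_T\sum_F \int_F q\,((\b v_F - \b v_T)\cdot \b n_{TF})$. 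On the coupling side, I would expand $\cst{b}_h$ via \eqref{eq:bh} and invoke the characterization \eqref{eq:D} of $\ddiv{k}{T}$ tested against $\proj{T}{k} q \in \Poly^k(T,\R)$; the orthogonality of $\proj{T}{k}$ against $\div\b v_T \in \Poly^{k-1}(T,\R)$ removes $\proj{T}{k}$ from the volume contribution, so that adding the two identities gives, after cancellation,
\[
\mathcal E_{\cst{b},h}(q;\bu v_h) = \sum_{T \in \T_h}\sum_{F \in \F_T} \int_F ((\b v_F - \b v_T)\cdot\b n_{TF})(\proj{T}{k} q - q).
\]

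To bound this expression, I would apply a face-local $(1;\sob,\sob')$-H\"older inequality, redistribute a factor $h_F^{\pm 1/\sob'}$, and invoke the discrete H\"older inequality so as to recognize the jump contribution to $\|\bu v_h\|_{\strain,\sob,h}$ in the first factor. What remains is $\bigl(\sum_{T\in\T_h} h_T \|\proj{T}{k} q - q\|^{\sob'}_{L^{\sob'}(\partial T,\R)}\bigr)^{1/\sob'}$, using $h_F \le h_T$; I would estimate this by applying the $((k+1)(\sing-1),\sob')$-trace approximation property \eqref{eq:proj:app:F} element by element, which contributes $h_T^{(k+1)(\sing-1)-1/\sob'}$ locally and, after absorbing the outer $h_T^{1/\sob'}$ and using $h_T\le h$, produces exactly $h^{(k+1)(\sing-1)}|q|_{W^{(k+1)(\sing-1),\sob'}(\T_h,\R)}$. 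Passing to the supremum over unit-norm $\bu v_h$ yields the announced bound.

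The main subtlety is the regularity index: \eqref{eq:proj:app:F} is stated for $n \ge 1$, whereas $(k+1)(\sing-1)$ may fall below $1$ when $\sob$ is close to $1$ (since $\sing = \min(\sob,2)$). I would rely on the extension of the approximation properties to non-integer Sobolev exponents via interpolation, as pointed out after \eqref{eq:proj:app} in the paper, to exploit directly the regularity $q \in W^{(k+1)(\sing-1),\sob'}(\T_h,\R)$ postulated in the statement; the assumption $q\in W^{1,\sob'}(\Omega,\R)$ in the hypothesis is precisely what makes the initial integration-by-parts manipulation and the cancellation of the interior-face normal jumps licit.
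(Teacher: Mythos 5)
Your proposal is correct and follows essentially the same route as the paper's proof: element-wise integration by parts, insertion of the vanishing single-valued face term to introduce $\b v_F$, cancellation of the volume contributions via the orthogonality \eqref{eq:proj} of $\proj{T}{k}$ against $\div\b v_T\in\Poly^{k-1}(T,\R)$, and a weighted H\"older bound concluded with the trace approximation property \eqref{eq:proj:app:F}. Your explicit remark on the possibly non-integer, sub-unit regularity index $(k+1)(\sing-1)$ and its resolution by interpolation is a point the paper leaves implicit, but it does not change the argument.
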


\begin{proof} Let $\bu v_h \in \bdU{h,0}{k}$.
  Integrating by parts element by element, we can reformulate the first term in the right-hand side of \eqref{eq:Ebh} as follows:
  \begin{equation}\label{eq:consistency:bh:1}
    \displaystyle\int_\Omega \GRAD q\cdot \b v_h = - \sum_{T \in \T_h} \left( \int_T q(\div \b v_T) +\sum_{F \in \F_T} \int_F q(\b v_F - \b v_T)\cdot \b n_{TF} \right),
  \end{equation}
  where the introduction of $\b v_F$ in the boundary term is justified by the fact that the jumps of $q$ vanish across interfaces by the assumed regularity and that $\b v_F=\b 0$ on every boundary face $F \in \Fb$.
  On the other hand, expanding, for each $T \in \T_h$, $\ddiv{k}{T}$ according to its definition \eqref{eq:D}, we get
  \begin{equation}\label{eq:consistency:bh:2}
    -\cst{b}_h(\bu v_h, \proj{h}{k} q) = \sum_{T \in \T_h} \left( \int_T \proj{T}{k} q~(\div \b v_T) +\sum_{F \in \F_T} \int_F \proj{T}{k} q~(\b v_F - \b v_T)\cdot \b n_{TF} \right).
  \end{equation}
  Summing \eqref{eq:consistency:bh:1} and \eqref{eq:consistency:bh:2} and observing that the first terms in parentheses cancel out by the definition \eqref{eq:proj} of $\proj{T}{k}$ since $\div \b v_T \in \Poly^{k-1}(T,\R) \subset \Poly^k(T,\R)$ for all $T \in\T_h$, we can write
  \[
  \begin{aligned}
    \mathcal E_{\cst{b},h}(q;\bu v_h) &= \sum_{T \in \T_h} \left( \cancel{\int_T (\proj{T}{k} q-q) (\div \b v_T)} +\sum_{F \in \F_T} \int_F (\proj{T}{k} q-q) (\b v_F - \b v_T)\cdot \b n_{TF} \right)   \\
    &\le \left(\sum_{T \in \T_h} h_T\| \proj{T}{k} q-q \|_{L^{\sob'}(\partial T,\R)}^{\sob'} \right)^\frac{1}{\sob'} \left(\sum_{T \in \T_h}\sum_{F \in \F_T} h_F^{1-\sob}\| \b v_F-\b v_T \|_{L^{\sob}(F,\R^d)}^{\sob} \right)^\frac{1}{\sob} \\
    &\lesssim h^{(k+1)(\sobs-1)} |q|_{W^{(k+1)(\sobs-1),\sob'}(\T_h,\R)} \| \bu v_h \|_{\sob,h},
  \end{aligned}
  \]
  where we have used the H\"{o}lder inequality along with $h_F\ge h_T$ whenever $F \in \F_T$ in the second line
  and the $((k+1)(\sobs-1),\sob')$-trace approximation property \eqref{eq:proj:app:F} of $\proj{T}{k}$ together with the bound $h_F \le h$ and the definition \eqref{eq:norm.epsilon.r} of the $\|{\cdot}\|_{\sob,h}$-norm to conclude.
  Passing to the supremum yields \eqref{eq:consistency:bh}.
\end{proof}

\subsubsection{Proof of Theorem \ref{thm:error.estimate}}

\begin{proof}[Proof of Theorem \ref{thm:error.estimate}]
  Let $(\bu e_h, \epsilon_h) \coloneqq (\bu u_h - \bu{\hat u}_h,p_h - \hat p_h) \in \bdU{h,0}{k} \times \dP{h}{k}$ where $\bu{\hat u}_h \coloneqq\bI{h}{k} \b u \in \bdU{h,0}{k}$ and $\hat p_h \coloneqq \proj{h}{k} p \in \dP{h}{k}$.
  \medskip\\
  \textbf{Step 1.} \emph{Consistency error.} Let $\mathcal E_h : \bdU{h,0}{k} \to \R$ be the consistency error linear form such that, for all $\bu v_h \in \bdU{h,0}{k}$,
  \begin{equation}\label{eq:Eh}
    \mathcal E_h(\bu v_h) \coloneqq \int_\Omega \b f \cdot \b v_h - \cst{a}_h(\bu{\hat u}_h,\bu v_h)-\cst{b}_h(\bu v_h,{\hat p}_h).
  \end{equation}
  Using in the above expression the fact that $\b f = -\DIV \stress(\cdot,\GRADs \b u)+\GRAD p$ almost everywhere in $\Omega$ to write $\mathcal E_h(\bu v_h)=\mathcal E_{\cst{a},h}(\b u;\bu v_h) + \mathcal E_{\cst{b},h}(p;\bu v_h)$, and invoking the consistency properties \eqref{eq:consistency:ah} of $\cst{a}_h$ and  \eqref{eq:consistency:bh} of $\cst{b}_h$, we obtain
  \begin{equation}\label{eq:error.estimate:step1:eh0}
    \$\coloneq
    \sup\limits_{\bu v_h \in \bdU{h,0}{k},\| \bu v_h \|_{\sob,h} = 1}\mathcal E_h(\bu v_h)
    \lesssim
    h^{(k+1)(\sobs-1)} \mathcal N_{\stress,\b u,p}.
  \end{equation}
  \\
  \textbf{Step 2.} \emph{Error estimate for the velocity.}  
  Using the strong monotonicity \eqref{eq:ah:strong.monotonicity} of $\cst{a}_h$, we get
  \begin{equation}\label{eq:error.estimate:step2:eh0}
    \begin{aligned}
      \| \bu e_h \|_{\sob,h}^{\sob+2-\sobs}
      &\lesssim \sigma_\cst{sm}^{-1}\left(
      \sigma_\cst{de}^\sob+\| \bu u_h \|_{\sob,h}^\sob+\|\bu{\hat u}_h\|_{\sob,h}^\sob
      \right)^\frac{2-\sobs}{\sob}\left(
      \cst{a}_h(\bu u_h,\bu e_h)-\cst{a}_h(\bu{\hat u}_h,\bu e_h)
      \right) \\
      &\lesssim \sigma_\cst{sm}^{-1}\mathcal N_{\b f}^{2-\sobs}\left(
      \cst{a}_h(\bu u_h,\bu e_h)-\cst{a}_h(\bu{\hat u}_h,\bu e_h)
      \right),
    \end{aligned}
  \end{equation}
  where we have used the a priori bound \eqref{eq:discrete.solution:bounds:uh} on the discrete solution along with the boundedness \eqref{eq:fortin:boundedness} of the global interpolator and the a priori bound \eqref{eq:continuous.solution:bounds:uh} on the continuous solution to conclude.
  Using then the discrete mass equation \eqref{eq:stokes.discrete:mass} along with \eqref{eq:fortin:consistency} (written for $\b v=\b u$) and the continuous mass equation \eqref{eq:stokes.weak:mass} to write $\cst{b}_h(\bI{h}{k} \b u,q_h) = b(\b u,q_h) = 0$, we get $\cst{b}_h(\bu e_h,q_h) = 0$ for all $q_h \in P^k_h$.
  Hence, combining this result with \eqref{eq:Eh} and the discrete momentum equation \eqref{eq:stokes.discrete:momentum} (with $\bu v_h = \bu e_h$), we obtain
  \begin{equation}\label{eq:error.estimate:step1:eh1}
    \cst{a}_h(\bu u_h,\bu e_h)-\cst{a}_h(\bu{\hat u}_h,\bu e_h)
    = \int_\Omega \b f \cdot \b e_h - \cst{a}_h(\bu{\hat u}_h,\bu e_h)-\cancel{\cst{b}_h(\bu e_h,p_h)}
    = \mathcal E_h(\bu e_h).
  \end{equation}
  Plugging \eqref{eq:error.estimate:step1:eh1} into \eqref{eq:error.estimate:step2:eh0}, we get
  \[
  \| \bu e_h \|_{\sob,h}^{\sob+2-\sobs}
  \le \sigma_\cst{sm}^{-1}\mathcal N_{\b f}^{2-\sobs}\$\| \bu e_h \|_{\sob,h}.
  \]
  Simplifying, using \eqref{eq:error.estimate:step1:eh0}, and taking the $(\sob+1-\sobs)$th root of the resulting inequality yields \eqref{eq:error.estimate:velocity}.
  \medskip\\
  \textbf{Step 3.} \emph{Error estimate for the pressure.}
  Using the H\"older continuity \eqref{eq:ah:holder.continuity} of $\cst{a}_h$, we have, for all $\bu v_h \in \bdU{h,0}{k}$,
  \begin{equation}\label{eq:error.estimate:step3:ah}
   \begin{aligned}
     \left|\cst{a}_h(\bu{\hat u}_h,\bu v_h)-\cst{a}_h(\bu u_h,\bu v_h)\right|
     &\lesssim \sigma_\cst{hc}\left( \sigma_\cst{de}^\sob+  \| \bu{\hat u}_h \|_{\sob,h}^\sob+\| \bu u_h \|_{\sob,h}^\sob\right)^\frac{\sob-\sobs}{\sob}\| \bu e_h \|_{\sob,h}^{\sobs-1}\| \bu v_h \|_{\sob,h} \\
  &\lesssim \sigma_\cst{hc}\mathcal N_{\b f}^{\sob-\sobs}\| \bu e_h \|_{\sob,h}^{\sobs-1}\| \bu v_h \|_{\sob,h},
  \end{aligned}
  \end{equation}
  where the first factor is estimated as in \eqref{eq:error.estimate:step2:eh0}.
   Thus, using the inf-sup condition \eqref{eq:bh:inf-sup}, we can write
  \begin{equation}\label{eq:error.estimate:step3:epsilonh}
    \begin{aligned}
      \| \epsilon_h \|_{L^{\sob'}(\Omega,\R)} &\lesssim \sup\limits_{\bu v_h \in \bdU{h,0}{k},\| \bu v_h \|_{\sob,h} = 1} \cst{b}_h(\bu v_h,\epsilon_h) \\
      &= \sup\limits_{\bu v_h \in \bdU{h,0}{k},\| \bu v_h \|_{\sob,h} = 1} \left(\mathcal E_h(\bu v_h)+\cst{a}_h(\bu{\hat u}_h,\bu v_h)-\cst{a}_h(\bu u_h,\bu v_h)\right)\\
      &\lesssim \$+\sigma_\cst{hc}\mathcal N_{\b f}^{\sob-\sobs}\| \bu e_h \|_{\sob,h}^{\sobs-1} \\
      &\lesssim  h^{(k+1)(\sobs-1)}\mathcal N_{\stress,\b u,p}
      +h^{(k+1)(\sobs-1)^2}\sigma_\cst{hc}\mathcal N_{\b f}^{|\sob-2|(\sobs-1)}    
      \left(\sigma_\cst{sm}^{-1}\mathcal N_{\stress,\b u,p}\right)^\frac{\sobs-1}{\sob+1-\sobs},
    \end{aligned}
  \end{equation}
  where we have used the definition \eqref{eq:Eh} of the consistency error together with equation \eqref{eq:stokes.discrete:momentum} to pass to the second line, \eqref{eq:error.estimate:step3:ah} to pass to the third line (recall that $\$$ denotes here the supremum in the left-hand side of \eqref{eq:error.estimate:step1:eh0}), and the bounds \eqref{eq:error.estimate:step1:eh0} and \eqref{eq:error.estimate:velocity} (proved in Step 2) to conclude.
\end{proof}

\appendix

\section{Power-framed functions}\label{sec:properties.stress}

In the following theorem, we introduce the notion of power-framed function and discuss sufficient conditions for this property to hold.

\begin{theorem}[Power-framed function]\label{thm:1d.power-framed}
  Let $U$ be a measurable subset of $\R^n$ with $n\ge1$, $(W,(\cdot,\cdot)_W)$ an inner product space, and $\b \sigma : U \times W \to W$. Assume that there exists a Carath\'eodory function $\varsigma : U \times \lbrack0,\infty) \to \R$ such that, for all $\b\tau \in W$ and almost every $\b x \in U$,
  \begin{subequations}\label{eq:1d.power-framed:stress}
    \begin{equation}
      \stress(\b x,\b\tau) = \varsigma(\b x,\|\b\tau\|_W)\b\tau,
    \end{equation}
    where $\|{\cdot}\|_W$ is the norm induced by $(\cdot,\cdot)_W$. 
    Additionally assume that, for almost every $\b x \in U$, $\varsigma(\b x,\cdot)$ is differentiable on $(0,\infty)$ and there exist $\varsigma_\cst{de} \in \lbrack0,\infty)$ and $\varsigma_\cst{sm},\varsigma_\cst{hc} \in (0,\infty)$ independent of $\b x$ such that, for all $\alpha \in (0,\infty)$,
    \begin{align} 
      \varsigma_\cst{sm} (\varsigma_\cst{de}^\sob+\alpha^\sob)^\frac{\sob-2}{\sob} \leq \frac{\partial(\alpha\varsigma(\b x,\alpha))}{\partial \alpha} \leq \varsigma_\cst{hc}(\varsigma_\cst{de}^\sob+\alpha^\sob)^\frac{\sob-2}{\sob}. \label{eq:1d.power-framed:eta}
    \end{align}
  \end{subequations}
  Then, $\stress$ is an \emph{$\sob$-power-framed function}, i.e., for all $(\b\tau,\b\eta) \in W^2$ with $\b\tau \neq \b\eta$ and almost every $\b x \in U$, the function $\stress$ verifies the H\"older continuity property
  \begin{subequations}\label{eq:od:power-framed:holder.continuity.strong.monotonicity}
    \begin{equation} 
      \|\stress(\b x,\b\tau)-\stress(\b x,\b\eta)\|_W \le \sigma_\cst{hc} \left(\sigma_\cst{de}^\sob+\|\b\tau\|_W^\sob+\|\b\eta\|_W^\sob\right)^\frac{\sob-2}{\sob}\| \b\tau-\b\eta \|_W,\label{eq:od:power-framed:holder.continuity} 
    \end{equation}
    and the strong monotonicity property
    \begin{equation}
      \left(\stress(\b x,\b\tau)-\stress(\b x,\b\eta),\b\tau-\b\eta\right)_W \ge \sigma_\cst{sm}\left(\sigma_\cst{de}^\sob+\|\b\tau\|_W^\sob+\|\b\eta\|_W^\sob\right)^\frac{\sob-2}{\sob}\|\b\tau-\b\eta\|_W^{2},\label{eq:od:power-framed:strong.monotonicity}
    \end{equation}
  \end{subequations}  
with $\sigma_\cst{de} \coloneqq \varsigma_\cst{de}$, 
$\sigma_\cst{hc} \coloneqq 2^{2-\sobs+\sob^{-1}\left\lceil\hspace{0.02cm}2-\sobs \right\rceil}(\sobs-1)^{-1} \varsigma_\cst{hc}$, and 
$\sigma_\cst{sm} \coloneqq 2^{\sobs-\sob-\left\lceil\sob^{-1}(\sob-\sobs)\right\rceil}(\sob+1-\sobs)^{-1} \varsigma_\cst{sm}$, where $\sobs$ is given by \eqref{eq:sing} and $\lceil{\cdot}\rceil$ is the ceiling function.
\end{theorem}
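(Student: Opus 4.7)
The plan is to reduce the vectorial inequalities on $\stress$ to scalar inequalities involving $\varsigma$ and $\partial_\alpha(\alpha\varsigma)$, exploiting the radial structure $\stress(\b x,\b\tau) = \varsigma(\b x,\|\b\tau\|_W)\b\tau$, and then to integrate along the segment joining $\b\eta$ to $\b\tau$. Since the claim only involves the two-dimensional subspace $\mathrm{span}(\b\tau,\b\eta)$, I may assume $W$ is finite-dimensional. The first step is to compute the Fr\'echet derivative $D\stress(\b x,\b\xi)$ at an arbitrary $\b\xi\in W\setminus\{\b 0\}$. Decomposing any $\b v \in W$ into its components parallel and orthogonal to $\b\xi$ shows that $D\stress(\b x,\b\xi)$ is self-adjoint with eigenvalue $\partial_\alpha(\alpha\varsigma(\b x,\alpha))|_{\alpha=\|\b\xi\|_W}$ of multiplicity one (in the radial direction) and eigenvalue $\varsigma(\b x,\|\b\xi\|_W)$ on the orthogonal complement. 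Hypothesis \eqref{eq:1d.power-framed:eta} already provides two-sided bounds on the radial eigenvalue.

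To control the tangential eigenvalue I would use the identity $\alpha\varsigma(\b x,\alpha) = \int_0^\alpha \partial_\beta(\beta\varsigma(\b x,\beta))\,d\beta$, valid for $\alpha>0$ by the growth bound on $\partial_\alpha(\alpha\varsigma)$. Combined with \eqref{eq:1d.power-framed:eta} and a case split on whether $r\le 2$ or $r\ge 2$, this yields, after elementary integration of the monotone function $\beta\mapsto (\varsigma_{\mathrm{de}}^r+\beta^r)^{(r-2)/r}$, a two-sided estimate of the form
\[
\tilde c_{\mathrm{sm}}\,\varsigma_{\mathrm{sm}}\,(\varsigma_{\mathrm{de}}^r+\alpha^r)^{\frac{r-2}{r}} \;\le\; \varsigma(\b x,\alpha) \;\le\; \tilde c_{\mathrm{hc}}\,\varsigma_{\mathrm{hc}}\,(\varsigma_{\mathrm{de}}^r+\alpha^r)^{\frac{r-2}{r}},
\]
with constants $\tilde c_{\mathrm{sm}}$, $\tilde c_{\mathrm{hc}}$ depending only on $r$. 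The ceiling-function exponents appearing in the stated $\sigma_{\mathrm{hc}}$ and $\sigma_{\mathrm{sm}}$ arise precisely from tracking the sign of $r-2$ through this case split.

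With pointwise two-sided bounds on both eigenvalues, the final step writes
\[
\stress(\b x,\b\tau) - \stress(\b x,\b\eta) \;=\; \int_0^1 D\stress(\b x,\b\xi(t))(\b\tau-\b\eta)\,dt, \qquad \b\xi(t)\coloneqq\b\eta+t(\b\tau-\b\eta),
\]
and estimates the integrand eigenvalue by eigenvalue. The H\"older bound \eqref{eq:od:power-framed:holder.continuity} follows by bounding the operator norm $\|D\stress(\b x,\b\xi(t))\|$ by its largest eigenvalue. The strong monotonicity \eqref{eq:od:power-framed:strong.monotonicity} follows by pairing with $\b\tau-\b\eta$ and bounding the quadratic form below by the smallest eigenvalue. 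A mild technicality is that $\b\xi(t)$ may pass through $\b 0$; this is handled by noting that the set of such $t$ has measure zero and that the integrand extends by continuity to an $L^1$ function on $[0,1]$ thanks to the bounds above.

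The main obstacle will be estimating $\int_0^1 (\varsigma_{\mathrm{de}}^r+\|\b\xi(t)\|_W^r)^{(r-2)/r}\,dt$ in terms of $(\varsigma_{\mathrm{de}}^r+\|\b\tau\|_W^r+\|\b\eta\|_W^r)^{(r-2)/r}$. Since the exponent changes sign at $r=2$, the two regimes must be treated separately. For $r\ge 2$ the integrand is monotonically increasing in $\|\b\xi(t)\|_W$, and a straightforward triangle inequality $\|\b\xi(t)\|_W \le \|\b\tau\|_W + \|\b\eta\|_W$ suffices. For $r<2$ the integrand is singular at any $t$ with $\b\xi(t)=\b 0$, and a convexity-type lower bound of the form $\|\b\xi(t)\|_W^r + \|\b\xi(1-t)\|_W^r \gtrsim \|\b\tau\|_W^r + \|\b\eta\|_W^r$ is needed to keep the singular integral under control. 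The explicit powers of $2$ and ceiling exponents appearing in the stated $\sigma_{\mathrm{hc}}$ and $\sigma_{\mathrm{sm}}$ are the combined bookkeeping of this integration together with the Step~2 estimates.
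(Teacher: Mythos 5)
Your route---compute the Fr\'echet derivative of the radial map $\b\xi\mapsto\varsigma(\b x,\|\b\xi\|_W)\b\xi$, diagonalize it into a radial eigenvalue $\partial_\alpha(\alpha\varsigma)$ and a tangential eigenvalue $\varsigma$, and integrate $D\stress$ along the chord $\b\xi(t)=\b\eta+t(\b\tau-\b\eta)$---is genuinely different from the paper's. The paper never differentiates $\stress$ in $W$: it first proves the scalar two-sided bound $\alpha\varsigma(\b x,\alpha)-\beta\varsigma(\b x,\beta)\simeq(\varsigma_\cst{de}^\sob+\alpha^\sob+\beta^\sob)^{\frac{\sob-2}{\sob}}(\alpha-\beta)$ by checking that an auxiliary one-variable function is monotone, and then exploits the exact algebraic identities expressing $(\stress(\b x,\b\tau)-\stress(\b x,\b\eta),\b\tau-\b\eta)_W$ and $\|\stress(\b x,\b\tau)-\stress(\b x,\b\eta)\|_W^2$ through $\|\b\tau\|_W$, $\|\b\eta\|_W$ and the nonnegative quantity $\|\b\tau\|_W\|\b\eta\|_W-(\b\tau,\b\eta)_W$. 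This avoids the chord integral entirely, and in particular the degeneracy of $\varsigma$ along the segment, at the price of a final case split in the H\"older part. Your Step~2 (the two-sided bound on $\varsigma$ itself, obtained by averaging \eqref{eq:1d.power-framed:eta} over $[0,\alpha]$ and rescaling) is correct and plays the same role as the paper's bounds on $\varsigma(\b x,\alpha)+\varsigma(\b x,\beta)$ and $\varsigma(\b x,\alpha)\varsigma(\b x,\beta)$.

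There is, however, a concrete gap in your treatment of the singular chord integral for $\sob<2$. The inequality you propose, $\|\b\xi(t)\|_W^\sob+\|\b\xi(1-t)\|_W^\sob\gtrsim\|\b\tau\|_W^\sob+\|\b\eta\|_W^\sob$, is false: take $\b\eta=-\b\tau\neq\b 0$ and $t=\tfrac12$, so that $\b\xi(t)=\b\xi(1-t)=\b 0$ while the right-hand side equals $2\|\b\tau\|_W^\sob$. No pointwise lower bound on $\|\b\xi(t)\|_W$ in terms of $\|\b\tau\|_W,\|\b\eta\|_W$ can hold, precisely because the chord may pass through the origin. What your argument actually needs is the integrated statement $\int_0^1\big(\varsigma_\cst{de}^\sob+\|\b\xi(t)\|_W^\sob\big)^{\frac{\sob-2}{\sob}}\ud t\lesssim\big(\varsigma_\cst{de}^\sob+\|\b\tau\|_W^\sob+\|\b\eta\|_W^\sob\big)^{\frac{\sob-2}{\sob}}$, which is the classical Barrett--Liu/Diening--Ettwein ``shift'' lemma; its proof is not a convexity bound but a dichotomy: assuming $\|\b\tau\|_W\ge\|\b\eta\|_W$, either $\|\b\tau-\b\eta\|_W\le\tfrac12\|\b\tau\|_W$, in which case $\|\b\xi(t)\|_W\ge\tfrac12\|\b\tau\|_W$ for all $t$ and the bound is pointwise, or $\|\b\tau-\b\eta\|_W>\tfrac12\|\b\tau\|_W$, in which case one integrates the integrable singularity $|t-t_0|^{\sob-2}$ (here $\sob>1$ is essential) to obtain $\lesssim\tfrac{1}{\sob-1}\|\b\tau-\b\eta\|_W^{\sob-2}\lesssim\|\b\tau\|_W^{\sob-2}$. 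Until this lemma is supplied, the H\"older bound for $\sob<2$ is not established. (The remaining directions---the lower bound on the integral for $\sob<2$ and the upper bound for $\sob\ge2$---are indeed the easy triangle-inequality cases, as you say.) A secondary point: a chord-integration proof yields admissible constants, but will not reproduce the specific $\sigma_\cst{hc}$ and $\sigma_\cst{sm}$ displayed in the statement, whose ceiling-function exponents come from the paper's particular algebraic manipulations; asserting that the ``bookkeeping'' recovers them is not a substitute for carrying it out.
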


\begin{remark}[Notation]
  The boldface notation for the elements of $W$ is reminiscent of the fact that Theorem \ref{thm:1d.power-framed} is used with $W = \Ms{d}$ in Corollary \ref{cor:Carreau--Yasuda} to characterize the Carreau-Yasuda law as an $\sob$-power-framed function and in Lemma \ref{lem:sT} with $W = \R^d$ to study the local stabilization function $\cst{s}_T$.
\end{remark}

\begin{proof}[Proof of Theorem \ref{thm:1d.power-framed}]
  Let $\b x \in U$ be such that \eqref{eq:1d.power-framed:stress} holds, and $\b\tau,\b\eta \in W$. By symmetry of inequalities \eqref{eq:od:power-framed:holder.continuity.strong.monotonicity} and the fact that $\stress$ is continuous, we can assume, without loss of generality, that $\|\b\tau\|_W > \|\b\eta\|_W > 0$.
  \medskip\\
  (i) \emph{Strong monotonicity.}
  Let $\beta \in (0,\infty)$ and let $g : \lbrack\beta,\infty) \to \R$ be such that, for all $\alpha \in \lbrack\beta,\infty)$,
  \[
    g(\alpha) \coloneqq \alpha\varsigma(\b x,\alpha)-\beta\varsigma(\b x,\beta)-C_\cst{sm}(\varsigma_\cst{de}^\sob+\alpha^\sob+\beta^\sob)^\frac{\sob-2}{\sob}(\alpha-\beta),
  \;\text{ with }\ C_\cst{sm} \coloneqq \tfrac{2^{\sobs-\sob}}{\sob+1-\sobs}\varsigma_\cst{sm}.
  \]
  Differentiating $g$ and using the first inequality in \eqref{eq:1d.power-framed:eta}, we obtain, for all $\alpha \in \lbrack\beta,\infty)$, 
  \[
  \begin{aligned}
    \frac{\partial}{\partial \alpha}g(\alpha)
    &\geq \varsigma_\cst{sm}(\varsigma_\cst{de}^\sob+\alpha^\sob)^\frac{\sob-2}{\sob}-C_\cst{sm}\left((\sob-2)(\varsigma_\cst{de}^\sob+\alpha^\sob+\beta^\sob)^{-\frac{2}{\sob}}(\alpha-\beta)\alpha^{\sob-1}+(\varsigma_\cst{de}^\sob+\alpha^\sob+\beta^\sob)^\frac{\sob-2}{\sob}\right) \\
    &\geq \varsigma_\cst{sm}(\varsigma_\cst{de}^\sob+\alpha^\sob)^\frac{\sob-2}{\sob}-(\sob+1-\sobs)C_\cst{sm} (\varsigma_\cst{de}^\sob+\alpha^\sob+\beta^\sob)^\frac{\sob-2}{\sob} \\
    &\geq \varsigma_\cst{sm}2^{\sobs-\sob}(\varsigma_\cst{de}^\sob+\alpha^\sob+\beta^\sob)^\frac{\sob-2}{\sob}-(\sob+1-\sobs)C_\cst{sm} (\varsigma_\cst{de}^\sob+\alpha^\sob+\beta^\sob)^\frac{\sob-2}{\sob} = 0,
  \end{aligned}
  \]
  where, to pass to the second line, we have removed negative contributions if $\sob < 2$ and used the fact that $(\alpha-\beta)\alpha^{\sob-1} \le \varsigma_\cst{de}^\sob+\alpha^\sob+\beta^\sob$ if $\sob\ge 2$,
  to pass to the third line we have used the fact that $t \mapsto t^{\sob-2}$ is non-increasing if $\sob < 2$, and the fact that $\beta \le \alpha$ otherwise, while the conclusion follows from the definition of $C_\cst{sm}$.
  This shows that $g$ is non-decreasing. Hence, for all $\alpha\in\lbrack\beta,\infty)$, $g(\alpha)\geq g(\beta)=0$, i.e.
  \begin{equation}\label{eq:1d.power-framed:bound:1}
    \alpha\varsigma(\b x,\alpha)-\beta\varsigma(\b x,\beta) \geq C_\cst{sm}(\varsigma_\cst{de}^\sob+\alpha^\sob+\beta^\sob)^\frac{\sob-2}{\sob}(\alpha-\beta).
  \end{equation}
  Moreover, for all $\alpha,\beta \in (0,\infty)$, using \eqref{eq:1d.power-framed:bound:1} (with $\beta = 0$) along with the fact that $t \mapsto t^{\sob-2}$ is decreasing if $\sob < 2$ and inequality \eqref{eq:sum-power} if $\sob\ge 2$, we infer that
  \begin{equation}\label{eq:1d.power-framed:bound:2}
    \begin{aligned}
      \varsigma(\b x,\alpha)+\varsigma(\b x,\beta) &\ge C_\cst{sm}\left((\varsigma_\cst{de}^\sob+\alpha^\sob)^\frac{\sob-2}{\sob}+(\varsigma_\cst{de}^\sob+\beta^\sob)^\frac{\sob-2}{\sob}\right) \ge C_\cst{sm}2^{1-\left\lceil\frac{\sob-\sobs}{\sob}\right\rceil}(\varsigma_\cst{de}^\sob+\alpha^\sob+\beta^\sob)^\frac{\sob-2}{\sob}.
    \end{aligned}
  \end{equation}
  We conclude that $\stress$ verifies \eqref{eq:od:power-framed:strong.monotonicity} by using \eqref{eq:1d.power-framed:bound:1} and \eqref{eq:1d.power-framed:bound:2} with $\alpha = \|\b\tau\|_W$ and $\beta = \|\b\eta\|_W$ as follows:
  \[
  \begin{aligned}
    &(\stress(\b x,\b\tau)-\stress(\b x,\b\eta),\b\tau-\b\eta)_W
    \\
    &\quad= (\b\tau\varsigma(\b x,\|\b\tau\|_W)-\b\eta\varsigma(\b x,\|\b\eta\|_W),\b\tau-\b\eta)_W \\
    &\quad = \|\b\tau\|_W^2\varsigma(\b x,\|\b\tau\|_W)+\|\b\eta\|_W^2\varsigma(\b x,\|\b\eta\|_W) 
    -(\b\tau,\b\eta)_W \left[\varsigma(\b x,\|\b\tau\|_W) + \varsigma(\b x,\|\b\eta\|_W)\right] \\
    &\quad = \left[
      \|\b\tau\|_W\varsigma(\b x,\|\b\tau\|_W)-\|\b\eta\|_W\varsigma(\b x,\|\b\eta\|_W)
      \right](\|\b\tau\|_W-\|\b\eta\|_W)
    \\
    &\qquad
    + \left[
      \varsigma(\b x,\|\b\tau\|_W)+\varsigma(\b x,\|\b\eta\|_W)
      \right](\|\b\tau\|_W\|\b\eta\|_W-(\b\tau,\b\eta)_W)
    \\
    &\quad\geq  C_\cst{sm}2^{-\left\lceil\frac{\sob-\sobs}{\sob}\right\rceil}\left(
    \varsigma_\cst{de}^\sob+\|\b\tau\|_W^\sob+\|\b\eta\|_W^\sob
    \right)^\frac{\sob-2}{\sob}\left[
      (\|\b\tau\|_W-\|\b\eta\|_W)^2+2(\|\b\tau\|_W\|\b\eta\|_W-(\b\tau,\b\eta)_W)
      \right] \\
    &\quad= C_\cst{sm}2^{-\left\lceil\frac{\sob-\sobs}{\sob}\right\rceil}\left(
    \varsigma_\cst{de}^\sob+\|\b\tau\|_W^\sob+\|\b\eta\|_W^\sob
    \right)^\frac{\sob-2}{\sob}\|\b\tau-\b\eta\|_W^2.
  \end{aligned}
  \]
  \medskip\\
  (ii) \emph{H\"older continuity.}
  Now, setting $C_\cst{hc} \coloneqq \frac{\varsigma_\cst{hc}}{\sobs-1}$ and reasoning in a similar way as for the proof of \eqref{eq:1d.power-framed:bound:1} to leverage the second inequality in \eqref{eq:1d.power-framed:eta}, we have, for all $\alpha \in \lbrack\beta,\infty)$,
  \begin{equation}\label{eq:1d.power-framed:bound:3}
    \alpha\varsigma(\b x,\alpha)-\beta\varsigma(\b x,\beta) \le C_\cst{hc}\left(
    \varsigma_\cst{de}^\sob+\alpha^\sob+\beta^\sob
    \right)^\frac{\sob-2}{\sob}(\alpha-\beta).
  \end{equation}
  First, let $\sob \ge 2$. Using \eqref{eq:1d.power-framed:bound:3} (with $\beta=0$) and  the fact that $t \mapsto t^{\sob-2}$ is non-decreasing, we have, for all $\alpha,\beta \in (0,\infty)$,
  \begin{equation}\label{eq:1d.power-framed:bound:4}
      \varsigma(\b x,\alpha)\varsigma(\b x,\beta)
      \le C_\cst{hc}^2\left(
      \varsigma_\cst{de}^\sob+\alpha^\sob
      \right)^\frac{\sob-2}{\sob}\left(
      \varsigma_\cst{de}^\sob+\beta^\sob
      \right)^\frac{\sob-2}{\sob} \le \left[
        C_\cst{hc}\left(\varsigma_\cst{de}^\sob+\alpha^\sob+\beta^\sob\right)^\frac{\sob-2}{\sob}
        \right]^2.
  \end{equation}
  Thus, using inequalities \eqref{eq:1d.power-framed:bound:3} and \eqref{eq:1d.power-framed:bound:4} with $\alpha = \|\b\tau\|_W$ and $\beta = \|\b\eta\|_W$, we infer
  \begin{equation}\label{eq:1d.power-framed:bound:5}
    \begin{aligned}
      &\|\stress(\b x,\b\tau)-\stress(\b x,\b\eta)\|_W^2
      \\
      &\quad = \left(
      \b\tau\varsigma(\b x,\|\b\tau\|_W)-\b\eta\varsigma(\b x,\|\b\eta\|_W),
      \b\tau\varsigma(\b x,\|\b\tau\|_W)-\b\eta\varsigma(\b x,\|\b\eta\|_W)
      \right)_W
      \\
      &\quad = \left[\|\b\tau\|_W\varsigma(\b x,\|\b\tau\|_W)-\|\b\eta\|_W\varsigma(\b x,\|\b\eta\|_W)\right]^2
      \\
      &\quad \qquad + 2\varsigma(\b x,\|\b\tau\|_W)\varsigma(\b x,\|\b\eta\|_W)\left[
        \|\b\tau\|_W\|\b\eta\|_W-(\b\tau,\b\eta)_W
        \right]
      \\
      &\quad \le  \left[
        C_\cst{hc}\left(\varsigma_\cst{de}^\sob+\|\b\tau\|_W^\sob+\|\b\eta\|_W^\sob\right)^\frac{\sob-2}{\sob}
        \right]^2\left[
        (\|\b\tau\|_W-\|\b\eta\|_W)^2+2(\|\b\tau\|_W\|\b\eta\|_W-(\b\tau,\b\eta)_W)
        \right] \\
      &\quad = \left[
        C_\cst{hc}\left(\varsigma_\cst{de}^\sob+\|\b\tau\|_W^\sob+\|\b\eta\|_W^\sob\right)^\frac{\sob-2}{\sob}\|\b\tau-\b\eta\|_W
        \right]^2,
    \end{aligned}
  \end{equation}
  hence $\stress$ verifies \eqref{eq:od:power-framed:holder.continuity} for $\sob \ge 2$.
Assume now $\sob < 2$. Using a triangle inequality followed by \eqref{eq:1d.power-framed:bound:3} and the left inequality in \eqref{eq:sum-power}, it is inferred that
  \[
  \begin{aligned}
    \|\stress(\b x,\b\tau)-\stress(\b x,\b\eta)\|_W 
    &\leq \varsigma(\b x,\|\b\tau\|_W)\|\b\tau\|_W+\varsigma(\b x,\|\b\eta\|_W)\|\b\eta\|_W 
    \\
    & \leq C_\cst{hc}\left((\varsigma_\cst{de}^\sob+\|\b\tau\|_W^\sob)^\frac{\sob-1}{\sob}+(\varsigma_\cst{de}^\sob+\|\b\eta\|_W^\sob)^\frac{\sob-1}{\sob}\right) 
    \\
    &\leq 2^\frac{1}{\sob}C_\cst{hc}(2\varsigma_\cst{de}^\sob+\|\b\tau\|_W^\sob+\|\b\eta\|_W^\sob)^\frac{\sob-1}{\sob} 
    \\
    & =  2^\frac{1}{\sob} C_\cst{hc}(2\varsigma_\cst{de}^\sob+\|\b\tau\|_W^\sob+\|\b\eta\|_W^\sob)^\frac{\sob-2}{\sob}
    (2\varsigma_\cst{de}^\sob+\|\b\tau\|_W^\sob+\|\b\eta\|_W^\sob)^\frac1\sob,
    \\
    & \leq  2^\frac{1}{\sob} C_\cst{hc}(\varsigma_\cst{de}^\sob+\|\b\tau\|_W^\sob+\|\b\eta\|_W^\sob)^\frac{\sob-2}{\sob}
    (2\varsigma_\cst{de}+\|\b\tau\|_W+\|\b\eta\|_W),
  \end{aligned}
  \]
  where the last line follows from the fact that $t \mapsto t^{\sob-2}$ is decreasing and again \eqref{eq:sum-power}. 
  If $2\varsigma_\cst{de}+\|\b\tau\|_W+\|\b\eta\|_W\le 2^{2-r}\|\b\tau-\b\eta\|_W$, from the previous bound we directly get the conclusion, i.e. \eqref{eq:od:power-framed:holder.continuity} with $\sigma_\cst{hc}=2^{2-r+\frac{1}{r}}C_\cst{hc}$.
  Otherwise, using \eqref{eq:sum-power} and a triangle inequality yields
  \begin{equation}\label{eq:est_else}
  \begin{aligned}
    (\varsigma_\cst{de}^\sob+\|\b\tau\|_W^\sob)^\frac{1}{\sob}(\varsigma_\cst{de}^\sob+\|\b\eta\|_W^\sob)^\frac{1}{\sob} 
    &\ge 2^{-\frac{2}{r'}}(\varsigma_\cst{de}+\|\b\tau\|_W)(\varsigma_\cst{de}+\|\b\eta\|_W) \\
    &= 2^{-2(\frac{1}{r'}+1)}\left[
      \left(2\varsigma_\cst{de}+\|\b\tau\|_W+\|\b\eta\|_W\right)^2-\left(\|\b\tau\|_W-\|\b\eta\|_W\right)^2
      \right]
    \\
    &\ge 2^{-2(\frac{1}{r'}+1)}\left[
      \left(2\varsigma_\cst{de}+\|\b\tau\|_W+\|\b\eta\|_W\right)^2-\|\b\tau-\b\eta\|_W^2
      \right]
    \\
    &\ge 2^{-2(\frac{1}{r'}+1)}(1-4^{r-2})\left(2\varsigma_\cst{de}+\|\b\tau\|_W+\|\b\eta\|_W\right)^2\\
    &\ge 2^{\frac{2}{(r-2)r}-2}\left(\varsigma_\cst{de}^\sob+\|\b\tau\|_W^\sob+\|\b\eta\|_W^\sob\right)^\frac{2}{\sob},
    \end{aligned}
  \end{equation}
where we concluded with \eqref{eq:sum-power} together with the fact that $2^{-2(\frac{1}{r'}+1)}\left(1-4^{r-2}\right) \ge 2^{\frac{2}{(r-2)r}-2}$.
Finally, raising both sides of \eqref{eq:est_else} to the power $\sob-2$, we get a relation analogous to \eqref{eq:1d.power-framed:bound:4}. Hence, proceeding as in \eqref{eq:1d.power-framed:bound:5}, we infer \eqref{eq:od:power-framed:holder.continuity}.
\end{proof}

\begin{corollary}[Carreau--Yasuda]\label{cor:Carreau--Yasuda}
  The strain rate-shear stress law of the $(\mu,\delta,a,\sob)$-Carreau--Yasuda fluid defined in Example \ref{ex:Carreau--Yasuda} is an $\sob$-power-framed function.
\end{corollary}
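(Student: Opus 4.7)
The plan is to apply Theorem \ref{thm:1d.power-framed} with $U = \Omega$, $n = d$, and $(W,(\cdot,\cdot)_W) = (\Ms{d}, :)$ endowed with the Frobenius inner product. Comparing \eqref{eq:Carreau--Yasuda} with \eqref{eq:1d.power-framed:stress}, the natural choice is the Carath\'eodory function
\[
\varsigma(\b x,\alpha) \coloneq \mu(\b x)\left(\delta^{a(\b x)}+\alpha^{a(\b x)}\right)^{\frac{\sob-2}{a(\b x)}},
\qquad (\b x,\alpha)\in\Omega\times[0,+\infty),
\]
so that $\stress(\b x,\b\tau) = \varsigma(\b x,|\b\tau|_{d\times d})\b\tau$. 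The only nontrivial point is then to verify the two-sided bound \eqref{eq:1d.power-framed:eta}; the conclusion follows directly from the theorem.

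First I would compute the derivative $\partial_\alpha(\alpha\varsigma(\b x,\alpha))$ explicitly. Dropping the $\b x$-dependence and setting $\Sigma_a(\alpha)\coloneq\delta^a+\alpha^a$, a short calculation yields
\[
\frac{\partial(\alpha\varsigma(\b x,\alpha))}{\partial \alpha}
= \mu\,\Sigma_a(\alpha)^{\frac{\sob-2-a}{a}}\bigl[\delta^a+(\sob-1)\alpha^a\bigr].
\]
The task is then to control this quantity from above and below by $(\delta^\sob+\alpha^\sob)^{\frac{\sob-2}{\sob}}$, up to multiplicative constants depending only on $\mu_\pm$, $a_\pm$ and $\sob$.

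The central ingredient is a careful comparison between the $\ell^{a}$- and $\ell^{\sob}$-type norms of the pair $(\delta,\alpha)$. Using \eqref{eq:sum-power} with $n = 2$ applied to $a_1 = \delta^{\min(a,\sob)}$, $a_2 = \alpha^{\min(a,\sob)}$ (and the exponent $\max(a,\sob)/\min(a,\sob)$), one obtains two-sided estimates of the form
\[
c\,(\delta^\sob+\alpha^\sob)^{1/\sob} \le (\delta^a+\alpha^a)^{1/a} \le C\,(\delta^\sob+\alpha^\sob)^{1/\sob},
\]
with $c,C$ expressed as powers of $2$ with exponents $\left(\tfrac{1}{a}-\tfrac{1}{\sob}\right)^\oplus$ and $-\left(\tfrac{1}{a}-\tfrac{1}{\sob}\right)^\ominus$ (this is exactly where the positive- and negative-part exponents announced in Example \ref{ex:Carreau--Yasuda} appear). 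Raising these to the appropriate (positive or negative) power of $\sob-2$ --- and distinguishing the cases $\sob<2$ and $\sob\ge 2$ so as to keep the correct direction of the inequalities --- gives comparison relations between $\Sigma_a(\alpha)^{\frac{\sob-2-a}{a}}$ and $(\delta^\sob+\alpha^\sob)^{\frac{\sob-2}{\sob}}$.

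The last step is to bound the bracket $\delta^a+(\sob-1)\alpha^a$ between $\min(1,\sob-1)\Sigma_a(\alpha)$ and $\max(1,\sob-1)\Sigma_a(\alpha)$, multiply with the previous comparison, and substitute $\mu_-\le\mu\le\mu_+$. Combining these pieces yields \eqref{eq:1d.power-framed:eta} with explicit constants $\varsigma_\cst{de}=\delta$, and $\varsigma_\cst{sm},\varsigma_\cst{hc}$ of the form announced in Example \ref{ex:Carreau--Yasuda}. The main obstacle is bookkeeping: the sign of $\sob-2$ and the sign of $\tfrac{1}{a}-\tfrac{1}{\sob}$ must be tracked simultaneously so that each intermediate inequality points in the right direction; an exhaustive case split on $\sob\lessgtr 2$ (and, within each case, on $a\lessgtr\sob$ via the $(\cdot)^\oplus$/$(\cdot)^\ominus$ notation) is the cleanest route. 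Once \eqref{eq:1d.power-framed:eta} is established, Theorem \ref{thm:1d.power-framed} delivers \eqref{eq:od:power-framed:holder.continuity.strong.monotonicity}, which specialised to the Frobenius setting is precisely what it means for $\stress$ to be an $\sob$-power-framed function.
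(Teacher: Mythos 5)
Your proposal is correct and follows essentially the same route as the paper: identify $\varsigma(\b x,\alpha)=\mu(\b x)(\delta^{a(\b x)}+\alpha^{a(\b x)})^{(\sob-2)/a(\b x)}$, compute $\partial_\alpha(\alpha\varsigma(\b x,\alpha))=\mu(\b x)(\delta^{a(\b x)}+\alpha^{a(\b x)})^{\frac{\sob-2}{a(\b x)}-1}(\delta^{a(\b x)}+(\sob-1)\alpha^{a(\b x)})$, sandwich the bracket between $(\sing-1)$ and $(\sob+1-\sing)$ times $\delta^{a(\b x)}+\alpha^{a(\b x)}$, convert to $(\delta^\sob+\alpha^\sob)^{(\sob-2)/\sob}$ via \eqref{eq:sum-power}, and invoke Theorem \ref{thm:1d.power-framed}. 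The only difference is that you spell out the \eqref{eq:sum-power} bookkeeping (the $\ell^a$ versus $\ell^\sob$ comparison and the sign cases) that the paper leaves implicit.
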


\begin{proof}
  Let $\b x \in \Omega$ and $g : (0,\infty) \to \R$ be such that, for all $\alpha\in(0,\infty)$,
  \[
  g(\alpha)
  \coloneqq \frac{\partial}{\partial \alpha}\left[
  \alpha\mu(\b x)\left(\delta^{a(\b x)}+\alpha^{a(\b x)}
  \right)^\frac{\sob-2}{a(\b x)}
  \right]
  = \mu(\b x)\left(
  \delta^{a(\b x)}+\alpha^{a(\b x)}
  \right)^{\frac{\sob-2}{a(\b x)}-1}\left(
  \delta^{a(\b x)}+(\sob-1)\alpha^{a(\b x)}
  \right).
  \] 
  We have for all $\alpha \in (0,\infty)$,
  \[
  \mu_- (\sobs-1) \left(
  \delta^{a(\b x)}+\alpha^{a(\b x)}
  \right)^{\frac{\sob-2}{a(\b x)}} \le g(\alpha)
  \leq \mu_+ (\sob+1-\sobs)\left(
  \delta^{a(\b x)}+\alpha^{a(\b x)}
  \right)^{\frac{\sob-2}{a(\b x)}},
  \]
  and we conclude using \eqref{eq:sum-power} together with Theorem \ref{thm:1d.power-framed}.
\end{proof}

\section*{Acknowledgments}
The work of M. Botti was funded by the European Commission through the H2020-MSCA-IF-EF project PDGeoFF (Grant no. 896616). This support is gratefully acknowledged.


\raggedright
\printbibliography

\end{document}